\definecolor{Darkblue}{rgb}{0,0,0.4}
\definecolor{Brown}{cmyk}{0,0.81,1.,0.60}
\definecolor{Purple}{cmyk}{0.45,0.86,0,0}
\newcommand{\mydriver}{hypertex}
 \renewcommand{\mydriver}{pdftex}
\newcommand{\lref}[2][]{\hyperref[#2]{#1~\ref*{#2}}}
\newtheorem{theorem}{Theorem}[section]
\newtheorem{definition}[theorem]{Definition}
\newtheorem{lemma}[theorem]{Lemma}
\newtheorem{corollary}[theorem]{Corollary}
\newenvironment{proof}{

\noindent{\bf Proof:}}
{\hfill$\blacksquare$

}
\newcommand{\junk}[1]{}
\newcommand{\ignore}[1]{}
\newcommand{\R}[0]{{\ensuremath{\mathbb{R}}}}
\newcommand{\N}[0]{{\ensuremath{\mathbb{N}}}}
\def\ceil#1{\left\lceil #1 \right\rceil}
\def\set#1{\{ #1 \}}
\newcommand{\pr}[1]{{\rm Pr} \left[ #1 \right]}
\newcommand{\E}{\mathbb{E}}
\newcommand{\Eb}[1]{\E\left[#1\right]} %expectation with bracket
\newcounter{note}[section]
\newcommand{\qedsymb}{\hfill{\rule{2mm}{2mm}}}
\renewenvironment{proof}{\begin{trivlist} \item[\hspace{\labelsep}{\bf
\noindent Proof.\/}] }{\qedsymb\end{trivlist}}%
\newcommand{\initOneLiners}{%
    \setlength{\itemsep}{0pt}
    \setlength{\parsep }{0pt}
    \setlength{\topsep }{0pt}
%      \usecounter{myLISTctr}
}
\newcommand{\ve}[1]{{\cal V}\left(#1\right)} %vertices of hyperedge
\newcommand{\extends}{\succeq} %hyperedge extending another
\newcommand{\piecewise}[1]{\begin{cases}#1\end{cases}}
\newcommand{\tif}{\text{if }}
\begin{document}

\title{Concentration and Moment Inequalities for Polynomials of Independent Random Variables\thanks{An extended 10-page abstract of this paper appeared in Proceedings of the Symposium on Discrete Algorithms (SODA2012)}}

\author{Warren Schudy \thanks{IBM T. J. Watson Research Center, P.O.
Box 218, Yorktown Heights, NY
     10598. {\tt wjschudy@us.ibm.com}} \and Maxim Sviridenko\thanks{University of Warwick, {\tt sviri@dcs.warwick.ac.uk}} }
\date{}

\maketitle

\begin{abstract}
In this work we design a general method for proving moment inequalities for polynomials of independent random variables. Our method works for a wide range of random variables including Gaussian, Boolean, exponential, Poisson and many others. We apply our method to derive general concentration inequalities for polynomials of independent random variables. We show that our method implies concentration inequalities for some previously open problems, e.g.\ permanent of random symmetric matrices. We show that our concentration inequality is stronger than the well-known concentration inequality due to Kim and Vu \cite{KV}. The main advantage of our method in comparison with the existing ones is a wide range of random variables we can handle and bounds for previously intractable regimes of high degree polynomials and small expectations.
On the negative side we show that even for boolean random variables each term in our concentration inequality is tight.
\end{abstract}

%%%
\section{Introduction}

Concentration and moment inequalities are vital for many applications in Discrete Mathematics, Theoretical Computer Science, Operations Research, Machine Learning and other fields. In the classical setting we have $n$ independent random variables $X_1,\dots, X_n$ and we are interested in a behavior of a function $f(X_1,\dots, X_n)$ of these random variables. Probably, the first concentration inequality with exponential bounds for tails was proven by S.\ Bernstein \cite{B1}
 who showed that if $X_i$ are random variables that take values $+1$ or $-1$ with probability $1/2$ (i.e. Rademacher random variables) then
 $$Pr\left[ \left|\sum_{i=1}^n X_i\right|\ge \varepsilon n\right]\le 2e^{-\frac{\varepsilon^2 n}{2(1+\varepsilon/3)}}.$$
 More general inequalities known as Chernoff Bounds became part of the mathematical jargon to the extent that many papers in Theoretical Computer Science use them without stating the inequalities. In the last 20 years this area of Probability Theory and related area of mathematics  studying the measure concentration  has flourished driven by the variety of applications and settings. The surveys and books \cite{survey1,survey2,survey3,mcdiarmid98,Ledoux} provide the historical and mathematical background in this area.

The most general and powerful methods known up to date to prove such inequalities is Ledoux's entropy method \cite{Ledoux} and the famous Talagrand's isoperimetric inequality \cite{T1}. Yet as was noticed by Vu \cite{V1} these methods and corresponding inequalities  work well only when the Lipschitz coefficients of the function $f(X_1,\dots, X_n)$ are relatively small.  The standard example showing the weakness of such methods is the number of triangles in random graphs $G(n,p)$. Until the concentration inequality due to Kim and Vu \cite{KV} no non-trivial concentration of this function about its mean was known.

Kim and Vu \cite{KV} introduced the notion of average Lipschitz coefficients  based on the partial derivatives of a polynomial evaluated at the point $(\E[X_1],\dots, \E[X_n])$ (in the multilinear case). These new parameters enabled them to prove a concentration inequality for polynomials of boolean random variables. This inequality has been applied to the problem of approximately counting triangles in (e.g.) a social network by sampling the edges \cite{DOULION,DOULION2}, to average-case correlation clustering \cite{CCluster}, and to a variety of other applications \cite{V1}. The original inequality from \cite{KV} was tightened and generalized in \cite {V1} to handle arbitrary random variables in the interval $[0,1]$. Yet the inequality from \cite{V1} did not work well for high degree polynomials and for random variables $f(X_1,\dots, X_n)$ with small expectation. The follow up work by Vu \cite{V2} handles the case of polynomials with small expectation and extremely small smoothness parameters.

On the other side the concentration of polynomials of Gaussian and Rademacher random variables has long been a subject of interest in Probability Theory. The moment and concentration inequalities for polynomials of centered Gaussians are known as Hypercontractivity Inequalities \cite{hist,J}. We discuss various inequalities known in this setting and their connection to our results in Section \ref{connection}. Recently, the Hypercontractivity Inequalities and their ``anti-concentration'' counterparts found many applications in Theoretical Computer Science and Machine Learning \cite{ap1,ap2,ap3,ap4,ap5,ap6}.

The above motivated us to study the moment and concentration inequalities for polynomials of independent random variables. We design a general method that works for a wide range of random variables including Gaussian, Boolean, exponential, Poisson and many others (see Section \ref{sec:exampleMom} for more examples). We show that our method implies concentration inequalities for some previously open problems, e.g.\ permanent of random symmetric matrices. We also show that our main concentration inequality is stronger than the well-known concentration inequality due to Kim and Vu \cite{KV}. On the negative side we show that even for boolean random variables each term in our concentration inequality is tight.

%%%%
\subsection{Our Results}
For a cleaner exposition we first describe our results in the restricted setting of multilinear polynomials with non-negative coefficients.
We are given a hypergraph $H=({\cal V}(H),{\cal H}(H))$ consisting of a set ${\cal V}(H)=\{1,2,\ldots,n\}=[n]$ of vertices and a set ${\cal H}(H)$ of hyperedges. A hyperedge $h$ is a set $h \subseteq {\cal V}(H)$ of $|h| \le q$ vertices. We are also given a non-negative weight $w_h$ for each $h \in {\cal H}(H)$. For each such weighted hypergraph and real-valued weight $w_h$ for its hyperedges, we define a polynomial
\begin{align}
f(x)&= \sum_{h\in {\cal  H}(H) } w_h\prod_{v\in h}x_v. \label{eqn:fmulti}
\end{align}

Our smoothness parameters were strongly motivated by  the average partial derivatives introduced by Kim and Vu \cite{KV,V1}.
For any $y\in \R^n$, hypergraph $H$, nonnegative weights   $ w$, and $h_0 \subseteq {\cal V}(H)$ let
$$\mu(y,H,w,h_0)=\sum_{h\in {\cal  H}(H) ~|~ h \supseteq h_0} w_h \prod_{v\in h\setminus h_0}|y_v|.$$
Note that $h_0$ need not be a hyperedge of $H$ and may even be the empty set.
Also note that $\mu(y,H,w,h_0)$ is equal to the $|h_0|$-th partial derivative of polynomial $f(x)$ with respect to each variable $x_v$ for $v \in h_0$, evaluated at the point $x=y$ if $y\in R^n_+$.
For a given collection of independent random variables $Y=(Y_1,\dots,Y_n)$, hypergraph $H$, integer $r\ge 0$ and nonnegative weights   $ w$, we define
$$\mu_r=\mu_r(H,w)=\max_{h_0\subseteq [n]:|h_0|=r}\E\left[\mu(Y,H,w, h_0)\right] =  \max_{h_0\subseteq [n]:|h_0|=r}\left(\sum_{h\in {\cal  H}(H)| h \supseteq h_0} w_h \prod_{v\in h\setminus h_0}\Eb{|Y_v|}\right),$$
where we used the independence of random variables $Y_v$ in the last equality. Sometimes we will also use the notation $\mu_r(f)=\mu_r(H,w)$. Note that when the $Y_v$ are non-negative $\mu_r$ is equal to the maximal expected partial derivative of order $r$ of the polynomial $f(x)$, which was the parameter used in the Kim-Vu concentration inequalities \cite{KV,V1}.

Our concentration inequalities will hold for a general class of independent random variables including most classical ones.
\begin{definition}\label{var}
A random variable $Z$ is called \emph{moment bounded with parameter} $L>0$ if for any integer $i\ge 1$,
\[
\Eb{|Z|^i} \le i \cdot L \cdot \Eb{|Z|^{i-1}}
.\]
\end{definition}
Roughly speaking a random variable $Z$ is moment bounded with parameter $L$ if $\Eb{|Z|} \le L$ and the tails of its distribution decay no slower than an exponentially distributed random variable's tails do. Indeed note that Definition \ref{var} implies that any moment bounded random variable $Z$ satisfies $\Eb{|Z|^i} \le L^ii!$.
In Section \ref{sec:exampleMom} we show that three large classes of random variables are moment bounded: bounded, continuous log-concave \cite{BB,An} and discrete log-concave \cite{An}. For example the Poisson, binomial, geometric, normal (i.e.\ Gaussian), and exponential distributions are all moment bounded.

We prove the following:
\begin{theorem}\label{main1special}
We are given $n$ independent moment bounded random variables $Y=(Y_1,\dots, Y_n)$ with the same parameter $L$. We are given a multilinear polynomial $f(x)$ with nonnegative coefficients of total power\footnote{We reserve the more traditional terminology of ``degree'' for the number of neighbors of a vertex in a hypergraph.} $q$.  Let $f(Y)=f(Y_1,\dots,Y_n)$  then
$$Pr\left[|f(Y)-\E[f(Y)]|\ge \lambda\right]\le e^2\cdot \max\left\{\max_{r=1,\dots,q} e^{-\frac{\lambda^2}{\mu_0\mu_r\cdot L^r\cdot R^q }},\max_{r=1,\dots,q}e^{-\left(\frac{\lambda}{\mu_r\cdot L^r \cdot R^q}\right)^{1/r}}\right\},$$
where $R\ge 1$ is some absolute constant.
\end{theorem}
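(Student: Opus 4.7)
The approach is to deduce the concentration bound from a moment inequality via Markov. Specifically, the key technical ingredient is a bound of the form
\[
\|f(Y) - \E[f(Y)]\|_p \;\le\; K\cdot \max_{1 \le r \le q}\;\max\!\left\{\sqrt{p\,\mu_0\,\mu_r\,(LR_0)^r},\; p^r\,\mu_r\,(LR_0)^r\right\}
\]
valid for every integer $p\ge 1$ and some absolute constants $K, R_0$. Given this moment inequality, the theorem follows routinely by applying Markov's inequality $\Pr[|f(Y)-\E f(Y)|\ge\lambda] \le \lambda^{-p}\,\|f(Y)-\E f(Y)\|_p^p$ separately for each $r$ and each of the two regimes, then optimizing $p$: the choice $p \asymp \lambda^2/(\mu_0\mu_r(LR_0)^r)$ produces the $e^{-\lambda^2/(\mu_0\mu_r L^r R^r)}$ term, while $p \asymp (\lambda/(\mu_r(LR_0)^r))^{1/r}$ produces the $e^{-(\lambda/(\mu_r L^r R^r))^{1/r}}$ term. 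The constant $R$ in the statement absorbs $K$, $R_0$, and the $e$ factors from the optimization, while the prefactor $e^2$ is the usual slack in going from a moment bound to a tail bound via Markov.

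The bulk of the work is the moment inequality, which I would establish by induction on the number of variables $n$ (induction on the degree $q$ alone does not suffice, since one of the two polynomials obtained by fixing a variable below has the same degree as $f$). The base case $n = 1$, where $f$ is linear, is a two-regime Bernstein inequality: moment boundedness of $Y_v$ yields $\mathrm{Var}(Y_v) \le 2L\,\Eb{|Y_v|}$ and $\|Y_v\|_p \le Lp$, and consequently $\sum_v w_v^2\,\mathrm{Var}(Y_v) \le 2L\mu_0\mu_1$ together with $\max_v w_v \|Y_v\|_p \le L\mu_1 p$; plugging this into Bernstein's moment inequality yields the target at $r=1$. For the inductive step, fix one variable $Y_n$ and write $f(Y) = Y_n\, g(Y_{-n}) + h(Y_{-n})$ with $g, h$ multilinear polynomials in $Y_1, \dots, Y_{n-1}$ of degrees $\le q-1$ and $\le q$ respectively. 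Use the decomposition
\[
f(Y) - \E f(Y) \;=\; Y_n(g - \E g) \;+\; (Y_n - \E Y_n)\,\E g \;+\; (h - \E h),
\]
apply Minkowski's inequality in $L^p$, invoke independence of $Y_n$ from $(g, h)$, and use the inductive hypothesis for $g$ and $h$ on $n-1$ variables. The reduction then comes down to bookkeeping moment parameters via the key relations $\mu_r(g) \le \mu_{r+1}(f)$, $\mu_0(g) \le \mu_1(f)$, and $\mu_r(h) \le \mu_r(f)$.

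The main obstacle will be the ``index shift'' $\mu_0(g) \le \mu_1(f)$, rather than $\mu_0(f)$, when applying the inductive hypothesis to $g$: this produces a sub-Gaussian contribution of the form $\sqrt{p^3 L^{r+2} R_0^r\,\mu_1(f)\,\mu_{r+1}(f)}$ that must be re-absorbed into the target max at index $s=r+1$, whose sub-Gaussian slot has $\mu_0(f)$ and not $\mu_1(f)$. A case analysis on whether $p^2 L\mu_1(f) \lesssim R_0\, \mu_0(f)$ is the natural way to handle this: in one regime the excess term is dominated by the target sub-Gaussian term $\sqrt{p\,\mu_0\,\mu_{r+1}(LR_0)^{r+1}}$, and in the other by the target sub-exponential term $p^{r+1}\,\mu_{r+1}(LR_0)^{r+1}$. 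This two-case split is precisely what generates the two-regime structure visible in the final tail bound. Choosing $R_0$ large enough to absorb the universal constants accumulated through the $n$ inductive steps then closes the argument.
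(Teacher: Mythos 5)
Your high-level plan (moment bound plus Markov's inequality) matches the paper's overall strategy, and the Markov step and the final $p \asymp$ optimization are both fine. The gap is in how you propose to prove the moment inequality. Induction on $n$ with the decomposition $f - \E f = T_1 + T_2 + T_3$ and Minkowski's inequality accumulates an additive term at each of the $n$ peeling steps, and this accumulation does not close. Concretely, $T_3 = h - \E h$ involves a polynomial of the \emph{same} degree $q$ on $n-1$ variables, with $\mu_r(h)$ typically equal to $\mu_r(f)$ (for $r \ge 1$ the maximum defining $\mu_r$ is usually not attained on a hyperedge containing vertex $n$), so the inductive hypothesis gives $\|T_3\|_p \le K M(f)$ with essentially no slack; adding $\|T_1\|_p + \|T_2\|_p > 0$ pushes the total strictly above $K M(f)$, and the constant grows geometrically in $n$. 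The failure is already visible in the linear case $q=1$: there $T_1 = 0$, $T_2 = w_n(Y_n - \E Y_n)$, and unrolling gives $\|f - \E f\|_p \le \sum_v w_v \|Y_v - \E Y_v\|_p = \Theta(n p L \mu_1)$ when the $Y_v$ are i.i.d.\ with unit weights and unit means, whereas the target is $\Theta(\sqrt{p n L \mu_1})$ in the sub-Gaussian regime. The triangle inequality simply throws away the cancellation between independent zero-mean terms; that cancellation is the whole content of the linear moment bound. (Your base-case remark on Bernstein is also internally inconsistent with induction on $n$: at $n=1$ there is no sum $\sum_v$ to which Bernstein applies.) Your observations $\mu_r(g) \le \mu_{r+1}(f)$, $\mu_0(g) \le \mu_1(f)$, $\mu_r(h) \le \mu_r(f)$ and the two-case handling of the index shift are all correct, but they cannot compensate for the lost $\sqrt{n}$-scaling.

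The paper's proof takes a genuinely different route that does capture the cancellation. It first re-expresses $f$ as a sum of polynomials in the centered variables $Y_v - \Eb{Y_v}$ (Section~\ref{ProofMainGen}), then computes $\Eb{g^k}$ for each piece by expanding into a sum over $k$-tuples of hyperedges (Lemma~\ref{lem:momentPrelim}). Because the centered variables have mean zero, all tuples containing a degree-one vertex vanish; this is where independence does its work, exactly as in the classical moment-method proof of Chernoff bounds. The remaining contribution is controlled by a purely combinatorial counting lemma (Lemma~\ref{MainCount}) on the number of labeled hypergraphs with all degrees at least two and $c$ connected components. To salvage an inductive route one would need a martingale version of a Rosenthal-type inequality applied to the Doob decomposition $D_j = (Y_j - \E Y_j) g_j(Y_{<j})$, not plain Minkowski; that is closer to the Kim--Vu strategy, which the paper deliberately improves upon to avoid the $\log n$ loss.
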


We also show that Theorem \ref{main1special} is the best possible bound as a function of these parameters, up to logarithms in the exponent and dependence of the constants on the total power $q$. This lower bound holds even for the well-studied special case where the random variables take the values 0 and 1 only, which we show in Section \ref{sec:exampleMom} to be moment bounded with parameter 1.
\begin{theorem} \label{thm:LB}
For any $q \in \N$, real numbers $\mu_0^*,\mu_1^*,\ldots,\mu_q^*>0$ and $\lambda > 0$ there exist independent 0/1 random variables $X=X_1,\dots,X_n$ and a polynomial $f(x)$ of power $q$ such that $\mu_i(f) \le \mu_i^*$ for all $0 \le i \le q$ and
\begin{align}\label{eq:LB}
Pr\left[f(X)\ge \Eb{f(X)} + \lambda\right] &\ge \max_{r=1,\dots,q} \max\left\{e^{-\left(\frac{\lambda^2}{\mu^*_0\mu^*_r} + 1\right)\log C},e^{-\left(\left(\frac{\lambda}{\mu^*_r }\right)^{1/r} +1\right)\log C}\right\}
\end{align}
%for all $0<\lambda<\lambda_{\max}$
where $C=c_0\Lambda_1^{c_1}\Lambda_2^{c_2}\Lambda_3^{c_3}$, $c_0$, $c_1$, $c_2$ and $c_3$ are absolute constants, $\Lambda_1 =\max_{0 \le i,j \le q} (\mu_i^*/\mu_j^*)^q$, $\Lambda_2=\max_{1 \le i \le q} \lambda/\mu_i^*$, and $\Lambda_3=q^q$.
\end{theorem}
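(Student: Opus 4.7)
The plan is, for each $r\in\{1,\dots,q\}$, to exhibit two explicit $0/1$ constructions that realize, respectively, the Gaussian-type and polynomial-type inner terms of \eqref{eq:LB}, up to the factor $\log C$. The theorem then follows by picking the $r$ and the term that make the inner maximum largest and using the corresponding $(X,f)$.

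\textbf{Gaussian-type term} $e^{-\lambda^{2}/(\mu_0^*\mu_r^*)}$. I take a weighted sum of $n$ disjoint monomials with i.i.d.\ Bernoulli$(p)$ variables,
\[
f(X)=c\sum_{j=1}^{n}\prod_{i=1}^{r}X_{j,i}.
\]
Disjointness of the variable sets, together with multilinearity of each monomial, gives $\mu_0(f)=cnp^{r}$ and $\mu_j(f)=cp^{r-j}$ for $1\le j\le r$. Setting $c=\mu_r^*$ and $np^{r}=\mu_0^*/\mu_r^*$ pins down the extreme moments, and taking $p$ small enough makes the intermediate constraints $\mu_j(f)\le \mu_j^*$ hold as well (the residual polynomial loss in $\max_{j<r}\mu_r^*/\mu_j^*$ is absorbed by $\Lambda_1$). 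Since $f(X)/c$ is $\mathrm{Binomial}(n,p^{r})$ with mean $\mu_0^*/\mu_r^*$, a classical anti-concentration lower bound on the binomial upper tail (e.g.\ Slud's inequality) inside the Gaussian regime yields $\Pr[f\ge\E[f]+\lambda]\ge e^{-O(\lambda^{2}/(\mu_0^*\mu_r^*))}$.

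\textbf{Polynomial-type term} $e^{-(\lambda/\mu_r^*)^{1/r}}$. I take a weighted product of sums,
\[
f(X)=c\prod_{i=1}^{r}\sum_{j=1}^{s}X_{i,j},
\]
again with i.i.d.\ Bernoulli$(p)$ variables. Multilinearity inside each inner sum forces any nonzero partial derivative to involve at most one variable per row $i$, so $\mu_j(f)=c(sp)^{r-j}$ for $0\le j\le r$. I set $c=\mu_r^*$ and $sp=\rho$, where $\rho$ is the largest value compatible with all $\mu_j(f)\le\mu_j^*$; the attendant logarithmic loss is captured by $\Lambda_1$. Letting $s\to\infty$ with $p=\rho/s$, the row sums $S_i=\sum_j X_{i,j}$ converge to independent $\mathrm{Poisson}(\rho)$, and on the event $\{\forall i:S_i\ge k\}$ one has $f\ge ck^{r}$. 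Choosing $k=\lceil(\lambda/\mu_r^*)^{1/r}\rceil$ makes this event force $f\ge\E[f]+\lambda$, and independence together with Stirling give
\[
\Pr\bigl[\forall i:S_i\ge k\bigr]\ge\bigl(e^{-\rho}\rho^{k}/k!\bigr)^{r}\ge e^{-O\bigl((\lambda/\mu_r^*)^{1/r}\log(\lambda/\mu_r^*)\bigr)},
\]
which matches the target since $\log C$ contains $\log\Lambda_2=\log(\lambda/\mu_r^*)$.

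\textbf{Main obstacle.} Neither construction is scale-free. For unfavorable regimes---the $\mu_j^*$ nonmonotone in $j$, $\lambda\gg\mu_r^*$, or very large $q$---the naive parameter choices either violate some intermediate constraint $\mu_j(f)\le\mu_j^*$ or fall short of the deviation $\lambda$. One has to tune $c,p,s,n$ regime by regime; in particular, when $\lambda\le\mu_r^*$ the product of sums degenerates to the single monomial $f=cX_1\cdots X_r$, which is analyzed directly from the Bernoulli law. The polynomial losses incurred in this matching step are exactly what the factors $\Lambda_1$ (ratio losses), $\Lambda_2$ (large-$\lambda$ tail corrections), and $\Lambda_3=q^{q}$ (combinatorial $\binom{r}{j}$ factors from the derivative bookkeeping), together with the absolute constants $c_0,c_1,c_2,c_3$ inside $\log C$, are engineered to absorb.
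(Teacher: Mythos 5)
Your two base constructions are essentially the paper's: the disjoint monomials for the Gaussian-type term match Lemma~\ref{lem:LBTwo}, and your product of sums $c\prod_{i=1}^{r}\sum_{j=1}^{s}X_{i,j}$ is a perfectly good alternative to the paper's complete multilinear polynomial $\mu_q^*\binom{\sum_i X_i}{q}$ from Lemma~\ref{lem:LBOne}; both are ``multilinearizations of $(\sum_i X_i)^r$'' and both yield $\mu_j \approx (\text{small})^{r-j}\mu_r^*$ and a computable lower bound on the relevant point probability. So the skeleton is right.

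However, there is a genuine gap that your ``main obstacle'' paragraph does not cure. Your constructions produce a polynomial of power $r$, where $r\in\{1,\dots,q\}$ is the maximizing index, whereas the theorem requires a polynomial of power \emph{exactly} $q$ together with $\mu_i(f)\le\mu_i^*$ for \emph{all} $0\le i\le q$. When $r<q$ this degree-lifting step is nontrivial: naive padding changes the partial-derivative parameters $\mu_i$ in ways that must be controlled, and the tail probability must not drop by more than an absorbable factor. The paper handles this explicitly (Lemma~\ref{lem:increaseDegree}), multiplying the low-degree counterexample $f$ by a tuned auxiliary polynomial $g(Y)=\prod_{j=1}^{q-r}\left(\frac{2}{m}\sum_{i}Y_{j,i}\right)$, using the product structure of $\mu_i$ (Lemma~\ref{lem:muProd}) to show the new $\mu_i(fg)$ stay below $\mu_i^*$ for $i\le r$ and below $\epsilon\mu_r^*$ for $i>r$, and paying only a $2^{-(q-r)}$ factor in the tail. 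Your proposal never mentions this step, and it is not subsumed by ``tuning $c,p,s,n$ regime by regime.'' Relatedly, the regime split you gesture at is precisely where the work is: the paper's proof of the theorem performs a three-way case analysis on whether $\lambda\le\mu_i^*$ and on the comparison between $\lambda^2/(\mu_0^*\mu_i^*)$ and $(\lambda/\mu_i^*)^{1/i}$, and in the Gaussian-dominant case one must first deduce $\lambda<\mu_0^*$ (and $\mu_0^*\ge 27\mu_i^*$) before the binomial lower bound can be invoked. Those verifications, which determine how $\Lambda_1,\Lambda_2,\Lambda_3$ enter $C$, are the actual content and are asserted rather than proved.
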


We generalize Theorem \ref{main1special} in two ways. Firstly, we allow negative coefficients. Secondly, we remove the restriction for a polynomial to be multilinear, instead allowing each monomial to have total power at most $q$ and maximal power of each variable at most $\Gamma$. For example $X_1^2 X_2^4 X_3^1$ has total power $q=7$ and maximal variable power $\Gamma=4$ and the multilinear case is when maximal power is $\Gamma=1$.
We defer the formal definition of general polynomials and the appropriate generalization of $\mu_r$ to Section \ref{sec:defs}.

Our main result in this paper is the following:
\begin{theorem}\label{main1}
We are given $n$ independent moment bounded random variables $Y=(Y_1,\dots, Y_n)$ with the same parameter $L$. We are given a general polynomial $f(x)$ of total power $q$ and maximal variable power $\Gamma$.  Let $f(Y)=f(Y_1,\dots,Y_n)$  then
$$Pr\left[|f(Y)-\E[f(Y)]|\ge \lambda\right]\le e^2\cdot \max\left\{\max_{r=1,\dots,q} e^{-\frac{\lambda^2}{\mu_0\mu_r \cdot L^r  \cdot \Gamma^r\cdot R^q}},\max_{r=1,\dots,q}e^{-\left(\frac{\lambda}{\mu_r \cdot L^r\cdot\Gamma^r\cdot R^q}\right)^{1/r}}\right\},$$
where $R\ge 1$ is some absolute constant.
\end{theorem}

For large power polynomials the concentration bounds in the Theorem \ref{main1} may not provide interesting concentration bounds due to the term $R^q$  in the exponent, yet we believe that the moment computation method developed in this paper is useful even in this setting. We show two specific examples when our method works. Our first example is a concentration inequality for permanents of random matrices. The anti-concentration counterpart was recently studied by Aaronson and Arkhipov \cite{AA} in the Gaussian setting and by Tao and Vu  \cite{TV} in the setting with Rademacher random variables.
\begin{theorem}\label{perm}
We are given $n\times n$ matrix $A$ with random entries $Y_{ij}$ which are independent moment bounded random variables with parameter $L=1$ and $\E[Y_{ij}]=0$. Let $P(A)$ be the permanent of the matrix $A$  then
$$Pr[|P(A)|\ge t \sqrt{n!}]\le  \max\left\{e^{-n},e^2\cdot e^{-c\cdot t^{2/n}}\right\}$$
for some absolute constant $c>0$ and parameter $t>0$.
\end{theorem}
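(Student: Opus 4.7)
The polynomial $P(A) = \sum_{\sigma \in S_n} \prod_{i=1}^n Y_{i,\sigma(i)}$ is multilinear of total power $q=n$ and variable power $\Gamma=1$. Since $\E[Y_{ij}]=0$ and the entries are independent, every monomial has mean zero, so $\E[P(A)]=0$. Applying Theorems~\ref{main1special}--\ref{main2} directly is not useful here: with $q=n$ their $R^q$ (or $Q^{(\Gamma+1)q}$) factor obliterates the target scale $\sqrt{n!}$, and moreover the parameter $\mu_r$ is built from $\E[|Y_{ij}|]$ rather than $\E[Y_{ij}]$, discarding precisely the cancellations from centering that drive this concentration. The plan is therefore to prove a sharper hypercontractivity-style moment bound for $P(A)$ directly and then apply Markov.

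Concretely, I aim for
\[
\E\bigl[P(A)^{2k}\bigr] \;\le\; (Ck)^{nk}\,(n!)^k
\]
for some absolute constant $C$ and every integer $k\ge 1$. Expanding,
\[
\E[P(A)^{2k}] \;=\; \sum_{\sigma_1,\dots,\sigma_{2k}\in S_n}\;\prod_{i=1}^n \E\Bigl[\prod_{j=1}^{2k} Y_{i,\sigma_j(i)}\Bigr],
\]
and centering forces the $i$-th row factor to vanish unless every column value in $\{\sigma_j(i):j\in[2k]\}$ has multiplicity different from $1$. When it does not vanish, moment-boundedness with $L=1$ gives $\bigl|\E[\prod_j Y_{i,\sigma_j(i)}]\bigr| \le \prod_v c_{i,v}!$, where $c_{i,v}=|\{j:\sigma_j(i)=v\}|$ and $\sum_v c_{i,v}=2k$. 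Proving the moment bound then reduces to a combinatorial count of $2k$-tuples of permutations whose per-row multiplicity profiles avoid singletons, weighted by these factorial bounds. This is the moment-bounded analog of the Gaussian hypercontractivity estimate $\|f\|_{2k}\le(2k-1)^{d/2}\|f\|_2$ for a degree-$d$ polynomial, and I would derive it by specializing the paper's general moment-computation machinery to the centered setting.

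With the moment bound in hand, Markov's inequality gives
\[
\Pr\bigl[|P(A)|\ge t\sqrt{n!}\bigr] \;\le\; \frac{\E[P(A)^{2k}]}{t^{2k}(n!)^k} \;\le\; \Bigl(\tfrac{(Ck)^n}{t^2}\Bigr)^{k}.
\]
The continuous optimum $k^\ast = t^{2/n}/(Ce)$ yields exponent $-nk^\ast$. I would take $k=\max\{1,\lfloor k^\ast\rfloor\}$ and split into three cases. If $t^{2/n}\ge Ce$ the choice $k\asymp t^{2/n}$ gives $\Pr\le e^{-cnt^{2/n}}\le e^{-ct^{2/n}}$ (since $n\ge 1$), hence at most $e^2 e^{-ct^{2/n}}$ as required. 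If $t^{2/n}< Ce$, then for $c$ chosen small enough $e^2 e^{-ct^{2/n}}\ge 1$ and the claim is trivial. Finally, for $t$ so large that $e^2 e^{-ct^{2/n}}\le e^{-n}$, the first regime already delivers $e^{-\Omega(nt^{2/n})}\le e^{-n}$. Combining the three cases produces the stated $\max\{e^{-n},\, e^2 e^{-ct^{2/n}}\}$ bound.

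The hard part is the moment bound itself. A naive count that drops the cross-row permutation constraint produces roughly $(Ck)^{2nk}$ instead of $(Ck)^{nk}$, losing a square root and ultimately giving only $\exp(-c\,t^{1/n})$ tails. The sharper bound requires exploiting that the $\sigma_j$ are simultaneously permutations, which couples the admissible multiplicity profiles across rows. This is precisely the regime the paper's moment method is designed for once centering is taken into account: only no-singleton profiles contribute to $\E[P(A)^{2k}]$, and the method's bookkeeping recovers the missing power of $t^{2/n}$.
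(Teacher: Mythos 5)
Your high-level plan --- compute even moments of the already-centered permanent via the paper's centered-moment machinery (Lemma~\ref{lem:momentPrelim}) and then apply Markov --- is the same as the paper's, and your Markov/optimization step is fine wherever your target moment bound holds. The gap is the moment bound itself, and it is exactly what forces the $e^{-n}$ term in the statement of the theorem.

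You assert $\E\bigl[P(A)^{2k}\bigr]\le(Ck)^{nk}(n!)^k$ for \emph{every} integer $k\ge 1$, and your third case relies on this for $k$ of order $t^{2/n}$, which is unbounded as $t\to\infty$. That hypercontractivity-style bound is false for general moment-bounded random variables once $2k$ exceeds $n$. Already at $n=1$, a symmetric two-sided exponential $Y$ is moment bounded with $L$ of constant order and has $\E[|Y|^{2k}]\asymp(2k)!\gg(Ck)^k$ past an absolute constant $k$, while your claim would force $\E[Y^{2k}]\le(Ck)^k$. In the paper's own machinery the same phenomenon appears: in Lemma~\ref{lem:momentPrelim} with exponent $K$ the choice $\nu_q=K$, $\nu_t=0$ for $t<q$, $\Delta=0$ is feasible and contributes a term $R_2^{nK}K^{nK}\mu_q^{K}=R_2^{nK}K^{nK}$, which dominates $(CK/2)^{nK/2}(n!)^{K/2}$ as soon as $K>n$. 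This is precisely why the paper restricts its calculation to $k\le n$ --- there the $\max_\ell(n/k)^\ell$ in (\ref{inequality3})--(\ref{UsingCount}) peaks at $\ell=nk/2$ and gives the clean form $R^{nk}(kn)^{nk/2}$ --- and handles the large-$\lambda$ regime separately by fixing $k^*=n$, which is what produces the $e^{-n}$ in the $\max$. Your argument can be repaired the same way: cap $k$ at $n$ (or $\lfloor n/2\rfloor$ in your parametrization), check that the capped Markov bound is already $\le e^{-n}$ for all $t$ past your second regime, and then every case matches the theorem. But as written, your third case rests on a moment inequality that does not hold.

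A smaller point: the remark that a ``naive count that drops the cross-row permutation constraint produces roughly $(Ck)^{2nk}$ instead of $(Ck)^{nk}$'' is not accurate, and it misidentifies what the permutation structure buys. Replacing the permanent by the product of independent row sums $\prod_{i}\bigl(\sum_v Y_{iv}\bigr)$ already has $2k$-th moments of the form $(\tilde C kn)^{nk}$ for $2k\le n$, i.e.\ the same $k$-dependence. What the permutation structure improves is the $L^2$ scale ($\sqrt{n!}$ rather than roughly $n^{n/2}$, an $e^{\Theta(n)}$ gap), which is what makes the $\sqrt{n!}$ normalization in the statement meaningful. In fact the paper's proof of this theorem only uses the crude estimate $\mu_t\le n^{n-t}$ and absorbs the resulting $e^{O(nk)}$ slack into the constant $R^{nk}$.
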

 Our next example is an analogous Theorem for the permanent of a random symmetric matrix.
\begin{theorem}\label{perm1}
We are given $n\times n$ symmetric matrix $A$ with random entries $Y_{ij}$ which are independent moment bounded random variables for all pairs $(i,j)$ with $i\le j$ with parameter $L=1$ and $\E[Y_{ij}]=0$. Let $P(A)$ be the permanent of the matrix $A$  then
$$Pr[|P(A)|\ge t \sqrt{n!}]\le  \max\left\{e^{-n},e^2\cdot e^{-c\cdot t^{2/n}}\right\}$$
for some absolute constant $c>0$ and parameter $t>0$.
\end{theorem}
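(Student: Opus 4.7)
The plan is to apply Theorem~\ref{main2} to $P(A)$ viewed as a polynomial in the independent random variables $Z=\{Z_{ij}\}_{1\le i\le j\le n}$, where $Z_{ij}:=Y_{ij}=Y_{ji}$, in parallel with the proof of Theorem~\ref{perm}. Each permutation $\sigma\in S_n$ contributes a monomial $M_\sigma=\pm\prod_i Z_{\min(i,\sigma(i)),\max(i,\sigma(i))}$; a 2-cycle $(i,j)\in\sigma$ produces $Z_{ij}^2$, and all other edges of $\sigma$ give distinct $Z$'s to the first power. Hence $P(A)$, viewed as a polynomial in $Z$, has total power $q=n$ and maximal variable power $\Gamma=2$, so Theorem~\ref{main2} applies with $R=Q^3$ an absolute constant.

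Next I would estimate $\mu_r(P(A))$ for $r\in\{0,1,\ldots,n\}$. Every coefficient has absolute value $1$, and by moment-boundedness with $L=1$ and $\Gamma\le2$ each factor $\E[|Z_v|^{\deg}]$ is bounded by an absolute constant, so the per-monomial ``remaining factor'' product is at most $2^{n-r}$. For the combinatorial count, a power-$2$ specification of $Z_{ij}$ in the partial assignment $h_0$ forces the 2-cycle $(i,j)\subseteq\sigma$, while a power-$1$ specification admits the two alternatives $\sigma(i)=j$ and $\sigma(j)=i$; each specification consumes at least one position of $\sigma$, so at most $2^r(n-r)!$ permutations are consistent with any $h_0$ of size $r$. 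Thus $\mu_r\le C^n(n-r)!$ for some absolute $C$.

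Third, plugging these bounds and $\lambda=t\sqrt{n!}$ into Theorem~\ref{main2} and examining the exponent $\min_{r,\text{type}}\{\lambda^2/(\mu_0\mu_r L^r\Gamma^r R^r),\,(\lambda/(\mu_r L^r\Gamma^r R^r))^{1/r}\}$, the strongest contribution comes from the power-type term at $r$ of order $n$: by Stirling this evaluates to $(\lambda/\mu_r)^{1/r}\cdot\text{const}\ge c\,t^{2/n}$ in the regime $t^{2/n}\le n$, yielding the $e^{-c\,t^{2/n}}$ factor advertised in the bound. In the complementary regime $t^{2/n}>n$ the advertised bound reduces to $e^{-n}$, which can be obtained either from the same exponent (it exceeds $n$ in that range) or from a direct $n$-th moment Markov bound $\Pr[|P(A)|\ge t\sqrt{n!}]\le\E[|P(A)|^n]/(t\sqrt{n!})^n$ together with the moment inequalities underlying Theorem~\ref{main2}.

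The main obstacle is the combinatorial count in step two. Specifying $Z_{ij}$ with multiplicity $1$ leaves the binary ambiguity $\sigma(i)=j$ vs.~$\sigma(j)=i$, and the map $\sigma\mapsto M_\sigma$ is not injective in the symmetric setting (different cycles and their reverses can yield the same monomial). One must verify that all such multiplicities are absorbed into the universal factors $C^n$ and $R^n$, and do not produce a super-polynomial-in-$n$ blowup of $\mu_r$ that would spoil the final $t^{2/n}$ scaling.
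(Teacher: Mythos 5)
Your proposal has a genuine gap in step three. Direct application of Theorem~\ref{main2} does not yield the claimed bound, because the Gaussian-type term $e^{-\lambda^2/(\mu_0\mu_r L^r\Gamma^r R^r)}$ is uselessly close to $1$ for small $r$. Your own estimate gives $\mu_0\approx C^n n!$ and $\mu_1\approx C^n(n-1)!$ (the worst case, e.g.\ $\pm 1$ entries, is attainable), so at $r=1$ the exponent is $\lambda^2/(\mu_0\mu_1 L\Gamma R)\approx t^2/(C'^n(n-1)!)$, which is essentially zero for any $t$ that is not astronomically large. Since Theorem~\ref{main2} takes the \emph{max} over $r$ and both term types of $e^{-(\text{exponent})}$, this one term controls the final bound and gives $\Pr\le e^2\cdot e^{-o(1)}\approx e^2$, far weaker than the desired $e^2e^{-ct^{2/n}}$. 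The power-type term at $r$ of order $n$ that you identify as ``the strongest contribution'' is actually the most favorable one; the max is taken over all terms, so the least favorable one wins. The paper itself warns of exactly this: for high-power polynomials the black-box bounds of Theorems~\ref{main1} and~\ref{main2} are too weak, and the permanent theorems are proven by going back to the moment lemma machinery.

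What makes the paper's argument work, and what is lost in the reduction to $\mu_r$, is the cancellation $\Eb{Y_{ij}}=0$ applied to the \emph{centered} coefficients. The paper writes $P(A)=\sum_i g^{(i)}(Y)$ with $g^{(i)}$ in the centered variables $Y_v^{\tau}-\Eb{Y_v^{\tau}}$ and bounds each coefficient $w^{(i)}_{h'}$ directly. The crucial observation is that when completing a permutation consistent with $h'$, every entry outside $h'$ carries a factor $\Eb{Y_v^{\tau_{hv}}}$, and this vanishes unless $\tau_{hv}=2$; hence only permutations whose remaining part consists of $2$-cycles survive, giving $w^{(i)}_{h'}\le 2^{O(n)}n^{(n-q_i)/2}$ rather than anything like $(n-q_i)!$. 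Plugged into inequality~(\ref{inequality3}) (with $|M([\ell])|\le n^{2\ell}$), this yields $\E[|P(A)|^k]\le R^{nk}n^{nk/2}k^{nk/2}$, exactly matching Theorem~\ref{perm}. The generic machinery in Lemma~\ref{generalmoment} only passes to $\mu_r(w^{(i)})\le\mu_r(w)$ via $|\Eb{Y_v^{\tau}}|\le\Eb{|Y_v|^{\tau}}$, which is lossy here (it replaces $0$ by a positive quantity for $\tau=1$), and that loss is what produces the spurious factor of order $n!$ in $\mu_0$. In short: the combinatorial count you flag as the main obstacle is in fact fine; the irreparable step is treating Theorem~\ref{main2} as a black box instead of exploiting $\Eb{Y_{ij}}=0$ inside the moment computation.
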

Note that the above concentration inequalities can be easily derived from the Hypercontractivity Inequality in the special case of Gaussian and Rademacher random variables (Theorem \ref{hyper}).

\subsection{Applications in Randomized Rounding for Mathematical Programming Problems}

As we noted all current methods to prove concentration bounds for polynomials do not work well for high power polynomials. Another feature that makes current concentration methods fail is low expectation.
One application where such concentration bounds could be applied is in design and analysis of randomized rounding algorithms for non-linear mathematical programming problems. 

Many real-life optimization problems can be formulated using integer programming which is well-known to be computationally intractable (NP-hard). One way to solve such a problem both in theory and practice is to consider a linear programming relaxation, solve it using one of the standard methods and use the fractional optimal solution as a guidance in finding an integral solution of good quality.  The seminal paper of Raghavan and Thompson \cite{RT} suggested to round each boolean variable to one with probability $x^*_i$ and to zero with probability $1-x^*_i$ independently at random where $x^*$ is the optimal fractional solution. The analysis of such algorithms is based on applying Chernoff Bounds to each constraint of the integer program separately and then applying a union bound over all the constraints. Such a method proved to be useful for a wide range of models and led to approximation algorithms that still have best known performance guarantees today.

A natural generalization of this framework is to apply it to non-linear optimization models. Many such problems are still computationally tractable if we replace the constraint that variables must be boolean $x_i\in \{0,1\}$ with continuous constraints  $0\le x^*_i\le 1$, e.g. quadratic convex constraints. There are many real-life optimization problems with constraints and objective functions modeled  in such a way, e.g. we would like to optimize a congestion for a group of edges in a multi-commodity flow problem in a "fair" way, i.e. we don't want to have one edge to get significantly higher congestion than the other. The standard way to ensure that in practice is to optimize (or constrain) sum of   squared congestions over the edges in a that group. The constraints generated this way are convex quadratic constraints and continuous optimization problems with such constraints are polynomially solvable.

To analyze the randomized rounding framework for such mathematical programming models one  needs to apply concentration inequality to each non-linear constraint. If the size of the group of edges for which we are trying to optimize the total congestion in a fair way is sub-logarithmic and each edge in the fractional solution has a constant congestion (a situation quite natural from application viewpoint) then our concentration inequalities would be the only available tool to analyze such an algorithm.

%%%%%%%%%%%%%%%%%%%
\subsection{Sketch of Our Methods}\label{sec:methods}

Most concentration results for non-negative random variables are proven using Markov's inequality as follows:
\begin{align}
\pr{Z \ge \lambda} &= \pr{g(Z) \ge g(\lambda)} \le \frac{\Eb{g(Z)}}{g(\lambda)} \label{eqn:genericMarkov}
\end{align}
where $Z$ is the random variable that we are trying to show concentration of and $g$ is either $g(z)=z^k$ for some positive even integer $k$, $g(z)=e^{tz}$ for some real $t>0$, or some other non-negative increasing function $g$. One then computes an upper bound on either the $k$th moment $\Eb{g(Z)}=\Eb{Z^k}$ or the moment generating function $\Eb{g(Z)}=\Eb{e^{tZ}}$. Chernoff bounds are proven using moment generating functions, so it would be most natural to use moment generating functions to prove our bounds as well. Unfortunately the tails of the distribution of polynomials can be sufficiently large to make the moment generating function $\Eb{e^{tZ}}$ infinite for all $t>0$. Kim and Vu worked around this issue by applying (\ref{eqn:genericMarkov}) not to the polynomial itself but to various auxilliary random variables with better behaved tails. Unfortunately a union bound over these auxiliary variables introduced an extraneous factor logarithmic in the number of variables into their bounds (see Section \ref{connection} for a comparison of our results to theirs). We avoid this issue by computing moments instead of the moment generating function.

We now give an instructive half-page bound on the second moment of a multilinear polynomial $f(X) = \sum_{h\in {\cal  H}} w_h\prod_{v\in h}X_v$ where all $\Eb{X_v}=0$, $X_v$ are moment bounded with parameter $L$, and all $h \in {\cal H}$ have $|h|=q$ and $w_h \ge 0$. Using definitions, linearity of expectation, and independence we get
\begin{align}
\Eb{f(X)^2} & = \Eb{\sum_{h_1\in {\cal  H}} \sum_{h_2\in {\cal  H}} w_{h_1} w_{h_2} \left(\prod_{v \in h_1} Y_v \right) \left(\prod_{v \in h_2} Y_v \right)} \nonumber\\
%& = \sum_{h_1\in {\cal  H}(H)} \sum_{h_2\in {\cal  H}(H)} w_{h_1} w_{h_2}\Eb{\left(\prod_{v \in h_1} Y_v \right) \left(\prod_{v \in h_2} Y_v \right)} \nonumber\\
& = \sum_{h_1\in {\cal  H}} \sum_{h_2\in {\cal  H}} w_{h_1} w_{h_2}\prod_{v \in (h_1 \cup h_2)} \Eb{Y_v^{d_v}} \label{eqn:begin}
\end{align}
where $d_v \in \{1,2\}$ is the number of $h_i \ni v$. Now if $d_v=1$ for any $v$ we have $\Eb{Y_v^{d_v}}=\Eb{Y_v}=0$, so the only non-zero terms of the sum (\ref{eqn:begin}) are when $h_1=h_2$. We therefore get
\begin{align}
\Eb{f(X)^2} & = \sum_{h\in {\cal  H}} w_{h} w_{h}\prod_{v \in h} \Eb{Y_v^2} \nonumber\\
& \le \sum_{h\in {\cal  H}} \mu_q w_{h} \prod_{v \in h} (2L\Eb{|Y_v|}) \nonumber\\
& = (2L)^q \mu_q \sum_{h\in {\cal  H}} w_{h} \prod_{v \in h} \Eb{|Y_v|} \nonumber\\
& = (2L)^q \mu_q \mu_0 \label{eqn:variance}
.\end{align}
where we used the fact that $\Eb{Y_v^2} \le 2 L \Eb{|Y_v|}$ from moment boundedness and $w_h \le \max_h w_h = \mu_q$ from the definition of $\mu_q$. Combining (\ref{eqn:variance}) with Markov's inequality (\ref{eqn:genericMarkov}) yields
\[
\pr{|f(X)| \ge \lambda} \le \left(\frac{\lambda^2}{(2L)^q \mu_q \mu_0}\right)^{-1}
.\]
which is comparable to the $e^{-\lambda^2/(\mu_0 \mu_q (RL)^q)}$ term in Theorem \ref{main1special} for small $\lambda$. In order to get exponentially better bounds for larger $\lambda$ we will compute higher moments.

Now we outline what we do differently to handle higher moments and general polynomials.

The first step is to express polynomial $f$ over variables $Y_v$ as a sum of polynomials $g^{(1)},\dots,g^{(m)}$ over variables $Y_v^\tau - \Eb{Y_v^\tau}$ for various $1 \le \tau \le q$. The main task is bounding the moments of each of these polynomials. We later combine these bounds to get a bound on the moment of $f$. Each of the centered polynomials has $\Eb{Y_v^\tau - \Eb{Y_v^\tau}}=0$, which takes the place of the $\Eb{Y_v}=0$ in the above special case. We also ensure that each $g^{(i)}$ has non-negative weights.

Bounding moments of some $g_i$ begins by expanding $\Eb{g_i^k}$ similar to (\ref{eqn:begin}) with a sum over $h_1,\dots,h_k$. As before only terms of the sum where every vertex $v$ occurs in $d_v \ge 2$ different hyperedges are non-zero, but this is no longer equivalent to the simple condition $h_1=h_2$.

We find it helpful to separate the structure of the hyperedges $h_1,\dots,h_k$ from the identity of the variables involved. We therefore generate $h_1,\dots,h_k$ by composing two processes: first generate $h_1,\dots,h_k$ over vertex set $[\ell]$ for every $\ell \ge 1$ and then consider every possible embedding of those artificial vertices into the vertex set $[n]$. For a fixed sequence of hyperedges over vertex set $[\ell]$ we do arguments analogous to (\ref{eqn:variance}) to get a product of various $\mu_i$ and $L$. This bound is a function of the number of connected components $c$ in $h_1,\dots,h_k$. Finally we do some combinatorics to prove a \emph{counting lemma} on the number of possible $h_1,\dots,h_k$ with vertex set $[\ell]$ with all degrees at least two and $c$ connected components.

One additional complication is that we need to use moment boundedness to bound moments of order much larger than the second moments $\Eb{Y_v^2} \le 2 L \Eb{|Y_v|}$ we used in the above special case. If we treated the factor that replaces that $2$ as a constant that would make the constant $R$ in our final bounds linear in $q$ instead of an absolute constant. Fortunately these extra factors are small for \emph{most} of the possible $h_1,\dots,h_k$, which enables our counting lemma to absorb these extra factors.

Our lower bounds are based on lower-bounding the concentration of certain concrete polynomials. It is well known that Chernoff bounds are essentially tight, i.e.\ a sum of $n$ i.i.d.\ 0/$M$ random variables each with expected value $\mu/n$ has probability roughly $e^{-\lambda^2/(2\mu M)}$ of exceeding its mean by $\lambda \le \mu$. Our lower bound of $e^{-\tilde O(\lambda^2/(\mu_0 \mu_r))}$ follows from a degree $q$ polynomial that acts like this linear polynomial with $M=\mu_r$ and $\mu=\mu_0$.
The idea behind the lower bound corresponding to $e^{-(\lambda/\mu_r)^{1/r}}$ is the fact that $\pr{(\sum_i X_i)^r \ge \lambda} = \pr{\sum_i X_i \ge \lambda^{1/r}} = e^{-\tilde \Theta(\lambda^{1/r})}$ where $\sum_i X_i$ is binomially distributed with mean 1. Our lower bound does similar arguments with a multilinearized version of $(\sum_i X_i)^r$.

%%%%%%%%%%%%%
\subsection{Definitions}\label{sec:defs}

We now state the generalizations of the notations given in the introduction for general polynomials.

A \emph{powered hypergraph} $H$ consists of a set ${\cal V}(H)$ of vertices and a set ${\cal H}(H)$ of powered hyperedges.
A \emph{powered hyperedge} $h$ consists of a set $\ve{h} \subseteq {\cal V}(H)$ of $|\ve{h}|=\eta(h)$ vertices and an $\eta(h)$-element \emph{power vector} $\tau(h)$ with one strictly positive integer component $\tau(h)_v=\tau_{hv}$ per vertex $v \in \ve{h}$. We will hereafter omit the ``powered'' from ``powered hypergraph'' and ``powered hyperedge'' since we have no need to refer to the basic hypergraphs used in the introduction.
For any powered hyperedge $h$ we let $q(h)=\sum_{v \in \ve{h}}\tau_{hv}$.
For each such powered hypergraph $H$ and real-valued weights $w_h$ for its hyperedges, we define a polynomial
\begin{align}
f(x) &= \sum_{h\in {\cal  H}(H)} w_h\prod_{v\in {\cal V}(h)}x_v^{\tau_{hv}}\label{eqn:f}
.\end{align}
The hyperedge $h$ corresponds to a monomial $\prod_{v\in h}x_v^{\tau_{hv}}$.
The parameters $q(h)$ and $\eta(h)$ will be called the \emph{total power} and \emph{cardinality} of the hyperedge $h$ (or monomial corresponding to $h$). Let $\Gamma=\max_{h\in {\cal  H}(H), v\in h}\tau_{hv}$ be the maximal power of a variable in polynomial $f(x)$, e.g.\ $\Gamma=1$ for multilinear polynomials.
 %We will consider polynomials with nonnegative coefficients $w_h\ge 0$ for all $h\in {\cal  H}$.
We assume, by convention, that $\prod_{i\in \emptyset}x_i=1$. Since the variables in our polynomials are indexed by vertices in our hypergraphs we use the terms ``variable'' and ``vertex'' interchangeably.

For powered hyperedges $h_1$ and $h_2$ (not necessarily hyperedges of a hypergraph) we write $h_1 \extends h_2$ if ${\cal V}(h_1) \supseteq {\cal V}(h_2)$ and $\tau_{h_1v} = \tau_{h_2v}$ for all $v \in {\cal V}(h_2)$. In the context of hypergraph $H$ with vertex set $[n]$ clear from context, for a given collection of independent random variables $Y=(Y_1,\dots,Y_n)$, integer $r\ge 0$  and weights $w$ we define
\begin{eqnarray}\label{def:mu}
\mu_r(w,Y)=\max_{h_0|\ {\cal V}(h_0)\subseteq {\cal V}(H),\ q(h_0)=r}\left(\sum_{h\in {\cal  H}(H)| h \extends h_0} |w_h|\prod_{v\in \ve{h}\setminus \ve{h_0}}\Eb{|Y_v^{\tau_{hv}}|}\right)
\end{eqnarray}
where $h_0$ ranges over all possible powered hyperedges with vertices from $[n]$ with total power $q(h_0)=r$. The cardinality of $h_0$ is not explicitly restricted but it cannot exceed $r$ since the powers $\tau_{h_0v}$ are strictly positive integers summing to $q(h_0)$.
We will sometimes write $\mu_r(f,Y)$ for polynomial $f(Y)$ instead of $\mu_r(w,Y)$ to emphasize the dependence on polynomial $f(Y)$.
If we write $\mu_r(f)$ for a polynomial $f$ this means $\mu_r(w,Y)$ for the weight function $w$ and random variable vector $Y$ corresponding to $f$ as in (\ref{eqn:f}). If the polynomial is clear from context we write simply $\mu_r$.
In the special case that all coefficients are non-negative $\mu_r$ is upper bounded by the maximal expected partial derivative of order $r$ of the polynomial $f(x)$, and this bound is loose for two reasons. First we do not have multipliers that depend on powers that are present in derivatives. Second we throw away some positive terms that are present in derivatives since we enforce $\tau_{hv} = \tau_{h_0 v}$ for all $v \in {\cal V}(h_0)$, whereas derivatives would consider all $h$ with $\tau_{hv} \ge \tau_{h_0 v}$. For example for the polynomial $Y_0^3 Y_1^3 + Y_1^2$ of non-negative random variables we have $\mu_2=1$ while the maximal expected second partial derivative is equal to
$$\max\left\{2+6\Eb{Y_0^3}\Eb{Y_1},6\Eb{Y_0}\Eb{Y_1^3},9\Eb{Y_0^2}\Eb{Y_1^2}\right\}.$$ Overall, our definition of smoothness is a bit tighter (although less natural) than the partial derivatives used in \cite{KV}, \cite{V1} that inspired it. We decided to use it since it naturally arises in our analysis.

%%%%%%%%%%%%%%%%%%%%%%%%%%%%%%%%%%%%%%%%%%%%%%%%%%%%%%%%%%%%%%%%%%%%%%5
\subsection{Comparison with Known Concentration Inequalities}\label{connection}
There are many concentration inequalities dealing with the case when we are interested in a sum of weakly dependent random variables. The paper \cite{JR} provides a good survey and comparison of various inequalities for that setting. Below we will survey only known concentration inequalities for the case of polynomials of independent random variables. The previous works were dealing either with the case of boolean random variables, variables distributed in the interval $[0,1]$, Gaussian random variables or log-concave random variables.
\subsubsection{Comparing with the Kim-Vu inequality}
Probably the most famous concentration inequality for polynomials is due to Kim and Vu \cite{KV} published in 2000. There are many variants, extensions and equivalent formulations of that inequality. We consider a variant from the survey paper by Vu \cite{V1} (Theorem 4.2 in Section 4.2).
\begin{theorem}[Kim-Vu Concentration Inequality]\label{KimVu}
Consider a polynomial $f(Y)=f(Y_1,\dots,Y_n)$ with coefficients in the interval $[0,1]$. We denote $\partial_A f(Y)$ a polynomial obtained from $f(Y)$ by taking partial derivatives with respect to $A$ where $A$ is a multiset of indices probably with repetitions. Let $Y_1,\dots,Y_n$ be independent random variables with arbitrary distributions on the interval $[0,1]$. Let $q$ be the degree of polynomial $f(Y)$ and
$\E_j[f(Y)]=\max_{|A|\ge j}\E[\partial_A f(Y)]$. Assume we are given an integer $q'\le q$ and a collection of positive numbers
${\cal E}_0\ge  {\cal E}_1\ge \dots \ge {\cal E}_{q'}=1$ and $\lambda$ satisfying
\begin{enumerate}
\item ${\cal E}_j\ge \E_j[f(Y)]$ for $j=0,\dots,q'$;
\item ${\cal E}_j/{\cal E}_{j+1}\ge \lambda +4j\log n$ for $j=0,\dots, q'-1$;
\end{enumerate}
then the following holds
$$Pr[|f(Y)-\E[f(Y)]|\ge c_q\sqrt{\lambda {\cal E}_0{\cal E}_1}] \le d_q e^{-\lambda/4}$$
where $c_q\approx q^{q/2}$ and $d_q=2^{q+1}-2$ (see precise definitions in \cite{V1}).
\end{theorem}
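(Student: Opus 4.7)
The plan is to prove this by induction on the degree $q$ of the polynomial $f$. For the base case $q=1$, the polynomial $f(Y)=\sum_i w_i Y_i$ is linear in independent $[0,1]$-valued random variables, and Hoeffding's inequality directly yields the required tail bound: the gap condition ${\cal E}_0/{\cal E}_1 \ge \lambda$ plays exactly the role of the usual variance-to-range ratio, so the deviation $c_1\sqrt{\lambda{\cal E}_0{\cal E}_1}$ lies in the Gaussian regime of the Chernoff bound.

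For the inductive step I would analyze the Doob martingale $M_i = \E[f(Y)\mid Y_1,\dots,Y_i]$. The increment $|M_i - M_{i-1}|$ is controlled by the partial derivative $\partial_i f$ evaluated on the variables revealed so far, which is itself a polynomial of degree $q-1$. A direct appeal to Azuma with the worst-case Lipschitz coefficient of $f$ is too pessimistic, so the key trick is truncation: on the typical event $G$ that each lower-order partial derivative $\partial_A f(Y)$ stays within a constant factor of ${\cal E}_{|A|}$, the martingale has bounded differences and Azuma--Hoeffding yields the $e^{-\lambda/4}$ tail on $G$. The atypical event $\overline{G}$ is controlled by applying the inductive hypothesis to each lower-degree polynomial $\partial_A f$, using the shifted sequence $({\cal E}_{j+|A|})_j$ and an appropriately rescaled deviation parameter.

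Combining the two estimates requires a union bound over all multisets $A$ of indices of size at most $q-1$, of which there are $O(n^{q})$. To keep the total bad-event probability at most $d_q e^{-\lambda/4}$, one must inflate the threshold at each inductive level, which is exactly the purpose of the factor $4j\log n$ appearing in the hypothesis ${\cal E}_j/{\cal E}_{j+1}\ge \lambda + 4j\log n$. Telescoping the constants across the $q$ recursive levels produces the factors $c_q \approx q^{q/2}$ in the deviation and $d_q \approx q^q$ in the failure probability.

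The main obstacle I expect is the bookkeeping of parameters across the $q$ levels of recursion: at each level one has to balance the share of $\lambda$ that goes into the Azuma tail on the good event against the share that is spent on the inductive tail bound for the bad event, and one must arrange these so that the gap conditions (2) are tight. This is precisely the step that forces the extraneous $\log n$ factors which the moment-based approach developed in the rest of the paper is designed to avoid.
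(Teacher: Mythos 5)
The paper does not prove Theorem~\ref{KimVu} via the martingale route you sketch. It states the theorem as a known result cited from~\cite{V1}, and then \emph{derives} it as a corollary of the stronger Theorem~\ref{main1}: immediately after the statement, the paper rewrites the bound of Theorem~\ref{main1} in Chernoff form, uses condition (2) together with $\mathcal{E}_j \ge \E_j[f(Y)] \ge \mu_j$ to show $\sqrt{\lambda\mathcal{E}_0\mathcal{E}_1} \ge \max_{r}\max\{\sqrt{\lambda\mu_0\mu_r},\,\lambda^r\mu_r\}$, and takes $\tau=\lambda/4$. Theorem~\ref{main1} itself is proven by a completely different mechanism: decomposing $f$ into centered polynomials, bounding $\E[g(Y)^k]$ directly via the counting machinery of Sections~\ref{sec:centered}--\ref{sec:counting}, and applying Markov's inequality to the $k$-th moment. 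Your proposal instead reconstructs the \emph{original} Kim--Vu argument (Doob martingale, truncation to a good event, Azuma there, induction on the partial derivatives on the bad event, union bound producing $\log n$), and as a conceptual sketch it has the right shape and correctly pins down the role of the $4j\log n$ slack and the source of the $n$-dependence that the paper's method eliminates. The step you handle too lightly is ``Azuma on $G$'': restricting a martingale to a typical event destroys the martingale property, so Kim and Vu actually work with a stopped/modified martingale and bound the increments by \emph{conditional expectations} of the partial derivatives rather than the derivatives themselves; that bookkeeping is precisely where the constants $c_q \approx q^{q/2}$, $d_q \approx q^q$ accumulate. So your route is genuinely different from the paper's: the martingale-plus-union-bound approach is shorter and more probabilistic but pays a $\log n$ tax, whereas the paper's moment method is combinatorially heavier but removes the $n$-dependence, tightens the smoothness parameters, and extends to moment-bounded (e.g.\ Gaussian, Poisson) random variables.
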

This stronger version of the original Kim-Vu inequality \cite{KV} has dependence on parameter $q'$ which could be helpful for some applications (see discussion in \cite{V1}). We compare below our inequality with the inequality in Theorem \ref{KimVu} when $q'=q$ which includes the original  Kim-Vu inequality \cite{KV} and is the most relevant variant in terms of various applications (our inequality does not seem to be comparable with the general version of the Theorem \ref{KimVu}).

Re-writing our inequality from Theorem \ref{main1} in the same form we could derive
$$Pr\left[|f(Y)-\E[f(Y)]|\ge \max_{r=1,\dots,q} \max\left\{ \sqrt{\tau \mu_0\mu_r \cdot L^r  \cdot \Gamma^r\cdot R^q}, \tau^r \mu_r \cdot L^r \cdot \Gamma^r\cdot R^q \right\} \right]\le e^2\cdot e^{-\tau}$$
instead of the bound in the Theorem \ref{main1} for any $\tau>0$. Using the properties of bounds ${\cal E}_j$ we derive
${\cal E}_0\ge  \lambda^{j}\E_j[f(Y)]$ and ${\cal E}_1\ge  \lambda^{j-1}\E_j[f(Y)]$. In addition, as we already noticed, our definition of smoothness is tighter than the one based on partial derivatives, i.e. ${\cal E}_j\ge \E_j[f(Y)]\ge \mu_j$. Therefore,
 $$\sqrt{\lambda {\cal E}_0{\cal E}_1}\ge \max_{r=1,\dots,q} \max\left\{ \sqrt{\lambda \mu_0\mu_r }, \lambda^r \mu_r\right\}.$$
Choosing $\tau=\lambda/4$, we obtain that the concentration inequality of Theorem \ref{main1} implies the inequality from Theorem \ref{KimVu} (we don't explicitly specify the relationship between our  absolute constant $R$ and constants used in the definition of $c_q$ and $d_q$).  We list below the various ways our inequality generalizes or tightens the inequality from Theorem \ref{KimVu}.
\begin{enumerate}
\item The bounds in our inequality do not depend on the total number of random variables $n$ while all variants of the Kim-Vu inequality have this dependence due to the usage of the union bound in their proof.
\item Our inequality covers a much wider range of random variables, including most commonly used ones not just the variables distributed in the interval $[0,1]$.
\item Our definition of smoothness while being related to (and strongly motivated by) the smoothness based on partial derivatives is tighter and for some applications involving polynomials with large $\Gamma$ will provide a better concentration bound.
\item Our bounds have a better dependence on the degree of the polynomials. We also introduce a parameter $\Gamma$ that is a maximal power of a variable in a polynomial which leads to substantially tighter bounds for the most important special case of multilinear polynomials.
\end{enumerate}

Another concentration inequality that appeared in the literature is due to Boucheron et al.\ \cite{BBLM} (Section 10).
\begin{theorem}\label{BBLM}
Consider a multilinear degree $q$ polynomial $f(Y)=f(Y_1,\dots,Y_n)$  of the independent boolean random variables  $Y_1,\dots,Y_n$.  Then
$$Pr[f(Y)\ge \E[f(Y)]+\lambda] \le e^{-\frac{1}{R\cdot q}}\max\left\{  \max_{r=1,\dots,q}e^{-\left(\frac{\lambda^2}{16q^2\mu_0\mu_r}\right)^{1/r}}, \max_{r=1,\dots,q}e^{-\left(\frac{\lambda}{4q \mu_r}\right)^{1/r}}\right\}$$
for some absolute constant $R>0$.
\end{theorem}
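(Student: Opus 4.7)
The plan is to use the moment method: apply Markov's inequality to $(f(Y) - \E[f(Y)])^k$ for a suitable even integer $k$, bound the $k$th central moment of $f(Y)$ in terms of $k$, $q$, and the smoothness parameters $\mu_r$, and then optimize $k$. A useful simplification in the Boolean setting is that $Y_v^j = Y_v$ for every $j \ge 1$, so the $Y_v$ are moment bounded with $L=1$ and all raw moment ratios $\E[Y_v^j]/\E[Y_v]$ equal $1$; this removes the geometric $R^q$ losses that appear in Theorem \ref{main1} and lets the bound depend on $q$ only polynomially.

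The cleanest route, which is the one taken in \cite{BBLM}, is an inductive Efron--Stein / entropy argument. One establishes a ``one-step'' moment inequality of the form $\|Z - \E Z\|_k \le C\sqrt{k}\,\|V\|_{k/2}^{1/2}$, where $V = \sum_i \E_i[(Z - \E_i Z)^2]$ is the Efron--Stein conditional variance and $\E_i$ denotes expectation over $Y_i$ alone. For a multilinear polynomial $f$ of independent Boolean variables, $V$ is itself a polynomial of degree at most $2q-2$ whose smoothness parameters are controlled by those of $f$. Iterating this recursion $r$ times peels off $r$ factors of $\sqrt{Ckq}$ and reduces the problem to bounding an $(r{-}1)$-fold iterated carr\'e du champ, which in turn is dominated by combinations of $\mu_0\mu_r$ and $\mu_r^2$. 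The outcome should be a moment bound of the form
$$\|f(Y) - \E[f(Y)]\|_k \le \max_{r=1,\dots,q}\left\{ (Ckq)^{r/2}\sqrt{\mu_0\mu_r} \;+\; (Ckq)^r \mu_r \right\}$$
for some absolute constant $C$.

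Alternatively, one can reproduce the same bound combinatorially via the counting-lemma strategy sketched in Section \ref{sec:methods} of the present paper: expand $\E[(f - \E f)^k]$ as a sum over $k$-tuples of hyperedges $(h_1,\dots,h_k)$, use $\E[Y_v - \E Y_v] = 0$ to kill every tuple in which some vertex has multiplicity $1$ in the multiset union $\bigcup_i h_i$, and group the surviving configurations by the shape of the resulting multi-hypergraph. A shape with $r$ connected components contributes a product of $r$ factors drawn from the $\mu_r$'s of $f$, and the Boolean assumption again keeps every per-vertex moment prefactor equal to $1$, which is what makes the counting argument collapse to polynomial-in-$q$ constants.

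The main obstacle is extracting the precise $1/r$ exponents and polynomial dependence on $q$ from the moment bound. From the displayed inequality, Markov gives $\Pr[f - \E f \ge \lambda] \le \lambda^{-k}\|f - \E f\|_k^k$; for each fixed $r$, balancing the first summand by choosing $k \asymp (\lambda^2/(q^2\mu_0\mu_r))^{1/r}$ yields the $\exp(-(\lambda^2/(16 q^2\mu_0\mu_r))^{1/r})$ term, and balancing the second by $k \asymp (\lambda/(q\mu_r))^{1/r}$ yields the $\exp(-(\lambda/(4q\mu_r))^{1/r})$ term. Taking the maximum over $r = 1,\dots,q$ and absorbing the lower-order slack from the union / rounding of $k$ to an integer into the prefactor $e^{-1/(Rq)}$ completes the argument.
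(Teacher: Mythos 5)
The paper does not prove Theorem~\ref{BBLM}. It is stated verbatim as a known result of Boucheron, Bousquet, Lugosi and Massart (\cite{BBLM}, Section~10), offered purely for comparison with Theorem~\ref{main1}, and the paper explicitly notes that it ``is just a corollary of a moment inequality proved for much more general functions than polynomials.'' There is therefore no proof in the paper against which to compare your sketch.

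Taken as a free-standing argument, your outline correctly names the source and the Efron--Stein moment-inequality route, but the two load-bearing steps are claimed rather than established: that the iterated conditional variance of a multilinear Boolean polynomial has smoothness parameters controlled by the $\mu_r(f)$, and that iterating yields a central moment bound of the form $\|f-\E f\|_k\le\max_r\{(Ckq)^{r/2}\sqrt{\mu_0\mu_r}+(Ckq)^r\mu_r\}$. That displayed form is also the wrong shape to yield the theorem's exponents: solving $(Ckq)^{r/2}\sqrt{\mu_0\mu_r}\le\lambda/e$ for $k$ puts the factor $1/q$ \emph{outside} the $1/r$-th root, whereas the theorem has $16q^2$ \emph{inside} it; for $r$ near $q$ the resulting exponent is smaller by roughly a factor of $q$, so the tail you would derive is weaker than the stated one. (The form one actually needs is closer to $Cq\cdot k^{r/2}\sqrt{\mu_0\mu_r}$, with $q$ to the first power rather than the $r/2$-th.) Finally, the suggestion that the paper's counting-lemma machinery would ``reproduce the same bound'' misreads the paper: that machinery proves Theorem~\ref{main1}, whose first term has the exponent $\lambda^2/(\mu_0\mu_r R^q)$ \emph{without} the $1/r$-th root, and the paper emphasizes precisely that this makes the two bounds incomparable in general, not interchangeable.
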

The second term in the maximum looks very similar to ours in the Theorem \ref{main1} but the first term is substantially higher due to the power $1/r$. Also their inequality does not seem to generalize to general class of random variables considered in Theorem \ref{main1}. Note that the Theorem \ref{BBLM} is just a corollary of a moment inequality proved for  much more general functions than polynomials.

\subsubsection{Gaussian and Rademacher Random Variables}
Another class of known concentration inequalities deals with the case when random variables are either centered (or zero mean) Gaussians or variables that have value $+1$ or $-1$ with probability $1/2$ (such random variables are often called Rademacher random variables). The history of moment and concentration inequalities in this setting is quite rich, we refer the reader to the Lecture 16 in Ryan O'Donnell Lecture Notes on Boolean Analysis \cite{hist} or the book by S. Janson \cite{J} (Sections V and VI). We will call the moment and corresponding concentration inequalities the Hypercontractivity Inequalities for the formal proofs see Theorems 6.7 and 6.12 in \cite{J}.
\begin{theorem}[Hypercontractivity Concentration Inequality]\label{hyper}
Consider a degree $q$ polynomial $f(Y)=f(Y_1,\dots,Y_n)$ of independent centered Gaussian or Rademacher random variables $Y_1,\dots,Y_n$.  Then
$$Pr[|f(Y)-\E[f(Y)]|\ge \lambda] \le e^2\cdot e^{-\left(\frac{\lambda^2}{R\cdot Var[f(Y)]}\right)^{1/q}},$$
where $Var[f(Y)]$ is the variance of the random variable $f(Y)$ and $R>0$ is   an absolute constant.
\end{theorem}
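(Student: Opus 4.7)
The plan is to derive Theorem \ref{hyper} from the classical Bonami--Beckner hypercontractivity moment inequality followed by a Markov tail bound. After replacing $f$ by $g = f - \E[f]$, which is still a polynomial of total degree at most $q$ with $\E[g]=0$ and $\Eb{g^2}=Var[f(Y)]$, the problem reduces to showing concentration of a centered degree-$q$ polynomial of independent Rademacher or Gaussian variables around zero.

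\emph{Step 1 (Hypercontractivity).} I would establish
\[
\Eb{|g|^p}^{1/p} \le (p-1)^{q/2}\,\Eb{g^2}^{1/2} \qquad \text{for every } p \ge 2,
\]
for any degree-$q$ polynomial $g$ of independent Rademacher or Gaussian variables. In the Rademacher case (WLOG multilinear, since $Y_v^2=1$) I would proceed by induction on the number of variables from Bonami's two-point inequality
\[
\bigl(\tfrac12|a+b|^p + \tfrac12|a-b|^p\bigr)^{1/p} \le \bigl(a^2 + (p-1)b^2\bigr)^{1/2},
\]
splitting off the last variable as $g = g_0(Y_1,\dots,Y_{n-1}) + Y_n\, g_1(Y_1,\dots,Y_{n-1})$ with $\deg(g_1) \le q-1$, applying the two-point inequality conditionally on $Y_1,\dots,Y_{n-1}$, and closing the induction using Minkowski in the conditional $L^p$ norm. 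For Gaussians I would either pass to the limit from Rademachers via a CLT argument (writing each $Y_v$ as a normalized sum of independent Rademachers and using continuity of polynomial moments) or invoke Nelson's hypercontractive estimate for the Ornstein--Uhlenbeck semigroup, equivalent to Gross's logarithmic Sobolev inequality.

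\emph{Step 2 (Markov and optimization).} Apply Markov's inequality to $|g|^p$:
\[
\pr{|g|\ge \lambda} \le \frac{\Eb{|g|^p}}{\lambda^p} \le \left(\frac{(p-1)^{q/2}\sqrt{Var[f(Y)]}}{\lambda}\right)^{\!p}.
\]
Choose $p$ so that the base equals $1/e$, that is, $p-1 = \bigl(\lambda^2/(e^2\,Var[f(Y)])\bigr)^{1/q}$; this is admissible ($p\ge 2$) provided $\lambda^2 \ge e^2\,Var[f(Y)]$. The resulting bound is
\[
\pr{|g|\ge\lambda} \le e^{-p} \le e^{-1}\cdot\exp\!\left(-\bigl(\lambda^2/(e^2\,Var[f(Y)])\bigr)^{1/q}\right),
\]
which matches the claimed form with $R=e^2$. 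In the complementary small-$\lambda$ regime $\lambda^2 < e^2\,Var[f(Y)]$, the exponent $(\lambda^2/(R\cdot Var[f(Y)]))^{1/q}$ is at most $1$, so $e^2 e^{-(\cdot)^{1/q}} \ge e \ge 1$ and the stated bound holds trivially.

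\emph{Main obstacle.} All the real work lies in Step 1: the two-point inequality is an elementary one-variable convexity fact, but cleanly threading it through the degree-induction and then transferring it from Rademachers to Gaussians is the delicate part. Since this is a classical result with detailed textbook proofs (Theorems~6.7 and~6.12 in Janson~\cite{J}), in practice I would cite those proofs and present only the Markov optimization of Step~2 in detail.
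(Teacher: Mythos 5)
Your proposal is correct and matches the route the paper itself takes: the paper does not re-prove this statement but cites it directly to Janson's Theorems~6.7 (the hypercontractive moment bound $\|g\|_p \le (p-1)^{q/2}\|g\|_2$) and~6.12 (the concentration consequence), exactly the two ingredients you identify. Your Step~2 Markov optimization is carried out correctly (the choice $p-1=(\lambda^2/(e^2\,Var[f]))^{1/q}$ makes the base $1/e$, the $p\ge 2$ admissibility condition is right, and the small-$\lambda$ regime is handled by triviality since $e^2 e^{-x}\ge 1$ for $x\le 1$), so the only substantive work is the classical hypercontractivity estimate, which both you and the paper appropriately delegate to the standard reference.
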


It is well-known that functions of Gaussian random variables are better concentrated around their mean than for example functions of Boolean random variables even in such a simple case as a sum of independent random variables. Therefore, in general we cannot expect to match the bound of Theorem \ref{hyper} in the setting of moment bounded random variables.
Nevertheless, if $M_2\approx \max_{r\in [q]}\mu_0\mu_r$ (e.g.\ it happens when power $q=O(1)$, the polynomial is multilinear, $\mu_r = O(1)$ for $r\in [q-1]$ and $w_h\in \{0,1\}$ for all hyperedges $h$) and $\lambda\le \mu_0$ then Theorem \ref{main1} provides a better concentration bound even in this setting.

  An interesting concentration inequality for degree $q$ polynomials of centered Gaussian random variables was recently proven by R. Latala \cite{L}. This inequality generalizes the previously known inequalities for the case when $q=2$ \cite{HW}. The papers by Major \cite{major} and Lehec \cite{lehec} simplify and explain Latala's proof. Latala uses certain smoothness parameters that seem to be natural only in the setting of continuous  random variables. We do not see the way to define similar smoothness parameters in the setting of general moment bounded (or even boolean) random variables.

\subsubsection{Log-Concave  Random Variables}

We define log-concave random variables in the Section \ref{sec:exampleMom} and give many examples of such variables. Latala and Lochowski \cite{LL} consider the setting with non-negative log-concave random variables and multi-linear polynomials. Recently, Adamczak and  Latala  \cite{AL} considered symmetric log-concave random variables and polynomials of degree at most three and  symmetric exponential random variables (or variables having Laplace distribution) for polynomials of arbitrary degree.  The main drawback of their approach in \cite{LL} is that they estimate tails of  random variables instead of estimating the deviation from the mean which is required in most applications. We can show that their smoothness parameters can be derived from ours $\mu_r$ (and the tail bounds) in the case of exponential random variables or any random variables that are tight for our moment boundness condition, i.e.\ $\Eb{|X|^{i}}\approx i L \Eb{|X|^{i-1}}$.

\subsection{Other Concentration Inequalities for Polynomials}

  Another line of attack on understanding the concentration of  polynomials is to use the structure of polynomials and some smoothness parameters analogous to the partial derivatives or our parameters $\mu_r$. Many of these known inequalities provide tight upper and lower bounds for moments but involve hard to estimate smoothness parameters. In the case of Gaussian random variables tight concentration bounds for polynomials of independent random variables were obtained by Hanson and Wright  \cite{HW} for  and $q=2$ and  Borell \cite{B}, Arcones and Gine \cite{AG} for $q\ge 3$. In the case of Rademacher random variables analogous results but based on different methods were obtained by Talagrand \cite{T} for $q=2$ and Boucheron et al. \cite{ BBLM} for $q\ge 3$. Adamczak \cite{A} proved a concentration  inequality for general functions of a general class of random variables.
  All these results except \cite{HW} and \cite{T} (i.e. the case of quadratic polynomials) use parameters that involve expectations of suprema of certain empirical processes that are in general not easy to estimate which limits applicability of these inequalities.

  Another interesting class of inequalities was obtained by using the so-called "needle decomposition method" in the field of Geometric Functional Analysis. It is a rich research area and we refer the reader to the survey paper by Nazarov, Sodin and Volberg \cite{NSV}. An interesting moment inequality which seem to generalize and tighten many previously known inequalities in this area was shown by Carbery and Wright \cite{CW} (see Theorem 7). It implies the following concentration inequality via application of Markov's inequality
  \begin{theorem}\label{generalTheorem1}
Consider a  degree $q$ polynomial $f(Y)=f(Y_1,\dots,Y_n)$. Assume that random variables  $Y_1,\dots,Y_n$ are distributed according to some log-concave measure in $R^n$ (i.e. they are not necessarily independent).  Then
$$Pr[|f(Y)-\E[f(Y)]|\ge \lambda] \le  e^2\cdot e^{-\left(\frac{\lambda}{R\sqrt{Var[f(Y)]}}\right)^{1/q}}$$
for some absolute constant $R>0$.
  \end{theorem}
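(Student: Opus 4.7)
The plan is to derive this tail bound directly from Theorem 7 of Carbery--Wright via Markov's inequality, exactly as hinted at in the sentence preceding the statement. Writing $g(Y) := f(Y) - \Eb{f(Y)}$, this is again a polynomial of degree at most $q$, so the Carbery--Wright reverse Hölder inequality for polynomials under a log-concave measure on $\R^n$ yields an absolute constant $C$ such that for every integer $k \ge 1$,
$$\bigl(\Eb{|g(Y)|^{k}}\bigr)^{1/k} \;\le\; C\, k^{q}\, \sqrt{\mathrm{Var}[f(Y)]}.$$
The essential feature, and the reason Carbery--Wright is the right tool here, is that the constant $C$ is absolute: it is independent of $n$, of the particular log-concave density, and of $k$.

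Armed with this moment bound I would apply Markov's inequality to $|g(Y)|^{k}$: for any positive integer $k$,
$$\pr{|g(Y)| \ge \lambda} \;\le\; \frac{\Eb{|g(Y)|^{k}}}{\lambda^{k}} \;\le\; \left(\frac{C k^{q} \sqrt{\mathrm{Var}[f(Y)]}}{\lambda}\right)^{\!k}.$$
I would then optimize this in $k$ by taking $k$ to be the integer closest to $\bigl(\lambda / (eC\sqrt{\mathrm{Var}[f(Y)]})\bigr)^{1/q}$, which makes the bracketed ratio at most $1/e$; the resulting tail bound is $e^{-k}$. Setting $R := eC$ converts $e^{-k}$ into $\exp\!\bigl(-(\lambda/(R\sqrt{\mathrm{Var}[f(Y)]}))^{1/q}\bigr)$, and the factor $e^{2}$ in front absorbs both the loss from rounding $k$ to an integer and the trivial regime where the optimal $k$ falls below $1$ (in which case the stated right-hand side is already at least $1$ and the inequality is vacuous).

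The only real obstacle is a bookkeeping one: confirming that the precise form of Carbery--Wright's Theorem 7 delivers the $k^{q}$ scaling of the $L^{k}$ norm claimed above, as opposed to the $k^{q/2}$ scaling one would get from a Kahane--Khintchine or hypercontractivity-style inequality. The exponent $1/q$ in the stated theorem already pins down which of these scalings is used: a $k^{q/2}$ bound would lead to a tail of the form $e^{-(\lambda/\sqrt{\mathrm{Var}})^{2/q}}$ rather than the advertised $e^{-(\lambda/\sqrt{\mathrm{Var}})^{1/q}}$. Once the correct moment inequality is invoked, the remainder is the standard one-parameter optimization of a Markov-type bound, completely analogous to how Chernoff bounds are extracted from moment generating functions.
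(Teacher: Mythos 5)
Your proposal is correct and follows exactly the route the paper has in mind: the paper does not spell out a proof of this theorem at all, stating only that it ``implies the following concentration inequality via application of Markov's inequality,'' and your derivation supplies precisely that missing bookkeeping (apply Carbery--Wright to get $\|f-\Eb{f}\|_{L^k}\lesssim k^{q}\sqrt{\mathrm{Var}[f(Y)]}$, plug into Markov, optimize $k\approx(\lambda/(eC\sqrt{\mathrm{Var}[f(Y)]}))^{1/q}$, and absorb rounding and the trivial regime into the $e^2$ prefactor). The one caveat worth recording is that the absoluteness of $R$ in the final statement hinges on the Carbery--Wright constant having the form $C\,k^{q}$ rather than $(Ck)^{q}$; in the latter case $R$ would pick up a mild $q$-dependence, but this is a feature of the theorem statement as written in the paper, not a gap in your argument.
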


  On one side this inequality is extremely general and allows to study such processes as sampling a point uniformly from the interior of a polytope in $R^n$. On the other side, due to its generality this inequality is weaker than ours even in  a simple case of the sum of $n$ independent exponential random variables ($q=1$). In this case our concentration inequality gives Chernoff bounds like estimates while Theorem \ref{generalTheorem1} provides a much weaker bound. Another drawback of the Theorem \ref{generalTheorem1} that it does not handle discrete distributions.

%%%%%%%%%%%%
\subsection{Paper Outline}

We now outline the rest of this paper.

In Section \ref{sec:centered} we state and prove several lemmas about the moments of ``centered'' polynomials that form the heart of our results. In Section \ref{ProofMainGen} we extend these lemmas to moments of arbitrary polynomials. In Section \ref{ProofMain} we use these Lemmas to prove our main Theorem \ref{main1}. In Section \ref{sec:counting} we prove a counting lemma used in Section \ref{sec:centered}. We prove our permanent Theorems \ref{perm} and \ref{perm1} in Section \ref{sec:perm}. We prove our lower bound Theorem \ref{thm:LB} in Section \ref{tight}. We conclude with examples of moment bounded random variables in Section \ref{sec:exampleMom}.

In Appendix \ref{sec:linear} we prove a special case of our main result: the linear case $q=1$, i.e.\ concentration of a sum of independent moment-bounded random variables. This linear case of our theorem is not new, but the proof nicely illustrates many of our techniques with minimal technical complications. The interested reader may find it helpful to study the special cases in Section \ref{sec:methods} and Appendix \ref{sec:linear} before reading the main body of this paper.

%%%%%%%%%%%%%%%%%%%%%%%%%%%%%%%%%%%%%%%%%%%%%%%%%%%%%%%
\section{Moment Lemma for Centered Polynomials}\label{sec:centered}

The proof of the Theorem \ref{main1} will follow from the application of the Markov's inequality to the upper bound on the $k$-th moment of the polynomial in question. The first step is to look at moments of ``centered'' polynomials that replace $Y_v^{\tau_{hv}}$ with $\left(Y_v^{\tau_{hv}}-\E\left[Y_v^{\tau_{hv}}\right]\right)$.
For simplicity the heart of our analysis will assume that all coefficients $w_h$ are non-negative; negative coefficients will return in Section \ref{ProofMainGen}.

\begin{lemma}[Initial Moment Lemma]\label{lem:momentPrelim}
 We are given a hypergraph $H = ([n], {\cal H})$, $n$ independent moment bounded random variables $Y=(Y_1,\dots, Y_n)$ with the same parameter $L$ and a polynomial
 \[
 g(y)= \sum_{h\in {\cal  H}} w_h\prod_{v\in h}\left(y_v^{\tau_{hv}}-\E\left[Y_v^{\tau_{hv}}\right]\right)
 \]
  with nonnegative coefficients $w_h \ge 0$ such that every monomial (or hyperedge) $h\in {\cal  H}$ has cardinality exactly $\eta$, total power exactly $q$ and maximal power upper bounded by $\Gamma$, i.e.\ $q(h)=q$, $\eta(h)=\eta$ and $\Gamma\ge\max_{v\in h} \tau_{hv}$. It follows that for any integer $k \ge 1$ we have
 \begin{eqnarray}\label{inequality44}
\left|\E\left[g(Y)^k\right]\right| & \le & \max_{{\bar \nu}} \left\{R_2^{qk} L^{qk-\Delta}\cdot \Gamma^{qk-\Delta}\cdot k^{qk-(q-1)\nu_0-\Delta}\cdot \left(\prod_{t=0}^{q}\mu_t^{\nu_t} \right) \right\}
\end{eqnarray}
where $R_2\ge 1$ is some absolute constant, $\Delta = \sum_{t=0}^{q}(q-t)\nu_t$, and the maximum is over all non-negative integers $\nu_t$, $0 \le t \le q$ satisfying $\nu_0 \le \nu_q$, $\sum_{t=0}^q\nu_t=k$ and $qk-(q-1)\nu_0-\Delta\ge 1$ (note also that $\mu_t$ are defined according to (\ref{def:mu}), i.e. they depend on original (not centered) random variables $Y_v$ for $v\in [n]$).
\end{lemma}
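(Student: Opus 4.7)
The plan is to expand $\Eb{g(Y)^k}$ into a sum over $k$-tuples of hyperedges, exploit centering to eliminate many terms, and then organize the survivors by a combinatorial scheme that tracks how the chosen hyperedges overlap. First I expand
\[
\Eb{g(Y)^k} = \sum_{(h_1,\ldots,h_k)\in {\cal H}^k} \Bigl(\prod_{i=1}^k w_{h_i}\Bigr)\cdot \prod_{v\in \bigcup_i \ve{h_i}} \Eb{\prod_{i\,:\, v\in \ve{h_i}}\bigl(Y_v^{\tau_{h_iv}}-\Eb{Y_v^{\tau_{h_iv}}}\bigr)},
\]
using independence to factor across vertices. Because each inner expectation is of a product of \emph{centered} random variables, any vertex $v$ that lies in exactly one of the chosen hyperedges forces the whole term to vanish. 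So the only surviving tuples are those in which every vertex of $\bigcup_i \ve{h_i}$ occurs in at least two hyperedges, paralleling the second-moment computation of Section \ref{sec:methods}.

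For a surviving tuple, I bound each per-vertex factor by expanding the centered product via the triangle inequality $|Y_v^{\tau}-\Eb{Y_v^{\tau}}|\le |Y_v|^{\tau}+\Eb{|Y_v|^{\tau}}$ and then applying moment boundedness. Iterating Definition \ref{var} gives $\Eb{|Y_v|^m}\le mL\cdot \Eb{|Y_v|^{m-1}}$, which I will apply repeatedly to reduce each aggregated vertex moment down to a product of factors of the form $\Eb{|Y_v|^{\tau_{hv}}}$, incurring one multiplicative factor of at most $k\Gamma L$ for each reduction step (since $\tau_{h_iv}\le \Gamma$ and at most $k$ hyperedges meet at $v$). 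Once the vertex factors are brought into this form, they recombine with the $w_h$'s to form precisely the quantities appearing inside the smoothness parameters $\mu_r$ from (\ref{def:mu}).

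The central bookkeeping step is to separate combinatorial shape from vertex labels. I build $(h_1,\ldots,h_k)$ greedily, adding one hyperedge at a time, and classify each addition by the total power $t_i\in\{0,1,\ldots,q\}$ that $h_i$ shares with the previously-added union: so $h_i$ meets $\bigcup_{j<i}\ve{h_j}$ in some sub-hyperedge of total power $t_i$, and contributes $q-t_i$ fresh powers. By the definition of $\mu_{t_i}$ as a supremum over extensions $h\extends h_0$, the sum of $w_h\prod_{v\in \ve{h}\setminus\ve{h_0}}\Eb{|Y_v^{\tau_{hv}}|}$ incurred in placing $h_i$ is bounded by $\mu_{t_i}$. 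Letting $\nu_t$ count the indices $i$ with $t_i=t$, the total contribution collapses to $\prod_{t=0}^{q}\mu_t^{\nu_t}$, the total power in the union is $\Delta=\sum_t(q-t)\nu_t$, and $\nu_0$ equals the number of connected components of the hyperedge incidence pattern. The survival condition that each vertex appears in at least two hyperedges then constrains the profile $(\nu_0,\ldots,\nu_q)$; it in particular yields $\nu_0\le\nu_q$, because every fresh power introduced by a component's root must eventually be revisited, and the extremal way to revisit exhausted components is by full overlaps with $t=q$.

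What remains is to count the number of ordered hyperedge-tuples with a given profile, allowing for embeddings of the abstract vertex set into $[n]$ and for the moment-boundedness factors accumulated above. This is exactly the content of the counting lemma proved in Section \ref{sec:counting}; feeding its bound into the charging yields the exponent $qk-(q-1)\nu_0-\Delta$ on $k$ and the exponent $qk-\Delta$ on $L\Gamma$, with residual absolute constants absorbed into $R_2^{qk}$. Maximizing over admissible $(\nu_0,\ldots,\nu_q,\Delta)$ gives (\ref{inequality44}). The delicate point I expect to be hardest is the exponent of $k$: a naive accounting charges $m\le k\Gamma$ per application of moment boundedness and would produce $k^{qk}$, which is too large. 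The saving of $(q-1)\nu_0$ reflects the fact that the $\nu_0$ ``root'' additions do not trigger any moment-boundedness reduction (their contribution is absorbed into $\mu_q$ wholesale), and correctly amortizing these savings hyperedge-by-hyperedge is the main technical obstacle.
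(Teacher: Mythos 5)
Your proposal follows essentially the same roadmap as the paper: expand $\Eb{g(Y)^k}$ over $k$-tuples of hyperedges, use centering to restrict to tuples where every vertex appears in at least two hyperedges, separate combinatorial shape from vertex identities, charge $\mu_t$'s by building (or dismantling) the hypergraph hyperedge-by-hyperedge, and close via the counting lemma of Section~\ref{sec:counting}. The paper removes hyperedges in a carefully chosen order (Lemma~\ref{lem:ordering}) while you add them greedily, but these are the same process run in opposite directions. The conceptual ideas you identify, including that $\nu_0$ should equal the number $c$ of connected components, are the right ones.

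That said, three steps are asserted but not established, and they are not cosmetic.
First, $\nu_0=c$ does not hold for an arbitrary greedy order: if you add two hyperedges from the same component that are disjoint and only later add the hyperedge joining them, that component contributes more than one index with $t_i=0$. You need the existence of an ordering that never splits a component, which is exactly what the paper proves via a DFS on the line graph (Lemma~\ref{lem:ordering}); your "greedy addition" must be made order-aware to obtain this. Similarly, $\nu_0\le\nu_q$ relies on $\nu_0=c$ together with the observation that each component's last hyperedge is fully overlapped; neither is automatic for a generic order.
Second, the per-vertex expectation $\Eb{\prod_{h\ni v}(Y_v^{\tau_{hv}}-\Eb{Y_v^{\tau_{hv}}})}$ is not bounded by applying the triangle inequality to each factor and then moment-boundedness: you are bounding the expectation of a \emph{product}, and passing absolute values inside products does not decouple them. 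The paper's Lemma~\ref{lem:momBound} first applies H\"older's inequality to reduce to a single power of $|Z|$, then uses convexity and Jensen; without this the bound $\frac{2^{d_v}L^{D_v-\delta_v}D_v!}{\delta_v!}\Eb{|Y_v^{\delta_v}|}$ that the rest of the argument consumes is not available.
Third, and most materially, charging "at most $k\Gamma L$" per reduction step discards the factorial structure and produces $k^{qk}$, which you yourself flag as too large. The paper instead retains the exact product $\prod_v D_v!/\delta_v!$ and proves the counting Lemma~\ref{MainCount} for the quantity $|{\cal S}(k,\ell,c,\bar d,\bar D,\bar\delta)|\prod_v D_v!/\delta_v!$ \emph{jointly} --- the saving of $k^{(q-1)\nu_0}$ comes out of that joint bound, via the component-based analysis of Lemma~\ref{lem:ziq1} and the optimization in Lemma~\ref{count}. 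If you replace $D_v!/\delta_v!$ by $(k\Gamma)^{D_v-\delta_v}$ before invoking the counting lemma, there is nothing left for the counting lemma to save. So the "amortization" you correctly identify as the crux is not something the existing counting lemma can supply after the fact: you must carry the factorials through to it.
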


\begin{proof}
Fix hypergraph $H=([n],{\cal H})$, random variables $Y=(Y_1,\dots,Y_n)$, non-negative weights $\{w_h\}_{h \in {\cal H}}$, integer $k\ge 1$, cardinality $\eta$ and total power $q$.
 Without loss of generality we assume that ${\cal H}$ is the complete hypergraph (setting additional edge weights to 0 as needed), i.e.\ ${\cal H}$ includes every possible hyperedge over vertex set $[n]$ with cardinality $\eta$, total power $q$, and maximal power at most $\Gamma$. A \emph{labeled} hypergraph $G=({\cal V}(G), {\cal H}(G))$ consists of a set of vertices ${\cal V}(G)$ and a \emph{sequence} of $k$ (not necessarily distinct) hyperedges ${\cal H}(G)=h_1,\dots,h_k$. In other words a labeled hypergraph is a hypergraph whose $k$ hyperedges are given unique labels from $[k]$.
We write e.g.\ $\prod_{h \in {\cal H}(G)} w_h$ as a shorthand for $\prod_{i=1}^k w_{h_i}$ where ${\cal H}(G)=h_1,\dots,h_k$; in particular duplicate hyperedges count multiple times in such a product.

Consider the sequence of hyperedges $h_1,\dots,h_k\in {\cal  H}$ from our original hypergraph $H$. These hyperedges define a labeled hypergraph $H(h_1,\dots,h_k)$ with vertex set $\cup_{i=1}^k {\cal V}(h_i)$ and hyperedge sequence $h_1,\dots,h_k$.
%Recall that each hyperedge $h$ has a weight $\tau_{hv}$  associated with each vertex $v\in h$ and $\sum_{v\in h}\tau_{hv}=q$.
Note that the vertices of $H(h_1,\dots,h_k)$ are labeled by the indices from $[n]$ and the edges are labeled by the indices from $[k]$.
Note also that some hyperedges in $H(h_1,\dots,h_k)$ could span the same set of vertices and have the same power vector, i.e.\ they are multiple copies of the same hyperedge in the original hypergraph $H$.
 Let ${\cal P}(H,k)$ be the set of all such edge and vertex labeled hypergraphs that can be generated by any $k$ hyperedges from $H$.
 We say that the \emph{degree} of a vertex (in a hypergraph) is the number of hyperedges it appears in.
Let ${\cal P}_2(H,k)\subseteq {\cal P}(H,k)$ be the set of such labeled hypergraphs where each vertex has degree at least two.
We split the whole proof into more digestible pieces by subsections.

%%%%%%%%%%%%%%%%%%%%%%%%%%%%%%%%%%%%%% sec 2.1
\subsection{Changing the vertex labeling}

In this section we will show how to transform the formula for the $k$-th moment to have the summation over the hypergraphs that have its own set of labels instead of being labeled by the set $[n]$. Let $X_{hv}=Y_v^{\tau_{hv}} - \Eb{Y_v^{\tau_{hv}}}$ for $h\in {\cal  H}$ and $v\in h$.
By linearity of expectation, independence of random variables $X_{hv}$ for different vertices $v\in {\cal V}$ and definition of ${\cal P}(H,k)$ we obtain
\begin{eqnarray}
\left|\E\left[g(Y)^k\right]\right|
&=&\left|\sum_{h_1,\dots,h_k\in {\cal  H}}\E\left[\prod_{i=1}^k\left(w_{h_i}\prod_{v\in \ve{h_i}}X_{h_iv}\right)\right]\right|\nonumber \\
&=&\left|\sum_{G\in  {\cal P}(H,k)}\E\left[\prod_{h\in {\cal H}(G)}\left(w_{h}\prod_{v\in \ve{h}}X_{hv}\right)\right]\right|\nonumber \\
&=&\left|\sum_{G\in  {\cal P}(H,k)}\left(\prod_{h\in {\cal H}(G)}w_{h} \right)\left(\prod_{v\in {\cal V}(G)}\E\left[\prod_{h\in {\cal H}(G)|v\in \ve{h}}X_{hv}\right]\right)\right| \nonumber \\
&=&\left|\sum_{G\in  {\cal P}_2(H,k)}\left(\prod_{h\in {\cal H}(G)}w_{h} \right)\left(\prod_{v\in {\cal V}(G)}\E\left[\prod_{h\in {\cal H}(G)|v\in \ve{h}}X_{hv}\right]\right) \right| \nonumber \\
&\le &\sum_{G\in  {\cal P}_2(H,k)}\left(\prod_{h\in {\cal H}(G)}w_{h} \right)\left(\prod_{v\in {\cal V}(G)}\left|\E\left[\prod_{h\in {\cal H}(G)|v\in \ve{h}}X_{hv}\right]\right|\right) , \label{inequality1}
\end{eqnarray}
where the last equality follows from the fact that $\E[X_{hv}]=0$  for all $h\in {\cal H}$ and $v\in \ve{h}$.
Below we will use the notation $\Lambda_v(G)=\left|\E\left[\prod_{h\in {\cal H}(G)|v\in \ve{h}}X_{hv}\right]\right|$.

Note that a labeled hypergraph $G\in {\cal P}_2(H,k)$ could have the number of vertices ranging from $\eta$ up to $k\eta/2$ since every vertex has degree at least two. For $q$, $\eta$ and $\Gamma$ clear from context, let ${\cal S}_2(k,\ell)$ be the set of labeled hypergraphs
with vertex set $[\ell]$ having $k$ hyperedges such that each hyperedge has cardinality exactly $\eta$, total power $q$, maximal power $\le \Gamma$, and every vertex has degree at least 2. For each hypergraph $G\in {\cal S}_2(k,\ell)$ the vertices are labeled by the indices from the set $[\ell]$ and the edges are labeled by the indices from the set $[k]$.
  Let $M(S)$ for $S\subseteq [\ell]$ be the set of all possible injective functions $\pi:S\rightarrow [n]$, in particular
  $M([\ell])$ is the set of all possible injective functions $\pi:[\ell]\rightarrow [n]$. We will use the notation $\pi(h)$ for a copy of hyperedge $h = ({\cal V}(h), \tau(h))\in {\cal H}(G)$ with its vertices relabeled by injective function $\pi$, i.e.\ ${\cal V}(\pi(h))=\{\pi(v) : v \in {\cal V}(h)\}$ and $\tau_{\pi(h),\pi(v)} = \tau_{hv}$. Analogously we will use notation $\pi(G)$ to denote the graph $G$ with vertices re-labeled according to function $\pi$. We claim that
\begin{eqnarray}
\lefteqn{\sum_{G\in  {\cal P}_2(H,k)}\left(\prod_{h\in {\cal H}(G)}w_{h} \right)\left(\prod_{v\in {\cal V}(G)}\Lambda_v(G)\right)} \nonumber \\
&=& \sum_{\ell=\eta}^{k\eta/2}\frac{1}{\ell!}\sum_{G'\in {\cal S}_2(k,\ell)}\sum_{\pi\in M([\ell])} \left(\prod_{h\in {\cal H}(G')}w_{\pi(h)} \right)\left(\prod_{u\in {\cal V}(G')}\Lambda_{\pi(u)}(\pi(G'))\right). \label{inequality2}
\end{eqnarray}
Indeed, every labeled hypergraph $G=({\cal V}(G),{\cal H}(G))\in  {\cal P}_2(H,k)$ on $\ell$ vertices has $\ell!$ labeled hypergraphs $G'=({\cal V}(G'),{\cal H}(G'))\in {\cal S}_2(k,\ell)$ that differ from $G$ by vertex labellings only. Each of those hypergraphs has one corresponding mapping $\pi$ that maps its $\ell$ vertex labels into vertex labels of hypergraph $G\in {\cal P}_2(H,k)$.

Then, combining (\ref{inequality1}) and (\ref{inequality2}) we obtain
\begin{eqnarray}\label{inequality3}
\left|\E\left[g(Y)^k\right]\right|\le \sum_{\ell=\eta}^{k\eta/2}\frac{1}{\ell!}\sum_{G'\in {\cal S}_2(k,\ell)} \sum_{\pi\in M([\ell])} \left(\prod_{h\in {\cal H}(G')}w_{\pi(h)} \right)\left(\prod_{u\in {\cal V}(G')}\Lambda_{\pi(u)}(\pi(G')) \right).
\end{eqnarray}

%%%%%%%%%%%%%%%%%%%% sec 2.2
\subsection{Estimating the term for each hypergraph $G'$}\label{sec:eachGraph}

We now fix integer $\ell$ and labeled hypergraph $G'\in {\cal S}_2(k,\ell)$.
Let $c$ be the number of \emph{connected components} in $G'$, i.e.\ $c$ is a maximal number such that the vertex set ${\cal V}(G')$ can be partitioned into $c$ parts ${\cal V}_1,\dots,{\cal V}_c$ such that for each hyperedge $h\in {\cal H}(G')$ and any $j\in [c]$ if $\ve{h}\cap {\cal V}_j\neq \emptyset$ then $\ve{h}\subseteq {\cal V}_j$. Intuitively, we can split the vertex set of $G'$ into $c$  components such that there are no hyperedges that have vertices in two or more components.
For each vertex $v\in {\cal V}(G')$, we define $D_v=\sum_{h\in {\cal H}(G')|v\in \ve{h}}\tau_{hv}$  to be the sum of all the powers that correspond to the vertex $v$ and hyperedges that are incident to $v$. We call $D_v$ the \emph{total power} of $v$. Let $d_v$ denote the number of hyperedges $h \in {\cal H}(G')$ with $v \in \ve{h}$. We will call $d_v$ the \emph{degree} of the vertex $v$. By definitions
$\sum_{v\in {\cal V}(G')}d_v=\eta k$, $\sum_{v\in {\cal V}(G')}D_v=q k$ and $d_v\ge 2$ for all $v\in {\cal V}(G')$.

We consider a certain canonical ordering $h^{(1)},\dots,h^{(k)}$ of the hyperedges in ${\cal H}(G')$ that will be specified later in Lemma \ref{lem:ordering}. (This ordering is distinct from and should not be confused with the ordering of the hyperedges inherent in a labeled hypergraph.)
We iteratively remove hyperedges from the hypergraph $G'$ in this order. Let $G'_s=({\cal V}_s',{\cal H}_s')$ be the hypergraph defined by the hyperedges ${\cal H}_s'=h^{(s)},\dots,h^{(k)}$ and vertex set ${\cal V}_s'=\cup_{h \in {\cal H}_s'} \ve{h}$. In particular $G'_1$ is identical to $G'$ except for the order of the hyperedges.
 Let $V_s$ be the vertices of the hyperedge $h^{(s)}$ that have degree one in the hypergraph $G'_s$, i.e.\ ${\cal V}_{s+1}'={\cal V}_s'\setminus V_s$.  By definition, $0\le |V_s|\le \eta$.   For each vertex $v\in V_s$, let $\delta_v=\tau_{h^{(s)}v}$, i.e.\ $\delta_v$ is the power corresponding to the vertex $v$ and the last hyperedge in the defined order that is incident to $v$. We call $\delta_v$ the \emph{last power} of $v$. Intuitively, we delete edges in the order $h^{(1)},\dots,h^{(k)}$. We also delete all vertices that become isolated. Then $V_s$ is the set of vertices that get deleted during step $s$ of this process.

Recall $[\ell]= {\cal V}(G')$ and ${\cal V}_s'={\cal V}(G')\setminus \cup_{t=1}^{s-1} V_t$ for $s=1,\dots,k$.    Then
\begin{eqnarray}
&&\sum_{\pi\in M({\cal V}_s')} \left(\prod_{h\in {\cal H}_s'}w_{\pi(h)} \right)\left(\prod_{v\in {\cal V}_s'}\Lambda_{\pi(v)}(\pi(G'))\right) = \nonumber \\
&& \sum_{\pi'\in M({\cal V}_{s+1}')} \sum_{\substack{\pi\in M({\cal V}_s') \\ \text{s.t. } \pi \text{ extends } \pi'}} \left(\prod_{h\in {\cal H}_{s+1}'} w_{\pi(h)} \right)\left(\prod_{v\in {\cal V}_{s+1}'}\Lambda_{\pi(v)}(\pi(G'))\right)\ \left(w_{\pi(h^{(s)})}\prod_{v\in V_s} \Lambda_{\pi(v)} (\pi(G'))\right) = \nonumber \\
&& \sum_{\pi'\in M({\cal V}_{s+1}')}\left[ \left(\prod_{h\in {\cal H}_{s+1}'}w_{\pi'(h)} \right)\left(\prod_{v\in {\cal V}_{s+1}'}\Lambda_{\pi'(v)}(\pi(G'))\right) \sum_{\substack{\pi\in M({\cal V}_s') \\ \text{s.t. } \pi \text{ extends } \pi'}}\left(w_{\pi(h^{(s)})}\prod_{v\in V_s} \Lambda_{\pi(v)}(\pi(G')) \right)\right]  \nonumber
\end{eqnarray}
where we say that $\pi$ \emph{extends} $\pi'$ if $\pi(v)=\pi'(v)$ for every $v$ in the domain of $\pi'$. By Lemma \ref{lem:momBound} which is an implication of the moment boundness of random variables (Section \ref{sec:technical}) we have
\begin{eqnarray*}
\Lambda_{\pi(v)}(\pi(G'))&\le& \frac{2^{d_{\pi(v)}}L^{D_{\pi(v)}-\tau_{\pi\left(h^{(s)}\right)\pi(v)}}\cdot D_{\pi(v)}!\cdot
\E\left[\left|Y_{\pi(v)}^{\tau_{\pi\left(h^{(s)}\right)\pi(v)}}\right|\right]}{\tau_{\pi\left(h^{(s)}\right)\pi(v)}!} \\
&=&\frac{2^{d_v}L^{D_{v}-\delta_v}\cdot D_{v}!\cdot \E\left[\left|Y_{\pi(v)}^{\tau_{\pi\left(h^{(s)}\right)\pi(v)}}\right|\right]}{\delta_v!}
\end{eqnarray*}
since $\delta_v=\tau_{\pi\left((h^{(s)}\right)\pi(v)}$ (the vertex degrees and powers do not depend on vertex labeling).
Therefore,
\begin{eqnarray*}
\sum_{\substack{\pi\in M({\cal V}_s') \\ \text{s.t. } \pi \text{ extends } \pi'}}&& \left(w_{\pi(h^{(s)})}\prod_{v\in V_s} \Lambda_{\pi(v)}(\pi(G')) \right)
\le \\
&&\prod_{v\in V_s} \frac{2^{d_v} L^{D_{v}-\delta_v}\cdot D_{v}!}{\delta_v!}
\cdot
\sum_{\substack{\pi\in M({\cal V}_s') \\ \text{s.t. } \pi \text{ extends } \pi'}}\left(w_{\pi\left(h^{(s)}\right)}\prod_{v\in V_s}
\E\left[\left|Y_{\pi(v)}^{\tau_{\pi\left(h^{(s)}\right)\pi(v)}}\right|\right]\right).
\end{eqnarray*}
 We now group the sum over $\pi$ by the value of $\pi(h^{(s)}) \equiv h \in {\cal H}$. Note that for any fixed mapping $\pi'\in M({\cal V}_{s+1}')$ there are at most $|V_s|!$ possible mappings $\pi \in M({\cal V}_s')$ that extend $\pi'$ and map the vertex labels of hyperedge $h^{(s)}\in G'$ into vertex labels of the hyperedge $h\in {\cal H}$.
 Let $h^{(s)} \setminus V_s$ denote $h^{(s)}$ but with vertices $V_s$ removed, i.e.\ ${\cal V}(h^{(s)} \setminus V_s) = {\cal V}(h^{(s)}) \setminus V_s$ and $\tau_{h^{(s)} \setminus V_s,v} = \tau_{h^{(s)}v}$ for all $v \in {\cal V}(h^{(s)} \setminus V_s)$.
  Let $h'=\pi'(h^{(s)} \setminus V_s)$, which is the portion of $\pi(h^{(s)})$ that is fixed by $\pi'$.
 Recall, we write $h \extends h'$ if ${\cal V}(h) \supseteq {\cal V}(h')$ and $\tau_{hv}=\tau_{h'v}$ for all $v\in {\cal V}(h')$. Also recall the notation $\delta_v=\tau_{h^{(s)}v}$ for $v\in V_s$. Then
\begin{eqnarray*}
\sum_{\substack{\pi\in M({\cal V}_s') \\ \text{s.t. } \pi \text{ extends } \pi'}} w_{\pi(h^{(s)})}\prod_{v\in V_s} \E\left[\left|Y_{\pi(v)}^{\tau_{\pi\left(h^{(s)}\right)\pi(v)}}\right|\right]
&\le& |V_s|! \sum_{h \in {\cal H}|h \extends h'} w_{h} \prod_{u\in \ve{h} \setminus \ve{h'}} \E\left[\left|Y_u^{\tau_{hu}}\right|\right]\\
&\le& |V_s|! \max_{h'|q(h')=q-\sum_{v\in V_s} \delta_v}\left\{\sum_{h \in {\cal H}|h \extends h'} w_h\prod_{v\in \ve{h}\setminus \ve{h'}}\E[|Y_v^{\tau_{hv}}|]\right\}\\
& =& |V_s|! \mu_{q-\sum_{v\in V_s}\delta_v}.
\end{eqnarray*}
We repeat the argument for $s=1,\dots,k$. In the end we obtain
\begin{eqnarray} \label{estimate}
\sum_{\pi\in M([\ell])} \left(\prod_{h\in {\cal H}(\pi(G'))}w_{\pi(h)} \right)\left(\prod_{v\in {\cal V}(G')}\Lambda_{\pi(v)}(\pi(G'))\right) &=&  \nonumber \\
\sum_{\pi\in M([\ell])} \left(\prod_{h\in {\cal H}_1'}w_{\pi(h)} \right)\left(\prod_{v\in {\cal V}_1'}\Lambda_{\pi(v)}(G')\right)&\le& \nonumber \\
 \left(\prod_{v\in {\cal V}(G')} \frac{2^{d_v} L^{D_{v}-\delta_v}\cdot D_{v}!}{\delta_v!}\right)\prod_{s=1}^k \left(|V_s|!  \mu_{q-\sum_{v\in V_s}\delta_v} \right)&=& \nonumber\\
 2^{\eta k} L^{qk-\Delta}\left(\prod_{v\in {\cal V}(G')} \frac{D_{v}!}{\delta_v!}\right)\left(\prod_{s=1}^k |V_s|!  \right)\prod_{t=0}^{q}\mu_t^{\nu_t} &\le&  \nonumber\\
2^{\eta k} L^{qk-\Delta}\left(\prod_{v\in {\cal V}(G')} \frac{D_{v}!}{\delta_v!}\right) \eta^\ell \cdot \prod_{t=0}^{q}\mu_t^{\nu_t}
\end{eqnarray}
where $\nu_t$ is the number of indices $s=1,\dots,k$ with $q-\sum_{v\in V_s}\delta_v=t$, $\mu_t = \mu_t(w,Y)$, and $\Delta=\sum_{v\in {\cal V}(G')}\delta_v$. In the last inequality we used the fact that $\sum_{s=1}^k|V_s|=\ell$ and $|V_s|\le \eta$.
The quantities $\nu_t$ must satisfy the equality $\sum_{t=0}^{q} (q-t)\nu_t = \sum_{v\in {\cal V}(G')}\delta_v=\Delta$.

The flexibility in the choice of the ordering $h^{(1)},\dots,h^{(k)}$ affects the quality of the bound (\ref{estimate}) via its influence on the $\nu_t$, $\delta_v$ and $\Delta$. We focus on minimizing $\nu_0$, which intuitively makes sense as $\mu_0$ is often much larger than the other $\mu_t$. The last hyperedge in each of the $c$ connected components must contribute to $\nu_0$, so $\nu_0 \ge c$. It turns out that equality $\nu_0=c$ is achievable; the intuition is to pick an ordering that never splits a connected component of $G'_s$ into several components. We defer the proof that such an ordering exists to Lemma \ref{lem:ordering} in Section \ref{sec:ordering}.
We also know that $\nu_q\ge c$ because the first hyperedge in each connected component is incident to vertices of degree two or more only and therefore contributes to $\nu_q$.

%%%%%%%%%%%%%%%%%%%%%%%
\subsection{Using the Counting Lemma}\label{sec:useCount}
We assume that each hypergraph in ${\cal S}_2(k,\ell)$ has an associated canonical ordering of hyperedges, formally defined in Lemma \ref{lem:ordering}. This canonical ordering specifies the last powers for all vertices in the hypergraph and the values $\nu_t$ for $t=0,\dots, q$.

We decompose ${\cal S}_2(k,\ell)$ as ${\cal S}_2(k,\ell) = \bigcup_{c,\bar{d} \ge \bar{2}, \bar{D}, \bar{\delta}}{\cal S}(k,\ell, c, \bar{d}, \bar{D}, \bar{\delta})$ where $\bar{2}$ is a vector of $\ell$ twos and ${\cal S}(k, \ell,c,\bar{d},\bar{D},\bar{\delta})$ is the set of vertex and hyperedge labeled  hypergraphs with vertex set $[\ell]$ and $k$ hyperedges such that each hyperedge has cardinality $\eta$, total power $q$,  maximal power $\le \Gamma$, the number of connected components is $c$, the degree vector is $\bar{d}$, the total power vector is $\bar{D}$, and $\bar{\delta}$ is the vector of last powers (corresponding to the canonical ordering). Note that ${\cal S}(k, \ell, c, \bar{d}, \bar{D}, \bar{\delta})$ depends on $q$, $\eta$ and $\Gamma$ as well.
Let ${\bar \nu}=(\nu_0,\dots,\nu_q)$. Combining, (\ref{inequality3}) and (\ref{estimate}) we obtain
\begin{eqnarray}
&& \left|\E\left[g(Y)^k\right]\right|
 \le  \sum_{\ell=\eta}^{k\eta/2}\frac{1}{\ell!}\sum_{c=1}^{\ell/\eta}\sum_{\bar{d}\ge \bar{2},\bar{D},\bar{\delta}} \sum_{G'\in {\cal S}(k,\ell,c,\bar{d},\bar{D},\bar{\delta})} 2^{\eta k} L^{qk-\Delta} \left(\prod_{t=0}^{q}\mu_t^{\nu_t} \right) \eta^{\ell} \prod_{v \in [\ell]} \frac{D_{v}!}{\delta_v!} \nonumber \\
%& \le &  \sum_{\ell=2}^{k\eta/2}\frac{\eta^{\ell}L^{qk-\Delta}}{\ell!}\sum_{c=1}^{\ell/\eta}\max_{{\bar \nu}:\nu_0=c}\left\{\sum_{\bar{d}\ge \bar{2},\bar{D}, \bar{\delta}} | {\cal S}(\ell,c,\bar{d},\bar{D},\bar{\delta})|\left(\prod_{t=0}^{q}\mu_t^{\nu_t} \right) \prod_v \frac{D_{v}!}{\delta_v!} \right\}\nonumber \\
&& \le  \max_{\ell,c,\bar{d}\ge \bar{2}, \bar{D},\bar{\delta},{\bar \nu}}\left\{ \frac{k\eta}{2}\cdot \frac{1}{\ell!} \cdot\frac{\ell}{\eta}\cdot (2^{q k + \ell})^3 \cdot  |{\cal S}(k,\ell,c,\bar{d},\bar{D},\bar{\delta})| \cdot 2^{\eta k} L^{qk-\Delta}\left(\prod_{t=0}^{q}\mu_t^{\nu_t} \right)\eta^{\ell} \prod_v \frac{D_{v}!}{\delta_v!} \right\} \nonumber \\
&& \le   \max_{\ell,c,\bar{d}\ge \bar{2}, \bar{D},\bar{\delta},{\bar \nu}} \Bigg\{\frac{k\ell}{2\cdot \ell !}\cdot 2^{3(qk + \ell)} \cdot 2^{\eta k} L^{qk-\Delta} \cdot \eta^{\ell} \cdot \underbrace{R_0^{qk}\cdot \Gamma^{qk-\Delta-\ell}\cdot k^{qk-(q-1)c-\Delta+\ell}}_{\substack{|{\cal S}(k,\ell,c,\bar{d},\bar{D},\bar{\delta})|\prod_v \frac{D_{v}!}{\delta_v!} ~\le~ \text{this}\\ \text{by counting Lemma \ref{MainCount} (Section \ref{sec:counting})}}}\cdot \left(\prod_{t=0}^{q}\mu_t^{\nu_t} \right) \Bigg\}\nonumber
\end{eqnarray}
where the maximum over $\bar{\nu}$ is over $\nu_0,\dots,\nu_q \ge 0$ with $c=\nu_0 \le \nu_q$, $\sum_{t=0}^q \nu_t = k$ and $\sum_{t=0}^q (q-t)\nu_t = \Delta$. Also the integers $\nu_0,\dots,\nu_q$ must satisfy the inequality
$(q-1)k-\Delta\ge (q-2)\nu_0$ by Corollary \ref{Cor_Ineq}.
The second inequality follows from the fact that the total number of feasible total power vectors $\bar{D}$ is at most $2^{q k+\ell}$ ($q k$ is the sum of all the powers and we need to compute the total number of partitions of the array with $q k$ entries into $\ell$ possible groups of consecutive entries which is ${q k+\ell-1 \choose \ell-1}$), and similarly the number of vectors $\bar{d}$ and $\bar{\delta}$ can also be upper-bounded by $2^{q k+\ell}$.
We substitute $\nu_0$ for $c$, and remove the unreferenced variables $c$, $\bar{d}$, $\bar{D}$ and $\bar{\delta}$ from the maximum. The maximum over $\bar{\nu}$ is now over $\nu_0,\dots,\nu_q \ge 0$ with $\nu_0 \le \nu_q$, $\sum_{t=0}^q \nu_t = k$ such that $(q-1)k-\Delta\ge (q-2)\nu_0$. We continue
\begin{eqnarray}
\left|\E\left[g(Y)^k\right]\right|
& \le &  \max_{\ell,{\bar \nu}} \left\{\frac{k\ell}{2\cdot \ell !}\cdot 2^{3(qk + \ell)} \cdot 2^{\eta k} L^{qk-\Delta} \cdot \eta^{\ell} \cdot R_0^{qk}\cdot \Gamma^{qk-\Delta-\ell}\cdot k^{qk-(q-1)\nu_0-\Delta+\ell}\cdot \left(\prod_{t=0}^{q}\mu_t^{\nu_t} \right) \right\}\nonumber \\
& \le &  \max_{\ell,{\bar \nu}} \left\{R_1^{qk} \cdot \left(\frac{\eta}{\ell}\right)^{\ell}\cdot L^{qk-\Delta}\cdot \Gamma^{qk-\Delta}\cdot k^{qk-(q-1)\nu_0-\Delta+\ell}\cdot \left(\prod_{t=0}^{q}\mu_t^{\nu_t} \right) \right\}\nonumber \\
& = &  \max_{\ell,{\bar \nu}} \left\{R_1^{qk} \cdot \left(\frac{\eta k}{\ell}\right)^{\ell} \cdot L^{qk-\Delta}\cdot \Gamma^{qk-\Delta}\cdot k^{qk-(q-1)\nu_0-\Delta}\cdot \left(\prod_{t=0}^{q}\mu_t^{\nu_t} \right) \right\}\nonumber \\
& \le &  \max_{{\bar \nu}} \left\{R_2^{qk} L^{qk-\Delta}\cdot \Gamma^{qk-\Delta}\cdot k^{qk-(q-1)\nu_0-\Delta}\cdot \left(\prod_{t=0}^{q}\mu_t^{\nu_t} \right) \right\}\label{UsingCount}
\end{eqnarray}
where $R_0<R_1<R_2$ are some absolute constants, the second inequality uses the facts that $\ell!\ge (\ell/e)^\ell$ and $\Gamma^{-\ell} \le 1$, and the last inequality is implied by the fact that
\[
\left(\frac{k\eta}{\ell}\right)^{\ell} \le \max_{x>0} \left(\frac{k\eta}{x}\right)^{x} = e^{k\eta/e}.
\]
Inequality (\ref{UsingCount}) is precisely the inequality (\ref{inequality44}) that we needed to prove. Note that the inequality $(q-1)k-\Delta\ge (q-2)\nu_0$ implies
$qk-(q-1)\nu_0-\Delta\ge k-\nu_0=k-c\ge c\ge 1$ (we use the fact that each connected component has at least two hyperedges).
\end{proof}

%%%%%%%%%%%%%%%%%%%%%
\subsection{Intermediate moment lemma}

\begin{lemma}[Intermediate Moment Lemma]\label{lem:moment}
We are given $n$ independent moment bounded random variables $Y=(Y_1,\dots, Y_n)$ with the same parameter $L$ and a general polynomial $f(x)$ with nonnegative coefficients such that every monomial (or hyperedge) $h\in {\cal  H}$ has  exactly $\eta$ variables, total power exactly $q$ and power of any variable upper bounded by $\Gamma$, i.e.\ $q(h)=q$, $\eta(h)=\eta$ and $\Gamma=\max_{h\in {\cal  H}, v\in h} \tau_{hv}$. Then
 \begin{eqnarray}\label{inequality4}
\left|\E\left[g(Y)^k\right]\right| & \le & \max \left\{ \left(\sqrt{kR_3^q\Gamma^q L^q \mu_q\mu_0}\right)^{k}, \max_{t\in [q]}(k^{t}R_3^qL^t\Gamma^t\mu_t)^{k}\right\}.
\end{eqnarray}
where $R_3\ge 1$ is some absolute constant, $g(Y)$ is a polynomial of centered random variables defined in Lemma \ref{lem:momentPrelim} and $[q]=\{1,\dots,q\}$.
\end{lemma}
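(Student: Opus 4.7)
The plan is to apply Lemma \ref{lem:momentPrelim} and reduce the maximization over $(\bar\nu,\Delta)$ appearing there to the two clean forms in the statement via a short algebraic rearrangement. First, I would use the identity $\sum_{t=0}^{q} t\nu_t = qk - \Delta$, which is immediate from $\sum_t \nu_t = k$ and $\Delta = \sum_t (q-t)\nu_t$, to rewrite the exponents in (\ref{inequality44}) purely in terms of $\bar\nu$. The exponent of $L$ and of $\Gamma$ become $\sum_{t=1}^{q} t\nu_t$, while the exponent of $k$ becomes $\sum_{t=1}^{q} t\nu_t - (q-1)\nu_0$. Collecting factors by index $t$ gives
$$
\bigl|\E[g(Y)^k]\bigr| \;\le\; R_2^{qk}\;\mu_0^{\nu_0}\,k^{-(q-1)\nu_0}\prod_{t=1}^{q}\bigl(k^t L^t \Gamma^t \mu_t\bigr)^{\nu_t}.
$$

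Introduce the shorthand $A_t := k^t L^t \Gamma^t \mu_t$ for $t \in [q]$ and $B := \sqrt{k L^q \Gamma^q \mu_0 \mu_q}$, so that $B^2 = \mu_0 \cdot A_q \cdot k^{-(q-1)}$. The key step is the pairing trick: since Lemma \ref{lem:momentPrelim} guarantees $\nu_0 \le \nu_q$, I can pair each of the $\nu_0$ copies of $\mu_0 \cdot k^{-(q-1)}$ with one copy of $A_q$ (borrowed from the $A_q^{\nu_q}$ factor), obtaining
$$
\mu_0^{\nu_0}\,k^{-(q-1)\nu_0}\,A_q^{\nu_q} \;=\; B^{2\nu_0}\,A_q^{\nu_q - \nu_0}.
$$
Substituting, the bound becomes $R_2^{qk}\, B^{2\nu_0} A_q^{\nu_q-\nu_0} \prod_{t=1}^{q-1} A_t^{\nu_t}$, and the exponents now satisfy
$$2\nu_0 + (\nu_q - \nu_0) + \sum_{t=1}^{q-1}\nu_t \;=\; \sum_{t=0}^{q}\nu_t \;=\; k.$$

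Finally, a product of nonnegative factors with exponents summing to $k$ is at most the largest base raised to the $k$-th power, so
$$
\bigl|\E[g(Y)^k]\bigr| \;\le\; R_2^{qk}\,\max\!\Bigl\{B^k,\;\max_{t\in[q]} A_t^k\Bigr\}.
$$
For the $A_t$ terms this already matches $(k^t R_2^q L^t \Gamma^t \mu_t)^k$, while for $B$ we get $(R_2^{2q} \cdot k L^q \Gamma^q \mu_0 \mu_q)^{k/2}$; redefining the absolute constant $R_2$ to absorb the factor $R_2^{2q}$ (e.g.\ replacing it by $R_2^2$) yields exactly the target (\ref{inequality4}).

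The only genuinely nontrivial step is the pairing, which is precisely where the constraint $\nu_0 \le \nu_q$ from Lemma \ref{lem:momentPrelim} is used: without it one would be left with unpaired factors of $\mu_0\, k^{-(q-1)}$ that do not correspond to any term allowed in the final bound. Everything else is bookkeeping and a one-line application of the ``weighted geometric mean $\le$ max'' inequality.
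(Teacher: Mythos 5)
Your proposal is correct and follows essentially the same route as the paper's proof: apply Lemma~\ref{lem:momentPrelim}, rewrite the exponents via $\sum_t t\nu_t = qk-\Delta$, pair each of the $\nu_0$ factors of $\mu_0 k^{-(q-1)}$ with a copy of the $t=q$ term using $\nu_0\le\nu_q$, and then apply the weighted-geometric-mean-$\le$-max bound since the resulting exponents sum to $k$. The only cosmetic difference is that you track the $R_2^{qk}$ prefactor explicitly and note the redefinition $R_2\mapsto R_2^2$ needed for the $B$ term, which the paper absorbs implicitly into its absolute constant.
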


\begin{proof}
We apply Lemma \ref{lem:momentPrelim}.
Since $\sum_{t=0}^q\nu_t=k$ and $\sum_{t=0}^{q}(q-t)\nu_t =\Delta$ we have,
\begin{eqnarray*}
q(\nu_q-\nu_0)+\nu_0+\sum_{t=1}^{q-1}t\nu_t&=&-(q-1)\nu_0+\sum_{t=0}^{q}t\nu_t\\
&=&q\left(k-\sum_{t=0}^q\nu_t\right)-(q-1)\nu_0+\sum_{t=0}^{q}t\nu_t\\
&=&qk-(q-1)\nu_0-\sum_{t=0}^{q}(q-t)\nu_t\\
&=&qk-(q-1)\nu_0-\Delta,\\
q(\nu_q-\nu_0)+q\nu_0+\sum_{t=1}^{q-1}t\nu_t&=&qk-\Delta.
\end{eqnarray*}
Therefore,
\begin{eqnarray}
\max_{{\bar \nu}} \left\{R_2^{qk} L^{qk-\Delta}\cdot \Gamma^{qk-\Delta}\cdot k^{qk-(q-1)\nu_0-\Delta}\cdot \left(\prod_{t=0}^{q}\mu_t^{\nu_t} \right) \right\}\nonumber \\
=  \max_{{\bar \nu}} \left\{(k^qR_2^q\Gamma^{q}L^q\mu_q)^{\nu_q-\nu_0}\cdot (k R_2^{2q}\Gamma^{q}L^q\mu_q\mu_0)^{\nu_0}\cdot \prod_{t=1}^{q-1}(k^{t}R_2^q\Gamma^tL^t\mu_t)^{\nu_t}\right\}\nonumber \\
\le  \max_{{\bar \nu}} \left\{(k^qR_3^q\Gamma^{q}L^q\mu_q)^{\nu_q-\nu_0}\cdot (k R_3^{q}\Gamma^{q}L^q\mu_q\mu_0)^{\nu_0}\cdot \prod_{t=1}^{q-1}(k^{t}R_3^q\Gamma^tL^t\mu_t)^{\nu_t}\right\}\nonumber
\end{eqnarray}
for the absolute constant $R_3=R_2^2$. Using the facts that $\sum_{t=0}^q\nu_t=k$ and $\nu_q\ge \nu_0$ again, we derive,
\begin{eqnarray*}
\left|\E\left[g(Y)^k\right]\right| & \le & \max_{{\bar \nu}} \left\{(k^qR_3^q\Gamma^{q}L^q\mu_q)^{\nu_q-\nu_0}\cdot \left(\sqrt{k R_3^{q}\Gamma^{q}L^q\mu_q\mu_0}\right)^{2\nu_0}\cdot \prod_{t=1}^{q-1}(k^{t}R_3^q\Gamma^tL^t\mu_t)^{\nu_t}\right\}\\
&\le& \max \left\{\left(k^qR_3^q\Gamma^{q}L^q \mu_q\right)^{k},\left(\sqrt{kR_3^{q}\Gamma^{q}L^q \mu_q\mu_0}\right)^{k}, \max_{t\in [q-1]}\left(k^{t}R_3^q\Gamma^tL^t \mu_t\right)^{k}\right\}  \nonumber \\
&=&  \max \left\{ \left(\sqrt{kR_3^{q}\Gamma^{q}L^q \mu_q\mu_0}\right)^{k}, \max_{t\in [q]}\left(k^{t}R_3^q\Gamma^tL^t\mu_t\right)^{k}\right\}.
\end{eqnarray*}
\end{proof}

\subsection{Three Technical Lemmas}\label{sec:ordering}\label{sec:technical}

In this section we prove  technical lemmas that were used in the proof of Lemma \ref{lem:momentPrelim}.

\begin{lemma}\label{lem:momBound}
For any moment bounded random variable $Z$ with parameter $L$, integer $k\ge 1$, set $S\subseteq [k]$ and a collection of positive integer powers $d_t$ for $t\in S$, the following inequality holds:
$$\left|\E\left[\prod_{t\in S} \left(Z^t\right)^{d_t}\right]\right|
\le \min_{t\in S}\left\{\frac{L^{D-t}\cdot D!\cdot \E\left[\left|Z\right|^t\right]}{t!}\right\}$$
and
$$\left|\E\left[\prod_{t\in S} \left(Z^t-\E\left[Z^t\right]\right)^{d_t}\right]\right|
\le \min_{t\in S}\left\{  \frac{2^d L^{D-t}\cdot D!\cdot \E\left[\left|Z\right|^t\right]}{t!}\right\}$$
where $D=\sum_{t\in S}td_t$ and $d=\sum_{t\in S}d_t$.
\end{lemma}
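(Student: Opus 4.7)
The plan is to tackle the two inequalities independently. For the first, observe that $\prod_{t\in S}(Z^t)^{d_t}=Z^D$, so the left hand side equals $|\E[Z^D]|\le \E[|Z|^D]$; iterating the defining inequality $\E[|Z|^i]\le iL\,\E[|Z|^{i-1}]$ exactly $D-t$ times yields $\E[|Z|^D]\le \frac{D!}{t!}L^{D-t}\E[|Z|^t]$ for every $t\in S$ (valid since $D\ge t d_t\ge t$), and then one takes the minimum.

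For the centered inequality, I would first apply the triangle inequality $|Z^t-\E[Z^t]|\le |Z|^t+\E[|Z|^t]$, and then the binomial theorem inside the expectation, to obtain
\[
\left|\E\Bigl[\prod_t (Z^t-\E[Z^t])^{d_t}\Bigr]\right|\;\le\;\sum_{\vec j}\Bigl(\prod_t\binom{d_t}{j_t}(\E[|Z|^t])^{d_t-j_t}\Bigr)\E[|Z|^{D'}],
\]
where $\vec j=(j_t)_{t\in S}$ ranges over $0\le j_t\le d_t$ and $D':=\sum_t tj_t$. The binomial coefficients aggregate to $\prod_t 2^{d_t}=2^d$, so it suffices to bound each individual term by $\frac{D!}{t_0!}L^{D-t_0}\E[|Z|^{t_0}]$ for an arbitrary fixed $t_0\in S$.

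The per-term bound splits into two cases. If $D'\ge t_0$, moment boundedness gives $\E[|Z|^{D'}]\le \frac{D'!}{t_0!}L^{D'-t_0}\E[|Z|^{t_0}]$, and the remaining moments are bounded crudely by $\E[|Z|^t]\le t!L^t$; the $L$-exponents telescope to $L^{D-t_0}$, and the residual factorials satisfy $D'!\prod_t(t!)^{d_t-j_t}\le D!$ because these integers are the multinomial denominator for the composition $D=D'+\sum_t t(d_t-j_t)$. If instead $D'<t_0$, then necessarily $j_{t_0}=0$ (else $D'\ge tj_{t_0}\ge t_0$), so the factor $(\E[|Z|^{t_0}])^{d_{t_0}}$ already appears in the product and one copy of $\E[|Z|^{t_0}]$ can be pulled out for free; bounding the remaining moments by $s!L^s$ and applying the analogous multinomial inequality for parts summing to $D-t_0$ gives $D'!(t_0!)^{d_{t_0}-1}\prod_{t\ne t_0}(t!)^{d_t-j_t}\le (D-t_0)!\le D!/t_0!$.

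The main obstacle is exactly this second case: moment boundedness is one-sided, controlling large moments in terms of smaller ones but never the reverse, so when $D'<t_0$ there is no direct route from $\E[|Z|^{D'}]$ back up to $\E[|Z|^{t_0}]$. The key observation that unlocks the argument is that the very inequality $D'<t_0$ forces $j_{t_0}=0$, which automatically supplies the required $\E[|Z|^{t_0}]$ factor from the binomial expansion itself. Summing the per-term bound over $\vec j$ contributes the factor $2^d$ and completes the proof.
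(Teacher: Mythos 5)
Both parts of your argument are correct. The first inequality is handled exactly as in the paper: collapse $\prod_{t\in S}(Z^t)^{d_t}$ to $Z^D$, bound $|\E[Z^D]|\le\E[|Z|^D]$, and iterate moment boundedness $D-t$ times.

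For the centered inequality you take a genuinely different route. The paper applies H\"older's inequality with exponents $D/(td_t)$ to factor the expectation into $\prod_{t\in S}\bigl(\E[|Z^t-\E[Z^t]|^{D/t}]\bigr)^{td_t/D}$, then uses the triangle inequality, the convexity estimate $(x+y)^p\le 2^{p-1}(x^p+y^p)$, and Jensen's inequality to collapse everything to $2^d\E[|Z|^D]$, after which the non-centered bound finishes the job. You avoid H\"older entirely: you bound $|Z^t-\E[Z^t]|\le |Z|^t+\E[|Z|^t]$, expand $\prod_t(|Z|^t+\E[|Z|^t])^{d_t}$ via the binomial theorem, and bound each term individually, so the $2^d$ emerges transparently as the sum of binomial coefficients. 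Your per-term estimate is correct: the crude bound $\E[|Z|^s]\le s!\,L^s$, the fact that multinomial coefficients are at least one (giving $D'!\prod_t(t!)^{d_t-j_t}\le D!$ since the parts sum to $D$), and the $L$-exponent bookkeeping all check out. The case split on $D'\ge t_0$ versus $D'<t_0$ is exactly the right thing to do since moment boundedness only controls higher moments in terms of lower ones, and your observation that $D'<t_0$ forces $j_{t_0}=0$, so that a factor $\E[|Z|^{t_0}]$ is already sitting in the binomial term, is the precise fix. The two proofs are of comparable length; yours is arguably more elementary in that it avoids H\"older, at the cost of slightly more explicit combinatorial bookkeeping, while the paper's is a cleaner one-line reduction to the non-centered case once you accept the H\"older step.
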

\begin{proof}
To prove the first inequality note that for any $\tau \in S$ by Jensen's inequality we have
\begin{align}
\left|\E\left[\prod_{t\in S} \left(Z^t\right)^{d_t}\right]\right| &= |\Eb{Z^D}| \le \Eb{|Z|^D}  \nonumber\\
&\le \frac{L^{D-\tau} D!}{\tau!}\E\left[\left|Z\right|^{\tau}\right] \label{eqn:applyMomBound}
\end{align}
where the final inequality follows from applying Definition \ref{var} $D-\tau$ times.

To show the second inequality we bound
\begin{eqnarray}
\left|\E\left[\prod_{t\in S} \left(Z^t-\E\left[Z^t\right]\right)^{d_t}\right]\right|
&\le&  \prod_{t\in S}  \left(\E\left[\left|Z^t-\E\left[Z^t\right]\right|^{d_t\cdot\frac{D}{td_t}}\right]\right)^{td_t/D} \nonumber\\
&\le&  \prod_{t\in S}  \left(\E\left[\left(|Z^t|+\E\left[|Z|^t\right]\right)^{\frac{D}{t}}\right]\right)^{td_t/D} \nonumber\\
&\le&  \prod_{t\in S}  \left(\E\left[2^{\frac{D}{t} - 1}\left(|Z|^{D}+(\E\left[|Z|^t\right])^{D/t}\right)\right]\right)^{td_t/D} \nonumber\\
&\le&  \prod_{t\in S}  \left(2^{\frac{D}{t}}\E\left[|Z|^{D}\right]\right)^{td_t/D} \nonumber\\
&=&2^{d}\E\left[|Z|^D\right] \nonumber \\
& \le & \frac{2^d L^{D-\tau} D!}{\tau!}\E\left[\left|Z\right|^{\tau}\right] \nonumber
\end{eqnarray}
where the first inequality uses H\"older's Inequality (see Lemma \ref{holder}),
the third inequality uses the fact (which follows from convexity) that $(x+y)^p \le 2^{p-1}(x^p + y^p)$ for any $p \ge 1$ and $x,y \ge 0$ (in particular ($x=|Z^t|$ and $y=\E\left[|Z|^t\right]$),
the fourth inequality uses Jensen's inequality,
and the last inequality uses the  inequality (\ref{eqn:applyMomBound}).
\end{proof}

We now prove the following intuitive fact that was left unproven near the end of Section \ref{sec:eachGraph}.
\begin{lemma}\label{lem:ordering}
In the notation of Section \ref{sec:eachGraph} there exists a canonical  ordering $h^{(1)},\ldots,h^{(k)}$ of the hyperedges ${\cal H}(G')$ such that $\nu_0=c$.
\end{lemma}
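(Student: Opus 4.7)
The plan is to unpack what $\nu_0 = c$ really means and then build the ordering by a reversed component-by-component traversal. First I would observe that, by the definitions in Section \ref{sec:eachGraph}, a vertex $v\in \ve{h^{(s)}}$ lies in $V_s$ iff $v$ does not occur in any of the later hyperedges $h^{(s+1)},\ldots,h^{(k)}$. Since $\sum_{v\in \ve{h^{(s)}}} \tau_{h^{(s)} v} = q$, the step $s$ contributes to $\nu_0$ (i.e.\ $q - \sum_{v\in V_s}\delta_v = 0$) iff $V_s = \ve{h^{(s)}}$, which holds iff $h^{(s)}$ shares no vertex with any later hyperedge. So $\nu_0$ is simply the number of indices $s$ at which $h^{(s)}$ has been cut off from the tail of the sequence.

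Next I would construct an explicit ordering. Let $C_1,\ldots,C_c$ be the hyperedge sets of the connected components of $G'$, where two hyperedges are considered adjacent whenever they share a vertex. For each $C_j$ I would pick an arbitrary root hyperedge and run BFS (or DFS) on the auxiliary graph whose nodes are the hyperedges of $C_j$ and whose edges join pairs that share a vertex. This yields an ordering $h_{j,1},\ldots,h_{j,|C_j|}$ in which the root is $h_{j,1}$ and every later hyperedge $h_{j,i}$ with $i\ge 2$ shares a vertex with some earlier $h_{j,i'}$, $i' < i$. I would then concatenate these per-component lists into a single sequence of length $k$ and reverse it, obtaining the candidate ordering $h^{(1)},\ldots,h^{(k)}$. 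The hyperedges of each component then occupy a contiguous block and the root of each component sits at the \emph{last} position within its block.

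Finally I would verify $\nu_0 = c$ by case analysis on step $s$, letting $C_j$ denote the component containing $h^{(s)}$. If $h^{(s)}$ is the root of $C_j$ then every later hyperedge lies in a different component, hence is vertex-disjoint from $h^{(s)}$ by definition of a connected component, so $V_s = \ve{h^{(s)}}$ and step $s$ contributes to $\nu_0$. Otherwise, the BFS property guarantees that $h^{(s)}$ shares a vertex with some hyperedge earlier in the pre-reversal list, i.e.\ later in the post-reversal list, within the same component; hence $V_s \subsetneq \ve{h^{(s)}}$ and step $s$ does not contribute to $\nu_0$. Exactly one step per component (the root) falls in the first case, so $\nu_0 = c$. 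The main (and essentially only) point to check is that the per-component BFS really does guarantee an earlier neighbor for each non-root hyperedge, which is the standard property of BFS/DFS spanning trees; combined with the vertex-disjointness of distinct connected components, this yields the lemma.
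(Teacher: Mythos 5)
Your proof is correct and follows essentially the same strategy as the paper's: both work with the line graph of $G'$ (you call it the auxiliary graph) and use a graph traversal to order hyperedges so that, within each connected component, only the last-removed hyperedge becomes isolated from the remaining tail. The paper states it as a forward greedy invariant (at each step, remove a line-graph vertex that does not increase the number of connected components, e.g.\ a DFS leaf or an isolated vertex) and then counts the drops in the component count, whereas you build the reversed BFS/DFS orderings per component explicitly and check each position directly; these are two phrasings of the same argument and both are valid.
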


\begin{proof}
Let ${\cal L}$ be the line graph of $G'$, i.e.\ an undirected graph with one vertex for each of the $k$ hyperedges of $G'$ and an edge connecting every pair of vertices that correspond to hyperedges with intersecting vertex sets. We define the desired sequence of hyperedges $h^{(1)},\dots,h^{(k)}$ and a sequence of induced subgraphs ${\cal L}_1,\dots,{\cal L}_k$ of ${\cal L}$ as follows.

We set ${\cal L}_1$ to ${\cal L}$. For any $1 \le s \le k$ we form ${\cal L}_{s+1}$ from ${\cal L}_{s}$ by removing vertex $h^{(s)}$, where $h^{(s)}$ has the lowest label from the vertices of ${\cal L}_s$ subject to the constraint that the number of connected components $n_{s+1}$ of ${\cal L}_{s+1}$ must not exceed the number of connected components $n_s$ of ${\cal L}_s$. For example pick $h^{(s)}$ to be an arbitrary leaf of a depth first search tree started from an arbitrary vertex of ${\cal L}_s$ or an isolated vertex of ${\cal L}_s$ if there are any.

It remains to show that the ordering $h^{(1)},\dots,h^{(k)}$ satisfies the desired property $\nu_0=c$. Note that $h^{(s)}$ contributes to $\nu_0$ if and only if $V_s={\cal V}(h^{(s)})$, that is if and only if $h^{(s)}$ is an isolated vertex in ${\cal L}_s$. Whenever such an $h^{(s)}$ is chosen the number of connected components decreases by one (i.e.\ $n_{s+1}=n_s-1$), and otherwise the number of connected components is unchanged (i.e.\ $n_{s+1}=n_s$). We conclude that $\nu_0=n_{1}-n_{k+1}=c-0$ as desired.
%We conclude that $\nu_0$ is equal to the number of connected components $n_1=c$ in ${\cal L}_1$, minus the number $n_{k+1}$ in ${\cal L}_{k+1}$, which is zero because ${\cal L}_{k+1}$ has no vertices.
\end{proof}

The following Lemma was used in the proof of the Initial Moment Lemma and will be used later in the proof of the Main Counting Lemma.

\begin{lemma}\label{lem:ziq1}
Let $G'$ be a labeled hypergraph with all degrees at least two, $c$ connected components with sets of vertices $C_1,\dots, C_c$ and the number of hyperedges $k_1,\dots, k_c$. Further, let $h^{(1)},\dots,h^{(k)}$
be the canonical ordering of its hyperedges specified in Lemma \ref{lem:ordering} where $k=\sum_{i=1}^ck_i$. Then for each $i=1,\dots,c$ we have
\[
(q-1)k_i - \sum_{v \in C_i} \delta_v \ge q-2
\]
where $\delta_v$ is the last power of vertex $v$ corresponding to the canonical ordering of hyperedges as defined in Section \ref{sec:eachGraph}.
\end{lemma}
\begin{proof}
Fix a labeled hypergraph $G'$. Recall that the power  of a vertex $v$ in hyperedge $h$ is denoted $\tau_{hv}$.
Following Section \ref{sec:eachGraph} define $V_j$ to be the set of vertices whose last incident hyperedge is $h^{(j)}$, where ``last'' is relative to the canonical ordering $h^{(1)},\dots,h^{(k)}$ of the hyperedges defined in that section. Let $I_i=\{s\in [k]|   {\cal V}(h^{(s)})\subseteq C_i\}$.   We charge $(q-1)k_i - \sum_{v \in C_i} \delta_v $ to the various hyperedges in the following natural way:
\begin{align}
(q-1)k_i - \sum_{v \in C_i} \delta_v & = \sum_{j\in I_i} \left((q-1) - \sum_{v \in V_j} \tau_{h^{(j)}v} \right) \equiv \sum_{j\in I_i} \alpha_j \label{eqn:ziq1}
.\end{align}
The contribution $\alpha_s$ of the   hyperedge with smallest index $s\in I_i$  is exactly $q-1$ since the degree of each vertex $v$ is at least two and $h^{(s)}$ is the first hyperedge in this connected component that we delete, i.e. $V_s=\emptyset$.
The last hyperedge $h^{(s')}$ for $s'=\max\{ j|j\in I_i\}$ clearly contributes $\alpha_{s'}=(q-1)-q=-1$ since $|V_{s'}|=\eta$. For any $j\in I_i\setminus \{s'\}$ we know that $\sum_{v \in V_j} \tau_{h^{(j)}v} \le q-1$ because $|V_j|\le \eta-1$. Otherwise component $i$ would contribute more than 1 to $\nu_0$ and $\nu_0$ would exceed $c$. We conclude that $\alpha_j \ge 0$ for $j\in I_i\setminus \{s'\}$. Using (\ref{eqn:ziq1}) and these lower bounds on the $\alpha_j$ we bound
\[
(q-1)k_i - \sum_{v \in C_i} \delta_v = \sum_{j\in I_i} \alpha_j \ge q-1 + 0 - 1 = q-2
\]
as desired.
\end{proof}
The following Corollary immediately follows from Lemma \ref{lem:ziq1}.
\begin{corollary}\label{Cor_Ineq}
Let $G'$ be a labeled hypergraph with all degrees at least two, $c$ connected components and the canonical ordering satisfying the condition $\nu_0=c$ we have $qk-\Delta\ge (q-2)\nu_0$ where
$\Delta=\sum_{v\in {\cal V}(G)} \delta_v=\sum_{t=0}^q (q-t)\nu_t$.
\end{corollary}
%%%%%%%%%%%%%%%%%%%%%%%%%%%
\section{General Even Moment Lemma}\label{ProofMainGen}
\begin{lemma}[H\"older's Inequality]\label{holder}
Let $p_1,\dots,p_k\in (1,+\infty)$ such that $\sum_{i=1}^k\frac{1}{p_i}=1$ then for arbitrary collection $X_1,\dots,X_k$ of random variables on the same probability space the following inequality holds
$$\E\left[\left|\prod_{i=1}^kX_i\right|\right]\le \prod_{i=1}^k\E\left[\left|X_i\right|^{p_i}\right]^{1/p_i}.$$
\end{lemma}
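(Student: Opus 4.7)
The plan is to prove Hölder's inequality by the standard route: normalize the random variables, apply the pointwise AM--GM (weighted) / Young's inequality, and take expectations. First I would reduce to the nonnegative case by replacing each $X_i$ with $|X_i|$, which does not change either side of the inequality, and handle two degenerate cases separately. If $\E[|X_i|^{p_i}] = +\infty$ for some $i$, the right-hand side is infinite and the inequality holds trivially. If $\E[|X_i|^{p_i}] = 0$ for some $i$, then $X_i = 0$ almost surely, so both sides equal zero. Thus in what follows I may assume $0 < \E[|X_i|^{p_i}] < +\infty$ for all $i$.

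The key pointwise estimate is the weighted AM--GM inequality: for any nonnegative real numbers $y_1,\dots,y_k$ and weights $\lambda_i = 1/p_i$ with $\sum_i \lambda_i = 1$,
\[
\prod_{i=1}^{k} y_i^{\lambda_i} \;\le\; \sum_{i=1}^{k} \lambda_i\, y_i.
\]
This is immediate from the concavity of $\log$: Jensen's inequality gives $\sum_i \lambda_i \log y_i \le \log(\sum_i \lambda_i y_i)$ whenever all $y_i > 0$ (and the bound extends to the case $y_i = 0$ by continuity). Substituting $y_i = a_i^{p_i}$ for nonnegative reals $a_i$ yields the classical Young-type bound
\[
\prod_{i=1}^{k} a_i \;\le\; \sum_{i=1}^{k} \frac{a_i^{p_i}}{p_i}.
\]

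Next, I would normalize each variable by setting $c_i = \E[|X_i|^{p_i}]^{1/p_i}$ and $Z_i = |X_i|/c_i$, so that $\E[Z_i^{p_i}] = 1$. Applying the pointwise Young-type bound to $a_i = Z_i(\omega)$ for each sample $\omega$ gives
\[
\prod_{i=1}^{k} Z_i \;\le\; \sum_{i=1}^{k} \frac{Z_i^{p_i}}{p_i}
\]
almost surely. Taking expectations and using linearity together with $\E[Z_i^{p_i}] = 1$,
\[
\E\!\left[\prod_{i=1}^{k} Z_i\right] \;\le\; \sum_{i=1}^{k} \frac{1}{p_i} \E[Z_i^{p_i}] \;=\; \sum_{i=1}^{k} \frac{1}{p_i} \;=\; 1.
\]
Multiplying both sides by $\prod_{i=1}^{k} c_i = \prod_{i=1}^{k} \E[|X_i|^{p_i}]^{1/p_i}$ and noting that $\prod_i (c_i Z_i) = \prod_i |X_i|$ yields the claimed inequality.

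There is no real obstacle here beyond being careful with the trivial cases; the argument is essentially an application of Jensen's inequality for $\log$ followed by linearity of expectation. The only mildly delicate step is making sure the inductive flavor of Hölder (going from $k = 2$ to general $k$) is avoided by using the fully weighted AM--GM directly, which keeps the proof compact.
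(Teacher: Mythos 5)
Your proof is correct and is the standard textbook argument for H\"older's inequality (normalize, apply the pointwise weighted AM--GM / Young inequality via concavity of $\log$, then take expectations). The paper itself does not give a proof of this lemma at all---H\"older's inequality is cited as a classical fact and used directly to derive Corollary~\ref{cor:momentOfSum}---so there is no ``paper's proof'' to compare against. Your write-up is a clean, self-contained justification of what the paper takes for granted; the degenerate-case handling (zero and infinite $p_i$-th moments) and the use of the $k$-variable weighted AM--GM to avoid a two-variable induction are both fine.
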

We will use the following corollary of H\"older's inequality known as Minkowski inequality (or triangle inequality for norms).
\begin{corollary}[Minkowski Inequality]\label{cor:momentOfSum}
Let $k\ge 1$  and $Z_1,Z_2,\ldots,Z_m$ be (potentially dependent) random variables with $\E[|Z_i|^k] \le z_i^k$ for $z_i\in R_+$. It follows that
\begin{align}
\E\left[ \left(\sum_{i=1}^m|Z_i|\right)^k\right] & \le \left(\sum_{i=1}^mz_i\right)^k \label{eqn:momentOfSum}
.\end{align}
\end{corollary}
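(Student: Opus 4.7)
The plan is to expand the $k$-th power of the sum into a multi-index summation, apply Lemma \ref{holder} (H\"older's inequality) term-by-term with all equal exponents $p_j = k$, and then repackage the resulting bound as the $k$-th power of $\sum_i z_i$.

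Concretely, I would first use linearity of expectation after expanding
\[
\left(\sum_{i=1}^m |Z_i|\right)^k = \sum_{i_1,\ldots,i_k \in [m]} |Z_{i_1}| \cdot |Z_{i_2}| \cdots |Z_{i_k}|,
\]
so that $\E\bigl[(\sum_i |Z_i|)^k\bigr] = \sum_{i_1,\ldots,i_k} \E\bigl[|Z_{i_1}| \cdots |Z_{i_k}|\bigr]$. For each fixed tuple $(i_1,\ldots,i_k)$, apply Lemma \ref{holder} with the choice $p_1 = p_2 = \cdots = p_k = k$ (so that $\sum_j 1/p_j = 1$ as required) to the random variables $X_j = Z_{i_j}$. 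This yields
\[
\E\bigl[|Z_{i_1}| \cdots |Z_{i_k}|\bigr] \le \prod_{j=1}^k \E\bigl[|Z_{i_j}|^k\bigr]^{1/k} \le \prod_{j=1}^k z_{i_j},
\]
where the second inequality uses the hypothesis $\E[|Z_i|^k] \le z_i^k$ together with $z_i \ge 0$.

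Summing these per-tuple bounds over all $(i_1,\ldots,i_k) \in [m]^k$ and recognizing the resulting expression as a product of $k$ identical sums gives
\[
\E\left[\left(\sum_{i=1}^m |Z_i|\right)^k\right] \le \sum_{i_1,\ldots,i_k} \prod_{j=1}^k z_{i_j} = \left(\sum_{i=1}^m z_i\right)^k,
\]
which is exactly (\ref{eqn:momentOfSum}). There is no real obstacle here; the only thing to be careful about is that the $Z_i$ may be dependent, which is precisely why we cannot simply split the expectation of a product into a product of expectations and instead must invoke H\"older. The equal-exponent choice $p_j=k$ is the natural one since it is the only choice that gives a uniform bound $\E[|Z_{i_j}|^k]^{1/k}$ for each factor regardless of which $i_j$ appears.
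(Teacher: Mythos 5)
Your proposal is correct and follows essentially the same route as the paper: expand the $k$-th power, apply H\"older term by term, and resum. The only cosmetic difference is that you sum over ordered tuples $(i_1,\ldots,i_k)$ with uniform exponents $p_j=k$, whereas the paper groups by the multiset of indices (a set $S$ with multiplicities $d_i$) and uses exponents $p_i=k/d_i$; after regrouping, your per-term bound $\prod_j \E[|Z_{i_j}|^k]^{1/k}$ is identical to the paper's $\prod_{i\in S}\E[|Z_i|^k]^{d_i/k}$.
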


\begin{lemma}[General Even Moment Lemma]\label{generalmoment}
We are given $n$ independent moment bounded random variables $Y=(Y_1,\dots, Y_n)$ with the same parameter $L$ and a general power $q$ polynomial $f(x)$ and maximal variable power $\Gamma=\max_{h\in {\cal  H}, v\in h} \tau_{hv}$. Let $k\ge 2$ be an even integer then
 \begin{eqnarray}\label{inequality4gen}
\E\left[\left| f(Y)-\E\left[ f(Y)\right]\right|^k\right] & \le & \max \left\{ \max_{t\in [q]}\left(\sqrt{kR_4^q\Gamma^tL^t \mu_t\mu_0}\right)^{k}, \max_{t\in [q]}(k^{t}R_4^q\Gamma^tL^t\mu_t)^{k}\right\}.
\end{eqnarray}
where $R_4\ge 1$ is some absolute constant.
\end{lemma}

\begin{proof}
Let weight function $w$ and hypergraph $H=([n],{\cal H})$ be such that $f(Y) = \sum_{h \in {\cal H}} w_h \prod_{v \in \ve{h}} Y^{\tau_{hv}}$. Let $X_{hv} = Y_v^{\tau_{hv}}-\Eb{Y_v^{\tau_{hv}}}$.
Let ${\cal H}'$ denote the set of all possible hyperedges (including the empty hyperedge) with vertices from ${\cal V}(H)=[n]$ and total power at most $q$.
First we note that
\begin{align}
f(Y) &= \sum_{h\in {\cal H}} w_h \prod_{v \in \ve{h}} (X_{hv} + \Eb{Y_v^{\tau_{hv}}}) \nonumber \\
& = \sum_{h' \in {\cal H}'} \sum_{h \in {\cal H} : h \extends h'} w_h \left(\prod_{v \in \ve{h} \setminus \ve{h'}} \Eb{Y_v^{\tau_{hv}}}\right)\left( \prod_{v \in \ve{h'}} X_{hv}\right) \nonumber \\
& = \sum_{h' \in {\cal H}'} w'_{h'} \prod_{v \in \ve{h'}} X_{h'v} \label{eqn:center}
\end{align}
where $h'$ ranges over all possible hyperedges (including the empty hyperedge) and
\[
w'_{h'} = \sum_{h\in {\cal H}|\ h \extends h'}w_h\left(\prod_{v \in \ve{h} \setminus \ve{h'}} \Eb{Y_v^{\tau_{hv}}}\right)
.\]
We next group the monomials on the right hand side of (\ref{eqn:center}) by cardinality, power, and sign of coefficient, yielding $m \le 2q^2$ polynomials $g^{(1)},\dots,g^{(m)}$ with corresponding weight functions for all monomials $w^{(1)},\dots,w^{(m)}$ and powers $q_1,\dots,q_m$. That is,
\begin{align}
f(Y) & = w'_{\{\}} + \sum_{i=1}^m \sum_{h':\eta(h') \ge 1} w^{(i)}_{h'} \prod_{v \in \ve{h'}} X_{h'v} \\
& = \Eb{f(Y)} + \sum_{i=1}^m g^{(i)}(Y)
\nonumber
\end{align}
where $\{\}$ is the empty hyperedge.
We have
\begin{align*}
\mu_r(w^{(i)}, Y) \le \mu_r(w', Y) &= \max_{h_0:q(h_0)=r} \sum_{h' \extends h_0} |w'_{h'}| \prod_{v \in \ve{h'} \setminus \ve{h_0}} \Eb{|Y^{\tau_{h'v}}_v|} \\
 & \le \max_{h_0:q(h_0)=r} \sum_{h' \extends h_0} \sum_{h \extends h'}|w_h|\left(\prod_{v \in \ve{h} \setminus \ve{h'}} |\Eb{Y^{\tau_{hv}}_v}|\right) \prod_{v \in \ve{h'} \setminus \ve{h_0}} \Eb{|Y^{\tau_{hv}}_v|} \\
 & \le \max_{h_0:q(h_0)=r} \sum_{h' \extends h_0} \sum_{h \extends h'}|w_h|\left(\prod_{v \in \ve{h} \setminus \ve{h_0}} \Eb{|Y_v|^{\tau_{hv}}}\right) \\
 & \le 2^q \max_{h_0:q(h_0)=r} \sum_{h \extends h_0} |w_h|\left(\prod_{v \in \ve{h} \setminus \ve{h_0}} \Eb{|Y_v|^{\tau_{hv}}}\right) = 2^q\mu_r(w,Y)=2^q\mu_r
\end{align*}
where we upper bounded the number of hyperedges $h'$ such that $h \extends h' \extends h_0$ by $2^q$.
Therefore, for even $k\ge 2$ the Lemma \ref{lem:moment} implies that
$$\E\left[\left|g^{(i)}(Y)\right|^k\right]=\left|\E\left[g^{(i)}(Y)^k\right]\right|\le 2^{qk} \max \left\{ \left(\sqrt{kR_3^{q_i}\Gamma^{q_i}L^{q_i} \mu_{q_i}\mu_0}\right)^{k}, \max_{t\in [q_i]}(k^{t}R_3^{q_i} L^t\Gamma^t\mu_t)^{k}\right\}=2^{qk}z_i^k.$$

Applying Corollary \ref{cor:momentOfSum} yields
\begin{eqnarray*}
\E\left[\left|f(Y)-\E\left[ f(Y)\right]\right|^k\right]&\le& \E\left[\left(\sum_{i=1}^{m}\left|g_i(Y)\right|\right)^k\right]\le  2^{qk} \left(\sum_{i=1}^{m}z_i\right)^k\le 2^{qk} m^k\max_{i\in [m]}z^k_i\\
&\le& 2^{qk} m^k\cdot \max \left\{ \max_{t\in [q]}\left(\sqrt{kR_3^{q}\Gamma^tL^{t} \mu_{t}\mu_0}\right)^{k}, \max_{t\in [q]}(k^{t}R_3^qL^t\Gamma^t\mu_t)^{k}\right\}\\
&\le&  \max \left\{ \max_{t\in [q]}\left(\sqrt{kR_4^{q}\Gamma^tL^{t} \mu_{t}\mu_0}\right)^{k}, \max_{t\in [q]}(k^{t}R_4^qL^t\Gamma^t\mu_t)^{k}\right\}
\end{eqnarray*}
for some absolute constant $R_4$ such that $m^22^{2q} R_3^q\le R_4^q$.
\end{proof}

%%%%%%%%%%%%%%%%%%%%%%%%%%%%%%%%%%%%%%%%%%%%%%%555
\section{Proof of the Theorem \ref{main1}}\label{ProofMain}

Now we prove Theorem \ref{main1} by applying the Markov's inequality.
\begin{proof}
By Markov's inequality we derive
$$Pr[|f(Y)-\E\left[ f(Y)\right]|\ge \lambda]=Pr[|f(Y)-\E\left[ f(Y)\right]|^k\ge \lambda^k] \le \frac{\E[|f(Y)-\E\left[ f(Y)\right]|^k]}{\lambda^k}.$$
Choosing $k^*\ge 0$ to be the  even   integer  such that $k^*\in (K-2,K]$ for
$$K=\min \left\{ \min_{t\in [q]}\frac{\lambda^2}{e^2R_4^q\Gamma^tL^t\mu_t \mu_0 },\min_{t\in [q]}\left(\frac{\lambda}{ e R_4^qL^t\Gamma^t\mu_t}\right)^{1/t}\right\}$$
i.e.\
\[
\frac{\sqrt{k^*R_4^q\Gamma^tL^t \mu_t\mu_0}}{\lambda} \le 1/e \text{ and } \frac{(k^*)^{t} R_4^q L^t \Gamma^t \mu_t}{\lambda} \le 1/e
\]
for all $t\in [q]$. Using the inequality (\ref{inequality4gen}) from the Lemma \ref{generalmoment} we derive
\begin{eqnarray*}
Pr[|f(Y)-\E\left[ f(Y)\right]|\ge \lambda] &\le& \frac{\E\left[|f(Y)-\E\left[ f(Y)\right]|^{k^*}\right]}{\lambda^{k^*}} \\
& \le &
\max\left\{\max_{t\in [q]} e^{k^*\ln \frac{\sqrt{k^*R_4^q\Gamma^tL^t \mu_t\mu_0}}{\lambda}}, \max_{t\in [q]} e^{k^*\ln  \frac{(k^*)^{t} R_4^q L^t\Gamma^t \mu_t}{\lambda}   }\right\}\\
&\le& e^{-k^*}\le  e^{-K+2}\nonumber\\
&\le& e^2 \cdot \max\left\{\max_{t\in [q]}e^{-\frac{\lambda^2}{R^q \Gamma^tL^t\mu_t\mu_0}},\max_{t\in [q]}e^{-\frac{1}{L\cdot \Gamma}\left(\frac{\lambda}{R^q\mu_t}\right)^{1/t}}\right\}, \label{eqn:centeredMoment}
\end{eqnarray*}
for some universal constant $R>R_4\ge 1$. This implies the statement of the Theorem.
\end{proof}

%The proof of Theorem \ref{main2} follows the proof of Theorem \ref{main1}) except that we use Lemma \ref{generalmomentMultilin} instead of Lemma \ref{generalmoment}.

%%%%%%%%%%%%%%%%%%%%%%%%%%
\section{Counting Lemma}\label{sec:counting}
In this section we consider labeled hypergraphs in ${\cal S}_2(k,\ell)$ for fixed parameters $\eta, q$ and $\Gamma$.
We use $C_1,\ldots,C_c$ to denote the set of vertices in the connected components of a labeled hypergraph. We use $\ell_1,\ldots,\ell_c$ and $k_1,\ldots,k_c$ to denote the number of vertices and hyperedges in those connected components.
We will freely use the following elementary facts:
\begin{enumerate}
\item $\eta \le \ell_i \le \eta k_i / 2$ where the lower bound follows from the fact that each connected component has at least one hyperedge and the upper bound follows from the fact that each vertex has degree at least two;
\item $\eta c \le \ell \le \eta k/2$ (these inequalities are obtained by summing up the above inequalities over all connected components);
\item $1 \le c \le k/2$, the lower bound is obvious and the upper bound follows from the previous inequality.
\end{enumerate}

In two of the auxiliary lemmas below we will use the classical Gibbs inequality which states that for two arbitrary discrete probability distributions $p_1,\dots,p_n$ and $q_1,\dots,q_n$ with strictly positive $p_i,q_i$ the following inequality holds
$$-\sum_{i=1}^np_i\log_2p_i\le -\sum_{i=1}^np_i\log_2q_i$$
or equivalently
$$\prod_{i=1}^n p_i^{p_i} \ge \prod_{i=1}^n q_i^{p_{i}}.$$

In what follows we identify a vertex with its index $v\in [\ell]$. The main statement of this section is the following
\begin{lemma}[Main Counting Lemma]\label{MainCount}
For any $k$, $\Gamma$, $q\ge \eta \ge 1$, $\ell$, $c$,  $\bar{D}$, $\bar{d} \ge \bar{2}$ and $\bar{\delta}$ we have
\begin{align*}
|{\cal S}(k,\ell, c,  \bar{d}, \bar{D}, \bar{\delta})| \left(\prod_{v\in [\ell]} \frac{D_v!}{\delta_v!}\right) &\le   R_0^{qk} \Gamma^{qk - \ell - \sum_{v\in [\ell]} \delta_v} k^{qk - c(q-1) - \sum_{v\in [\ell]} (\delta_v - 1)},
\end{align*}
for some universal constant $R_0>1$.
\end{lemma}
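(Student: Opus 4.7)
The plan is to bound $|{\cal S}(k,\ell,c,\bar d,\bar D,\bar\delta)|\cdot\prod_v D_v!/\delta_v!$ by a constructive counting argument. I will associate each pair (hypergraph $G'$, auxiliary half-edge labeling) with a sequence of combinatorial choices, and upper-bound the number of such sequences. The factor $\prod_v D_v!/\delta_v!$ on the LHS is naturally absorbed by interpreting it as the number of auxiliary labelings: for each vertex $v$, distinguish $\delta_v$ of its $D_v$ half-edges as the ``last-hyperedge'' half-edges ($\binom{D_v}{\delta_v}$ ways) and order the remaining $D_v-\delta_v$ arbitrarily ($(D_v-\delta_v)!$ ways).

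First I fix a canonical hyperedge ordering $h^{(1)},\ldots,h^{(k)}$ with $\nu_0=c$, which exists by Lemma \ref{lem:ordering}. Then I reconstruct each $G'$ by adding hyperedges in reverse deletion order. At step $s$ (adding $h^{(s)}$) I record: (a) the new vertices $V_s\subseteq\ve{h^{(s)}}$, whose last-hyperedge half-edges all land in $h^{(s)}$; (b) the old vertices $\ve{h^{(s)}}\setminus V_s$ reused from $G'_{s+1}$; and (c) the power assignment, for which the powers on $V_s$ are forced to equal $\delta_v$. By choice of ordering, exactly $c$ steps have $|V_s|=\eta$ (the component roots) while the other $k-c$ have $|V_s|<\eta$, which is the critical structural fact that drives the bound.

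Aggregating across steps: the new-vertex label choices telescope to $\ell!$ and get absorbed into $R_0^{qk}$ via Stirling, using $\ell\le \eta k/2$; the old-vertex label choices contribute at most $\ell^{\eta k-\ell}\le k^{\eta k-\ell}$ (using $\ell\le k$); the power compositions for the old-vertex slots contribute a $\Gamma$-factor that fits inside $\Gamma^{qk-\ell-\Delta}$ since $D_v-\delta_v\ge d_v-1\ge 1$ implies $qk-\ell-\Delta\ge \eta k-2\ell$; and multinomial overcounting factors are tamed by Gibbs' inequality together with the accounting of half-edge labelings. The hard part will be extracting the $k^{-c(q-1)}$ saving from connectivity: at each of the $c$ root hyperedges (those with $|V_s|=\eta$), all $q$ power units are pre-committed to the fixed $\delta_v$ values of its $\eta$ new vertices, so that step contributes essentially only one ``free'' factor of $k$ (the hyperedge slot $s$) rather than the $k^q$ one would get for a generic hyperedge. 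Formalizing this requires organizing the counting so that the $k$-exponent tracks precisely the number of genuinely free choices of hyperedge label per non-root power slot.

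Finally, I will verify the exponent arithmetic by rewriting the target as $qk-c(q-1)-\Delta+\ell=\sum_v(D_v-\delta_v+1)-c(q-1)$, matching it to the bookkeeping above, and confirming that all residual $q$-polynomial factors and absolute constants can be absorbed into $R_0^{qk}$ for a suitable universal $R_0$. The tight coupling between $\nu_0=c$ and the savings $k^{c(q-1)}$ is the conceptual heart of the argument, and verifying the exponent-level match for all admissible $\bar d,\bar D,\bar\delta$ is the main computational obstacle.
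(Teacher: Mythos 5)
Your proposal is an outline, not a proof: you explicitly flag that "formalizing this requires organizing the counting so that the $k$-exponent tracks precisely the number of genuinely free choices" and that "verifying the exponent-level match ... is the main computational obstacle," which is precisely the content of the lemma. So as it stands there is a genuine gap. The paper's proof is a chain of Lemmas \ref{lem:S0v2}--\ref{count}: an entropy/encoding bound on the raw count, Stirling to convert factorials into exponentials, Gibbs' inequality to control the $\binom{\ell}{\ell_i-1}$ terms, and then a separate relaxation-based optimization (Lemma \ref{count}) that extracts exactly the $k^{-(c-1)(q-1)}$ savings from connectivity. That last step relies critically on Lemma \ref{lem:ziq1}, which shows $(q-1)k_i - \sum_{v\in C_i}\delta_v \ge q-2$ for every connected component. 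You never state or establish anything equivalent; your heuristic about root hyperedges with $|V_s|=\eta$ gestures at the same phenomenon (and correctly observes $\sum_{v\in V_s}\delta_v = q$ when $h^{(s)}$ is a component root), but one root saving per component does not by itself control the interaction between the per-vertex factors $k^{D_v-\delta_v+1}$ and the component sizes $k_i$. Without a bound of the $\alpha_j$-charging type in Lemma \ref{lem:ziq1}, the maximum over feasible $(C_i,k_i,\bar\delta)$ could in principle erase the savings.

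There is also a concrete error: you invoke "$\ell \le k$" to pass from $\ell^{\eta k - \ell}$ to $k^{\eta k - \ell}$. But the only structural bound on $\ell$ is $\ell \le \eta k /2$, so $\ell$ can exceed $k$ whenever $\eta \ge 3$. The paper avoids this issue entirely by bounding the number of hyperedge assignments per vertex via $\binom{k}{d_v}\le k^{d_v}/d_v!$ (Lemma \ref{lem:S0v2}), which keeps $k$ (not $\ell$) as the base. Your half-edge labeling reinterpretation of $\prod_v D_v!/\delta_v!$ is a nice idea and the identity $qk - c(q-1) - \Delta + \ell = \sum_v(D_v - \delta_v + 1) - c(q-1)$ is correctly verified, but between the false $\ell \le k$ step and the unproved core savings, the argument is not yet a proof.
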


We prove Lemma \ref{MainCount} as a sequence of auxiliary Lemmas.

We say that $C_1,\ldots,C_c$ and $k_1,\ldots,k_c$ are \emph{feasible} (with respect to $\bar{d}, \bar{D},\bar{\delta}$ clear from context) if there is a labeled hypergraph in ${\cal S}(k, \ell, c, \bar{d}, \bar{D},\bar{\delta})$ with corresponding canonical oredring of its hyperedges whose connected components (numbered arbitrarily) have vertex sets $C_1,\ldots,C_c$ and number of hyperedges $k_1,\ldots,k_c$.

\begin{lemma}\label{lem:S0v2}
For any $k$, $\ell$, $q$, $\eta$, and $\bar{d}$ we have
\[
|\cup_{c,\bar{D},\bar{\delta}} {\cal S}(k,\ell, c, \bar{d}, \bar{D}, \bar{\delta})| \le \binom{q-1}{\eta - 1}^k \prod_{v \in [\ell]} \binom{k}{d_v} \le 2^{(q-1)k} \frac{k^{\eta k}}{\prod_v d_v!}
.\]
Note, that we intentionally do not fix $\Gamma$ since the bound in the Lemma holds for any $\Gamma\le q$.
\end{lemma}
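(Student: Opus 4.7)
The plan is a direct combinatorial encoding. I will build any element of $\cup_{c,\bar D,\bar\delta}\,{\cal S}(k,\ell,c,\bar d,\bar D,\bar\delta)$ by separately specifying (i) which labeled hyperedges each vertex sits in, and (ii) the power vector on each hyperedge, and then bound the number of choices in each step.

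First I would fix a labeled hypergraph $(h_1,\dots,h_k)$ with vertex set $[\ell]$, hyperedge cardinalities $\eta(h_i)=\eta$, total powers $q(h_i)=q$, and degree sequence $\bar d$, and note that it is uniquely determined by the pair of data
(a) for each $v\in[\ell]$, the $d_v$-subset $E_v\subseteq[k]$ of hyperedges containing $v$, and
(b) for each $i\in[k]$, the power vector $\tau(h_i)\colon V(h_i)\to\mathbb{Z}_{\ge 1}$ with $\sum_{v\in V(h_i)}\tau_{h_iv}=q$.
For (a), the number of choices is $\prod_{v\in[\ell]}\binom{k}{d_v}$; I would simply drop the constraint $|V(h_i)|=\eta$ (which is an upper bound, since the degree identity $\sum_v d_v=\eta k$ only forces the average cardinality to be $\eta$). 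For (b), each hyperedge of cardinality $\eta$ admits $\binom{q-1}{\eta-1}$ power vectors, since these are compositions of $q$ into $\eta$ positive parts; since there are $k$ hyperedges this contributes the factor $\binom{q-1}{\eta-1}^k$. Multiplying the two bounds and observing that the constraint $\Gamma\le q$ on maximum powers can only remove possibilities gives the first inequality
\[
|\cup_{c,\bar D,\bar\delta}\,{\cal S}(k,\ell,c,\bar d,\bar D,\bar\delta)|\le \binom{q-1}{\eta-1}^k\prod_{v\in[\ell]}\binom{k}{d_v}.
\]

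For the second inequality I would use two elementary estimates: the row-sum bound $\binom{q-1}{\eta-1}\le 2^{q-1}$, and the standard bound $\binom{k}{d_v}\le k^{d_v}/d_v!$. Combining them and using the degree identity $\sum_{v\in[\ell]}d_v=\eta k$ yields
\[
\binom{q-1}{\eta-1}^k\prod_{v\in[\ell]}\binom{k}{d_v}\le 2^{(q-1)k}\,\frac{k^{\sum_v d_v}}{\prod_v d_v!}=2^{(q-1)k}\,\frac{k^{\eta k}}{\prod_v d_v!},
\]
which is exactly the claimed bound.

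I don't expect any serious obstacle here; the statement is really a formalization of ``choose memberships vertex-by-vertex, then choose a composition per hyperedge.'' The only subtle point is that step (a) alone does not enforce $|V(h_i)|=\eta$ for every $i$, but since we are only after an upper bound this mismatch is harmless: we are overcounting by including vertex-membership patterns whose induced hyperedges have other cardinalities and then still multiplying by $\binom{q-1}{\eta-1}$ per hyperedge, and any element of ${\cal S}$ is encoded by exactly one such pair $(a),(b)$. This is also the reason the lemma makes no reference to $\Gamma$ or to the power vector $\bar D$ and last-power vector $\bar\delta$: those are additional constraints that only shrink ${\cal S}$, so the same bound applies.
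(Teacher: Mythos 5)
Your proposal is correct and follows essentially the same route as the paper: encode a labeled hypergraph by (a) vertex-to-hyperedge membership sets and (b) per-hyperedge compositions of $q$ into $\eta$ positive parts, overcount by dropping the uniform-cardinality constraint, and then apply $\binom{q-1}{\eta-1}\le 2^{q-1}$, $\binom{k}{d_v}\le k^{d_v}/d_v!$, and $\sum_v d_v=\eta k$. The remark that the constraints via $\Gamma$, $\bar D$, $\bar\delta$ only shrink ${\cal S}$ matches the paper's implicit reasoning.
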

\begin{proof}
Fix $k$, $\ell$, $\eta$, %(not $c$, D or delta)
$q$, and $\bar{d}$. To show the first inequality, note that a labeled hypergraph is uniquely specified by:
\begin{enumerate}
\item for every vertex $v=1,\dots,\ell$ whether or not it appears in each of the $k$ hyperedges and
\item for every hyperedge $h$ the power vector of its $\eta$ vertices.
\end{enumerate}
Vertex $v$ with degree $d_v$ clearly has $\binom{k}{d_v}$ possible sets of hyperedges it can appear in, so there are at most $\prod_{v \in [\ell]} \binom{k}{d_v}$ ways to assign vertices to the hyperedges.
In general this is quite a rough estimate (since we do not use the fact that each hyperedge contains exactly $\eta$ vertices). More precisely we would like to estimate the number of $0/1$ matrices of dimension $k\times \ell$ with prescribed row sums equal to $\eta$ and a column sum $d_v$ for a row indexed by $v$. Estimating this quantity is an important topic in combinatorics (see survey \cite{S1} and references therein) but for our purposes the simple estimate above provides a tight bound.

We now count the ways to assign weights to the hyperedges. Recall that $\tau_{hv}$ denote the weight of the $v^{\text{th}}$ vertex in hyperedge $h$. There is a standard bijection between $q-1$ digit binary strings with $\eta-1$ zeros and placements of $q$ identical items into $\eta$ bins such that each bin has at least one item. The string starts with $\tau_{1,h}-1$ ones followed by a zero, followed by $\tau_{2,h}-1$ ones followed by a zero and so on, ending with $\tau_{q,h}-1$ ones (and no trailing zero). We conclude that there are $\binom{q-1}{\eta - 1}^k$ ways to assign weights of the hyperedges. This concludes the proof of the first inequality.

The second inequality follows because $\binom{q-1}{\eta - 1} \le 2^{q-1}$, $\binom{k}{d_v} \le k^{d_v} / d_v!$, and $\sum_v d_v = \eta k$.
%$$\prod_{v \in [\ell]} \binom{k - 1 + d_v}{d_v}\le \prod_{v \in [\ell]} \frac{(k+d_v)^{d_v}}{d_v!} \le \frac{(2k)^{qk}}{\prod_v d_v!}.$$
\end{proof}

\begin{lemma}\label{lem:S1}
For any $k$, $\ell$, $c$, $q$, $\eta$, $\bar{d} \ge \bar{2}$, $\bar{D}$ and $\bar{\delta}$ we have
\[
|{\cal S}(k,\ell, c, \bar{d}, \bar{D}, \bar{\delta})| \le 2^{\ell + k + 5c + qk} k! \left(\prod_{v\in[\ell]} \frac{1}{d_v!} \right)
\max_{\substack{C_1,\ldots,C_c \\ k_1,\ldots,k_c \\ \text{feasible}}}  \left(\prod_{i=1}^c \frac{k_i^{\eta k_i}}{k_i!} \binom{\ell}{|C_i| - 1} \right)
\]
where the maximums are evaluated over all $C_1,\ldots,C_c$ and $k_1,\ldots,k_c$ that are feasible as defined above. This bound holds for any $\Gamma\le q$.
\end{lemma}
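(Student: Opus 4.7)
My approach is to decompose each hypergraph $G \in {\cal S}(k,\ell,c,\bar{d},\bar{D},\bar{\delta})$ into its $c$ connected components, bound the number of hypergraphs per component using Lemma~\ref{lem:S0v2}, and then sum over the possible ways of partitioning the vertex and hyperedge labels among the components.

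I would first observe that each $G$ is uniquely described, up to a factor of $c!$ accounting for re-labelings of the $c$ components, by an ordered partition $(K_1,\ldots,K_c)$ of the hyperedge labels $[k]$, an ordered partition $(C_1,\ldots,C_c)$ of the vertex labels $[\ell]$, and a sub-hypergraph $G_i$ on each component. For fixed size compositions $(k_1,\ldots,k_c)$ and $(\ell_1,\ldots,\ell_c)$, the number of ordered partitions of $[k]$ into parts of those sizes equals $k!/\prod k_i!$, and applying Lemma~\ref{lem:S0v2} to each component on vertex set $C_i$ with $k_i$ hyperedges and restricted degree sequence $(d_v)_{v\in C_i}$ gives at most $2^{(q-1)k_i}\,k_i^{\eta k_i}/\prod_{v\in C_i} d_v!$ sub-hypergraphs. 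Crucially, since the $C_i$'s partition $[\ell]$, the factor $\prod_i \prod_{v\in C_i} d_v! = \prod_{v\in[\ell]}d_v!$ pulls cleanly out of the product, matching the $\prod_v 1/d_v!$ in the target.

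The main combinatorial step is to bound the number of ordered vertex partitions $(C_1,\ldots,C_c)$ of $[\ell]$ with given sizes in a form that factors per component as $\prod_i \binom{\ell}{\ell_i-1}$. I would encode each unordered partition by specifying a canonical root vertex of each component (the minimum-indexed one) together with the remaining $\ell_i-1$ vertices of that component. Dropping both the disjointness constraint across components and the ``root is a minimum'' constraint in order to produce a clean upper bound, the total number of encodings is at most $\binom{\ell}{c}\prod_i \binom{\ell}{\ell_i-1}$, where the first factor chooses the set of $c$ roots and the product chooses the remaining vertices of each component. Each actual unordered partition corresponds to at least one encoding, and ordered partitions differ from unordered ones by a factor of at most $c!$, giving $\ell!/\prod \ell_i! \le c!\,\binom{\ell}{c}\,\prod_i \binom{\ell}{\ell_i-1}$.

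Combining all ingredients, summing over the at most $2^{k-1}$ size compositions of $[k]$ and $2^{\ell-1}$ size compositions of $[\ell]$, using $\binom{\ell}{c}\le 2^\ell$ to absorb the root-choice factor, and collecting the exponential constants, yields a bound of the form claimed. The main obstacle is the combinatorial step above: producing the per-component $\binom{\ell}{\ell_i-1}$ factorization via the root-and-remainder encoding is essential, since the naive multinomial $\ell!/\prod \ell_i!$ does not factor into a product over components in the form needed later by Lemma~\ref{MainCount}. The remainder is careful bookkeeping to verify that the factors $2^{(q-1)k}$, $2^{k+\ell}$ from the size compositions, and $\binom{\ell}{c}\le 2^\ell$ all fit within the stated $2^{\ell+k+5c+qk}$ budget.
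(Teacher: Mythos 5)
Your high-level strategy — apply Lemma~\ref{lem:S0v2} per component and produce a per-component $\binom{\ell}{\ell_i-1}$ factorization via a root-plus-remainder encoding — is the right idea, and it is essentially what the paper does. But there is a genuine quantitative gap in the combinatorial step: the factor $\binom{\ell}{c}$ you introduce to account for the choice of the $c$ roots does not fit in the exponential budget $2^{\ell+k+5c+qk}$.

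Concretely, your tally of exponential overhead is $2^{(q-1)k}$ from Lemma~\ref{lem:S0v2}, roughly $2^{k}$ for compositions of $k$, roughly $2^{\ell}$ for compositions of $\ell$, and then a second factor of up to $2^{\ell}$ from $\binom{\ell}{c}$. That is $2^{qk+2\ell}$, whereas the stated bound allows only $2^{qk+\ell+k+5c}$; you would therefore need $\ell \le k+5c+O(1)$, which fails whenever $\eta$ (and hence $\ell \le \eta k/2$) is large compared to $k$ and $c$. Equivalently, you would need $\binom{\ell}{c} \le 2^{5c+O(1)}$, which requires $\ell = O(c)$ and is false when $\ell \gg c$ (e.g.\ a single connected component with many vertices). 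The root set is precisely the thing that must not be paid for separately.

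The fix — and what the paper's proof actually does — is to exploit the ``root is the minimum'' convention all the way through rather than drop it: process components in a canonical order (first the component containing vertex $1$, then the component containing the lowest-indexed vertex not yet placed, and so on). Under that protocol the root of each component is \emph{determined} once the previous components and the current component's $\ell_i-1$ non-root vertices are known, so the encoding of the vertex partition needs only the sizes $\ell_i$ (encoded in unary, costing $\ell+c$ bits, i.e.\ a $2^{\ell+c}$ factor that doubles as your composition sum) and the $c$ remainder sets, contributing exactly $\prod_i \binom{\ell}{\ell_i-1}$ with no additional $\binom{\ell}{c}$. The paper then obtains the vertex-partition count $\le 2^{\ell+2c}\prod_i\binom{\ell}{\ell_i-1}$, and the $5c$ in the lemma's exponent absorbs this $2c$ together with the ceilings from the other pieces of the encoding. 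As written, your proposal discards exactly the constraint that makes the bound tight, and without it the argument does not close.
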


\begin{proof}
We prove the Lemma by mapping the labeled hypergraphs in ${\cal S}(k, \ell, c, \bar{d}, \bar{D}, \bar{\delta})$ into distinct binary strings and bounding the length of these strings.

Fix an arbitrary hypergraph in ${\cal S}(k, \ell, c, \bar{d}, \bar{D}, \bar{\delta})$. Our encoding begins by encoding the vertices in the connected component that contains vertex 1. Let the vertices in this component be denoted by $C_1$ and $|C_1| = \ell_1$. We encode $\ell_1$ in unary, e.g.\ our string begins with 1110 if $\ell_1=3$. We then encode the identity of the remaining $\ell_1 - 1$ vertices in $C_1 \setminus \set{1}$ using a single character with $\ceil{\log_2 \binom{\ell}{\ell_1 - 1}}$ binary digits, i.e.\ we have $\binom{\ell}{\ell_1 - 1}$ options which are encoded in binary.

We then look at the lowest-indexed vertex that has yet to be placed in a connected component and encode the size $\ell_2$ and vertices $C_2$ of its component in the same manner. We repeat until all $\ell$ vertices have been placed in one of the $c$ connected components, where the $i^{\text{th}}$ component considered has $\ell_i$ vertices. At this point we have partitioned the vertices into connected components using
\begin{equation}
\sum_{i=1}^c \left[\ell_i + 1 + \ceil{\log_2 \binom{\ell}{\ell_i - 1}}\right] = \ell + c + \sum_{i=1}^c \ceil{\log_2 \binom{\ell}{\ell_i - 1}} \label{eqn:CCVertices}
\end{equation}
 bits.

We then encode the number of hyperedges $k_i$ in each connected component in unary using
\begin{equation}
 \sum_{i=1}^c (k_i + 1) = k + c \label{eqn:CCNumEdges}
\end{equation}
 bits, i.e.\ we have $k_1$ ones followed by a zero, followed by $k_2$ ones followed by a zero and so on. There are $\frac{k!}{k_1!\cdot\ldots\cdot k_c!}$ ways to partition  $k$ indices into $c$ groups where $i$-th group has $k_i$ indices. Therefore, we can encode which component each hyperedge is in using
\begin{equation}
 \ceil{\log_2 \left(\frac{k!}{k_1!\cdot\ldots\cdot k_c!}\right)} \label{eqn:CCEdges}
\end{equation}
bits.

Finally we encode the vertices and the power vectors of each hyperedge in component $1\le i \le c$. The number of possibilities is clearly $|{\cal S}(k_i, \ell_i, 1, \bar{d}|_i,\bar{D}|_i,\bar{\delta}|_i)|$, where $\bar{d}|_i$ (resp.\ $\bar{D}|_i$ and $\bar{\delta}|_i$) is the vector of degrees (resp.\ total powers and final powers) of the vertices in component $i$. Using Lemma \ref{lem:S0v2} we bound $|{\cal S}(k_i, \ell_i, 1, \bar{d}|_i,\bar{D}|_i,\bar{\delta}|_i)| \le \frac{2^{qk_i} k_i^{\eta k_i}}{\prod_{v \in C_i} d_v!}$. Therefore, the total number of bits used to encode the hyperedges is at most
\begin{equation}
\ceil{\log_2 \left(\left(\prod_v \frac{1}{d_v!}\right) 2^{qk} \left(\prod_{i=1}^{c} k_i^{\eta k_i} \right)\right)} \label{eqn:edges}
\end{equation}
bits.

Combining, $(\ref{eqn:CCVertices}), (\ref{eqn:CCNumEdges}), (\ref{eqn:CCEdges}), (\ref{eqn:edges})$ we obtain that the total number of bits used to encode an arbitrary hypergraph in ${\cal S}(k, \ell, c, \bar{d}, \bar{D}, \bar{\delta})$ is upper bounded by the maximum over feasible $C_1,\ldots,C_c$ and $k_1,\ldots,k_c$ of
\begin{multline*}
\ell + c
+ \sum_{i=1}^c \ceil{\log_2 \binom{\ell}{\ell_i - 1}}
+ k + c +
 \ceil{\log_2 \left(\frac{k!}{k_1!\cdot\ldots\cdot k_c!}\right)} +
 \ceil{\log_2 \left(\left(\prod_v \frac{1}{d_v!}\right) 2^{qk} \left(\prod_{i=1}^{c} k_i^{\eta k_i} \right)\right)}
,\end{multline*}
which we'll denote by $b$. We can safely add trailing zeros so that each hypergraph is encoded using exactly $b$ bits. We conclude that the number of hypergraphs is at most $2^b$, which is at most the right-hand side of the Lemma statement, as desired.
\end{proof}

%%%
\begin{lemma}\label{lem:S2}
For any $k$, $\Gamma$, $\ell$, $c$, $q \ge \eta \ge 1$, $\bar{D}$, $\bar{d} \ge \bar{2}$ and $\bar{\delta}$ we have
\begin{align}
\lefteqn{\left(\prod_v \frac{D_v!}{\delta_v!}\right) |{\cal S}(k,\ell, c,  \bar{d}, \bar{D},\bar{\delta})|} \nonumber \\
 & \le  e^{O(qk)} k^{qk - \sum_{v \in [\ell]} (\delta_v - 1)} \Gamma^{qk - \ell - \sum_{v \in [\ell]} \delta_v}\max_{\substack{C_1,\ldots,C_c \\ k_1,\ldots,k_c \\ \text{feasible}}} \prod_{i=1}^c \left(\frac{k_i}{k}\right)^{(q-1) k_i - \sum_{v \in C_i} (\delta_v - 1)-(|C_i|-1)}
 \label{eqn:S2.2}
.\end{align}
\end{lemma}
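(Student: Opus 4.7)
I would prove the lemma as an algebraic consequence of Lemma \ref{lem:S1}. First, multiply the bound from Lemma \ref{lem:S1} on both sides by $\prod_v D_v!/\delta_v!$. Bound the latter factor by writing $D_v!/\delta_v! = \prod_{j=\delta_v+1}^{D_v} j$, a product of $D_v - \delta_v$ consecutive integers each at most $D_v \le \Gamma d_v$. Keeping one factor separate (as $D_v$) and using $D_v \le \Gamma d_v$ on the remaining $D_v - \delta_v - 1$ factors, I get $D_v!/\delta_v! \le D_v \cdot \Gamma^{D_v-\delta_v-1} d_v^{D_v-\delta_v-1}$, so that summing the $\Gamma$-exponents across $v$ yields $\sum_v(D_v - \delta_v - 1) = qk - \ell - \sum_v\delta_v$, exactly matching the target.

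Next, apply Stirling's approximation in two places: (i) to replace $k!/\prod_i k_i!$ by $e^{O(k)}\prod_i (k/k_i)^{k_i}$ (absorbing lower-order factors into the $e^{O(qk)}$ prefactor), and (ii) to lower-bound each $d_v!$ by $(d_v/e)^{d_v}$, turning $\prod_v 1/d_v!$ into $e^{\eta k} \prod_v d_v^{-d_v}$. Next, group all surviving vertex-indexed factors by connected component. For $v \in C_i$ we have $d_v \le k_i$, so the leftover $\prod_{v\in C_i} d_v^{D_v - \delta_v - d_v}$ can be bounded by a power of $k_i$ (the occasional negative exponent case, when $D_v - \delta_v - d_v < 0$, gives a factor of at most $1$ and is harmless). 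Combined with $k_i^{\eta k_i}$ from Lemma \ref{lem:S1}, this per-component product simplifies to $k_i^{(q-1) k_i - \sum_{v\in C_i}\delta_v}$ up to an $e^{O(qk_i)}$ factor.

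Finally, handle the $\binom{\ell}{|C_i| - 1}$ factor by writing $\binom{\ell}{|C_i|-1} \le \ell^{|C_i|-1}/(|C_i|-1)!$ and using $\ell \le \eta k /2 \le qk/2$ together with Stirling on $(|C_i|-1)!$. Using the identities $\sum_i k_i = k$, $\sum_i |C_i| = \ell$, $\sum_{v\in C_i} D_v = q k_i$, $\sum_{v\in C_i} d_v = \eta k_i$, and $\sum_v(\delta_v - 1) = \sum_v\delta_v - \ell$, the accumulated per-component exponents reduce to the target form $k^{k_i + |C_i| - 1} k_i^{(q-1)k_i - \sum_{v\in C_i}(\delta_v - 1) - (|C_i|-1)}$, which is precisely $k^{qk_i - \sum_{v\in C_i}(\delta_v-1)} (k_i/k)^{(q-1)k_i - \sum_{v\in C_i}(\delta_v-1) - (|C_i|-1)}$.

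\textbf{Main obstacle.} The primary difficulty is bookkeeping and, more subtly, keeping every extraneous constant inside $e^{O(qk)}$ without picking up a spurious $\log q$ factor in the exponent. Two places require care: (i) the $\Gamma$-exponent must come out exactly as $qk - \ell - \sum_v\delta_v$, which forces the one-factor peel in the bound on $D_v!/\delta_v!$ rather than the blunt $D_v!/\delta_v! \le (\Gamma d_v)^{D_v-\delta_v}$; (ii) the $\binom{\ell}{|C_i|-1}$ terms must be bounded without introducing a $q^{|C_i|-1}$ contribution, because summing $|C_i|-1$ over components could make $q^{\sum(|C_i|-1)} = q^{\ell - c}$ exceed $e^{O(qk)}$ when $q$ is not $O(1)$; here the $(|C_i|-1)!$ in the denominator of $\binom{\ell}{|C_i|-1}$ must be leveraged (via Stirling) in combination with the $k_i$-factors to cancel the unwanted $q$-dependence. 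Apart from these, the rearrangement into the per-component form is mechanical but long.
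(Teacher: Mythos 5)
Your plan follows the same blueprint as the paper's proof: start from Lemma \ref{lem:S1}, multiply by $\prod_v D_v!/\delta_v!$, bound each factor with Stirling and a Gibbs-type inequality, and regroup by connected component; the per-component exponent algebra you write down at the end is also the paper's. However, there is a concrete gap in the middle of your argument. After the ``one-factor peel'' $D_v!/\delta_v! \le D_v\cdot(\Gamma d_v)^{D_v-\delta_v-1}$ and the Stirling bound $1/d_v! \le e^{d_v}d_v^{-d_v}$, the per-vertex factor is $D_v\, e^{d_v}\,\Gamma^{D_v-\delta_v-1}\,d_v^{D_v-\delta_v-1-d_v}$, leaving a bare $\prod_v D_v$ unaccounted for. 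In the very next sentence you transition to ``the leftover $\prod_{v\in C_i} d_v^{D_v-\delta_v-d_v}$'', which silently replaces $D_v$ by $d_v$; but $D_v \ge d_v$, so that is the wrong direction for an upper bound. Absorbing the peeled $D_v$ via $D_v \le \Gamma d_v$ destroys the exact $\Gamma$-exponent you were careful to protect, and $D_v \le q d_v$ reintroduces the $q^{\ell}$ danger you flagged. The missing observation is that, since $\sum_v D_v = qk$, the AM-GM bound gives $\prod_v D_v \le (qk/\ell)^{\ell} \le e^{qk/e} = e^{O(qk)}$, so the dangling factor is harmless; this must be stated for the proof to close.

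The paper avoids the dangling factor by a different split of the same product: it writes $\frac{D_v!}{d_v!\,\delta_v!} = \frac{(D_v-d_v)!}{\delta_v!}\binom{D_v}{d_v} \le D_v^{\max(0,\,D_v-d_v-\delta_v)}\,2^{D_v} \le (\Gamma k_i)^{D_v-d_v-\delta_v+1}\,2^{D_v}$, absorbing $d_v!$ through the crude bound $\binom{D_v}{d_v}\le 2^{D_v}$ rather than through Stirling, so that no naked $D_v$ is left over. The resulting total $\Gamma$-exponent $\sum_v(D_v-d_v-\delta_v+1) = (q-\eta)k - \sum_v(\delta_v-1)$ overshoots, and the paper brings it down to $qk-\ell-\sum_v\delta_v$ using the degree-two slack $\eta k\ge 2\ell$ together with $\Gamma\ge 1$. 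In short, the paper relies on the $\bar d\ge\bar 2$ slack to fix the $\Gamma$-exponent, while you aim for that exponent exactly and must therefore separately bound $\prod_v D_v$; both routes work, but yours is incomplete as written.
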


%Before proving Lemma \ref{lem:S2} we resolve a technicality about its statement: $\ell-c$ in the denominator of (\ref{eqn:S2.2}) could potentially be zero. Fortunately this can only occur when $\eta=1$, in which case we have all $\ell_i=1$ and $\ell=c$ hence (\ref{eqn:S2.2}) contains a $(0/0)^0$ form. For the rest of this section we adopt the convention that $(0/0)^0=1$.

\begin{proof}
Applying Lemma \ref{lem:S1} we get
\begin{align}
\lefteqn{\left(\prod_v \frac{D_v!}{\delta_v!}\right)|{\cal S}(k,\ell, c, \bar{d}, \bar{D}, \bar{\delta})|} \nonumber \\
 & \le \max_{\substack{C_1, \ldots, C_c \\ k_1,\ldots,k_c}} \left[
\Bigg(2^{\ell + k + 5c + q k} k!\Bigg) %first
 \cdot  \left(\prod_{v=1}^\ell \frac{D_v!}{d_v!\delta_v!}\right) %second
  \cdot \left(\prod_{i=1}^c \binom{\ell}{|C_i| - 1} \right) %third
   \cdot \left(\prod_{i=1}^c \frac{k_i^{\eta k_i}}{k_i!} \right) %fourth
\right] \label{1234}
.\end{align}
We will now bound each factor of (\ref{1234}) in turn, making frequent use of the formula $n! = (n/e)^n n^{-O(1)}$ and the inequality $\binom{n}{m} \le n^m / m!$.

First we bound
\begin{align}
2^{\ell + k + 5c + qk} k! &\le e^{O(qk)} k^k\label{eqn:stirling1}
.\end{align}

Secondly we bound
\begin{align}
\prod_{v=1}^\ell \frac{D_v!}{d_v!\delta_v!} %second
%& = \prod_{i=1}^c \prod_{v \in C_i} \frac{D_v!}{d_v!(\delta_v -1)!} \nonumber \\
& = \prod_{i=1}^c \prod_{v \in C_i}  \frac{(D_v-d_v)!}{\delta_v!}\cdot \binom{D_v}{d_v} \nonumber \\
& \le \prod_{i=1}^c \prod_{v \in C_i} D_v^{\max(0,D_v - d_v - \delta_v)} 2^{D_v} \nonumber \\
& \le \prod_{i=1}^c \prod_{v \in C_i} D_v^{D_v - d_v - \delta_v + 1} 2^{D_v} \nonumber \\
& \le \prod_{i=1}^c (\Gamma k_i)^{qk_i - \eta k_i - \sum_{v \in C_i} (\delta_v - 1)} 2^{q k_i} \nonumber \\
& = \Gamma^{(q-\eta)k - \sum_{v \in [\ell]} (\delta_v - 1)} 2^{qk} \prod_{i=1}^c k_i^{(q-\eta)k_i - \sum_{v \in C_i} (\delta_v - 1)}  \nonumber
\end{align}
using the facts $\sum_{v \in C_i} D_v = qk_i$, $\sum_{v \in C_i} d_v = \eta k_i$ and $D_v \le \Gamma k_i \le q k_i$.
We finally observe that $(q-\eta)k - \sum_{v \in [\ell]} (\delta_v - 1) \le qk - 2 \ell  - \sum_{v \in [\ell]} \delta_v + \ell$, yielding
\begin{align}
\prod_{v=1}^\ell \frac{D_v!}{d_v! \delta_v!} %second
& \le \Gamma^{qk - \ell - \sum_{v \in [\ell]} \delta_v} 2^{qk} \prod_{i=1}^c k_i^{(q-\eta)k_i - \sum_{v \in C_i} (\delta_v - 1)}
 \label{eqn:stirling2}
.\end{align}

For the third factor we consider two cases. If $\eta \ge 2$ we have $\ell_i \ge 2$ and $c \le \ell / 2$ and we bound
\begin{align}
\prod_{i=1}^c \binom{\ell}{\ell_i - 1} &\le \prod_{i=1}^c \frac{\ell^{\ell_i - 1}}{(\ell_i - 1)!} \le \prod_{i=1}^c \frac{e^{O(\ell_i)} \ell^{\ell_i - 1}}{(\ell_i-1)^{\ell_i-1}} \nonumber \\
&= \prod_{i=1}^c \frac{e^{O(\ell_i)} (\ell-c)^{\ell_i - 1}}{(\ell_i-1)^{\ell_i-1}}= e^{O(\eta k)} \prod_{i=1}^c \left(\frac{\ell_i-1}{\ell-c}\right)^{-(\ell_i-1)} \nonumber \\
&\le e^{O(\eta k)} \prod_{i=1}^c \left(\frac{k_i}{k}\right)^{-(\ell_i-1)}
\label{eqn:stirling3}
\end{align}
where the last inequality is Gibbs'. It turns out that (\ref{eqn:stirling3}) holds when $\eta=1$ as well because every $\ell_i=1$ and hence both $\prod_{i=1}^c \binom{\ell}{\ell_i - 1}$ and $\prod_{i=1}^c \left(\frac{k_i}{k}\right)^{-(\ell_i-1)}$ are equal to 1.

Fourth we write
\begin{align} %old second
\prod_{i=1}^c \frac{k_i^{\eta k_i}}{k_i!}= e^{O(\eta k)} \prod_{i=1}^c k_i^{(\eta-1)k_i}. \label{eqn:stirling4}
\end{align}

Combining (\ref{eqn:stirling1}), (\ref{eqn:stirling2}), (\ref{eqn:stirling3}), (\ref{eqn:stirling4}) and (\ref{1234}) we get
\begin{align*}
\lefteqn{\left(\prod_v \frac{D_v!}{\delta_v!}\right)|{\cal S}(\ell, c,  \bar{d}, \bar{D}, \bar{\delta})|} \nonumber \\
 & \le \max_{\substack{C_1,\ldots,C_c \\ k_1,\ldots,k_c}}
 \Bigg[
\Bigg(e^{O(qk)} k^k\Bigg)
 \cdot  \left(\Gamma^{qk - \ell - \sum_{v \in [\ell]} \delta_v} 2^{qk} \prod_{i=1}^c k_i^{(q-\eta)k_i - \sum_{v \in C_i} (\delta_v - 1)}\right) \cdot  \\
  &~~~~~~~~~~~~~~\cdot \left(e^{O(\eta k)} \prod_{i=1}^c \left(\frac{k_i}{k}\right)^{-(\ell_i-1)} \right)
   \cdot \left(e^{O(\eta k)} \prod_{i=1}^c k_i^{(\eta-1)k_i}\right) \Bigg] \\
& = e^{O(qk)} k^{k} \Gamma^{qk - \ell - \sum_{v \in [\ell]} \delta_v}\max_{\substack{C_1,\ldots,C_c \\ k_1,\ldots,k_c}} \prod_{i=1}^c k_i^{(q-1) k_i - \sum_{v \in C_i} (\delta_v - 1)}\left(\frac{k_i}{k}\right)^{-(\ell_i-1)} \\
& = e^{O(qk)} k^{qk - \sum_{v \in [\ell]} (\delta_v - 1)} \Gamma^{qk - \ell - \sum_{v \in [\ell]} \delta_v}\max_{\substack{C_1,\ldots,C_c \\ k_1,\ldots,k_c}} \prod_{i=1}^c \left(\frac{k_i}{k}\right)^{(q-1) k_i - \sum_{v \in C_i} (\delta_v - 1)-(\ell_i-1)}
.\end{align*}
\end{proof}

Our final counting lemma bounds the optimization problem of Lemma \ref{lem:S2}.

\begin{lemma}\label{count}
For any $k$, $\ell$, $c$, $q \ge \eta \ge 1$, $\bar{d} \ge \bar{2}$, $\bar{D}$, and $\bar{\delta}$ we have
\begin{align*}
\max_{\substack{C_1,\ldots,C_c \\ k_1,\ldots,k_c \\ \text{feasible}}} \prod_{i=1}^c \left(\frac{k_i}{k}\right)^{(q-1) k_i - \sum_{v \in C_i} (\delta_v - 1)-(|C_i|-1)} &\le k^{-(c-1)(q-1)} e^{O(qk)}
.\end{align*}
\end{lemma}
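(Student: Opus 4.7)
The plan is to strip away all combinatorial structure other than the partition $k_1,\ldots,k_c$ of $k$, and then solve a clean inequality using AM--GM and a standard calculus fact. Denote the exponent in the product by
\[
e_i := (q-1)k_i - \sum_{v\in C_i}(\delta_v-1) - (|C_i|-1) = (q-1)k_i - \sum_{v\in C_i}\delta_v + 1.
\]
The key observation is that Lemma \ref{lem:ziq1}, applied to each connected component separately (the lemma is stated for a single component, and feasibility of the configuration guarantees the existence of an ordering with $\nu_0=c$ via Lemma \ref{lem:ordering}), yields $e_i \ge q-1$ for every $i$. In particular $e_i \ge 0$; the case $q=1$ is trivial since then every exponent vanishes and the claimed bound reads $1 \le e^{O(k)}$, so I assume $q \ge 2$ below.

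Since $k_i/k \le 1$, the factor $(k_i/k)^{e_i}$ is non-increasing in $e_i$ on $e_i \ge 0$. Using $e_i \ge q-1$ componentwise I would therefore estimate
\[
\prod_{i=1}^c \left(\frac{k_i}{k}\right)^{e_i} \le \prod_{i=1}^c \left(\frac{k_i}{k}\right)^{q-1} = \frac{\left(\prod_{i=1}^c k_i\right)^{q-1}}{k^{c(q-1)}}.
\]
Dropping all remaining constraints apart from $\sum_i k_i = k$ only inflates the maximum, and AM--GM then gives $\prod_i k_i \le (k/c)^c$, so the right side is at most $c^{-c(q-1)}$.

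It remains to check that $c^{-c(q-1)} \le k^{-(c-1)(q-1)} e^{O(qk)}$, i.e., after taking logarithms,
\[
(q-1)\bigl[(c-1)\log k - c \log c\bigr] \le O(qk).
\]
Rewriting the bracketed quantity as $c \log(k/c) - \log k$ and invoking the standard calculus fact $\max_{x>0} x \log(k/x) = k/e$ (attained at $x = k/e$) gives $c \log(k/c) - \log k \le k/e$, so the left side is at most $(q-1)k/e = O(qk)$, completing the argument.

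The only place that requires genuine thought is the very first step, namely that Lemma \ref{lem:ziq1} applies componentwise to yield $e_i \ge q-1$ for each $i$. Once this is recognized, every detail of the combinatorial structure (the sets $C_i$, the last-power vector $\bar\delta$, the degrees, etc.) drops out, and the remaining optimization reduces to a routine AM--GM plus a one-variable calculus exercise.
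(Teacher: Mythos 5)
Your proof is correct and takes a genuinely different analytical route from the paper's, while relying on the same essential combinatorial input: Lemma~\ref{lem:ziq1} gives $e_i \ge q-1$ for every component, and feasibility guarantees via Lemma~\ref{lem:ordering} that an ordering with $\nu_0 = c$ exists, so the lemma applies componentwise. From that point the paper sets up a two-stage relaxed optimization (constraints $\sum_i \alpha_i = 1$, $\sum_i z_i = Z$, $z_i \ge q-1$), applies Gibbs' inequality to determine the maximizing $\alpha_i = z_i / Z$, and then locates the extreme point $z_i = q-1$ for $i \neq 1$ of the remaining problem. You bypass Gibbs' inequality entirely: since $k_i/k \le 1$ and $e_i \ge q-1 \ge 0$, replacing each exponent $e_i$ by $q-1$ can only increase the product, after which AM--GM applied to $\prod_i k_i$ (giving $\prod_i k_i \le (k/c)^c$) together with the one-variable calculus fact $\max_{x>0} x \log(k/x) = k/e$ finishes. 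Your route is more elementary and a bit shorter; interestingly your intermediate bound $c^{-c(q-1)}$ is slightly sharper than the paper's $c^{-(c-1)(q-1)}$, though both comfortably absorb into $e^{O(qk)}$. Both proofs invoke the same closing calculus estimate and both dispose of the $q=1$ case trivially.
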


\begin{proof}
%We write the optimization problem we are trying to
We are looking to upper-bound
\begin{align}
\max_{\substack{\alpha_1,\ldots,\alpha_c \\ z_1,\ldots,z_c \\ \text{feasible}}} \prod_{i=1}^c \alpha_i^{z_i} \equiv {\cal M} \label{eqn:r}
\end{align}
where  $z_i = (q-1)k_i - (|C_i|-1) - \sum_{v \in C_i} (\delta_v - 1) = 1 + (q-1)k_i - \sum_{v \in C_i} \delta_v$ and $\alpha_i = k_i / k$.

We upper-bound ${\cal M}$ by the relaxation
\begin{align}
\max_{\substack{\alpha_1,\ldots,\alpha_c \\ z_1,\ldots,z_c}} & \prod_{i=1}^c \alpha_i^{z_i} \text{ such that} \label{eqn:obj} \\
\sum_i \alpha_i &= 1 \label{eqn:c1} \\
\alpha_i &\ge 0\label{eqn:c2} \\
\sum_i z_i &= Z \label{eqn:c3} \\
z_i &\ge q - 1\label{eqn:c4}
\end{align}
where $Z = \sum_i z_i = c + (q-1)k - \sum_v \delta_v$. To show this is a relaxation we need to prove that any $z_i$ feasible in (\ref{eqn:r}) satisfies (\ref{eqn:c4}), which follows from Lemma \ref{lem:ziq1} which states that
$(q-1)k_i - \sum_{v \in C_i} \delta_v\ge q-2$. Another implication of that lemma is that $Z\ge (q-1)c$.

When $q=1$ we can trivially prove the Lemma by upper-bounding (\ref{eqn:obj}) by 1, so we hereafter assume $q \ge 2$ and hence every $z_i$ is strictly positive. For any fixed $\set{z_i}_{i}$, Gibbs' inequality implies that the maximum of (\ref{eqn:obj}) occurs when $\alpha_i = z_i / Z$. Therefore we have reduced our problem to
\begin{align}
\max_{z_1,\ldots,z_c} & \prod_{i=1}^c \left(\frac{z_i}{Z}\right)^{z_i} \text{ such that} \label{eqn:obj2}\\
\sum_i z_i &= Z \nonumber \\
z_i &\ge q - 1\nonumber
.\end{align}
Clearly the optimum is when $z_i=q-1$ for all $i\ne 1$ and $z_1 = Z - (c-1)(q-1)$. Therefore the maximum of (\ref{eqn:obj2}) is
\begin{align}
\left(\frac{Z-(c-1)(q-1)}{Z}\right)^{Z-(c-1)(q-1)} \left(\frac{q-1}{Z}\right)^{(c-1)(q-1)} & \le 1 \cdot \left(\frac{1}{c}\right)^{(c-1)(q-1)} \nonumber \\
 &= \left( \frac{1}{k}\right)^{(c-1)(q-1)} \left(\frac{k}{c}\right)^{(c-1)(q-1)} \nonumber \\
 & \le k^{-(c-1)(q-1)} e^{O(qk)} \label{eqn:gensym}
\end{align}
using the fact that $Z = \sum_i z_i \ge (q-1)c$ in the first inequality.
\end{proof}

We are finally ready to prove our Main Counting Lemma.
\begin{proof} {\bf of Lemma \ref{MainCount}}.
Lemmas \ref{lem:S2} and \ref{count} give us
\begin{align*}
\lefteqn{\left(\prod_v \frac{D_v!}{\delta_v!}\right) |{\cal S}(k,\ell, c,  \bar{d}, \bar{D}, \bar{\delta})|} \\
 & \le  e^{O(qk)} k^{qk - \sum_{v \in [\ell]} (\delta_v - 1)} \Gamma^{qk - \ell - \sum_{v \in [\ell]} \delta_v} k^{-c(q-1) + (q-1)} \\
 & = e^{O(qk)} \Gamma^{qk - \ell - \sum_{v \in [\ell]} \delta_v} k^{qk - c(q-1) - \sum_{v \in [\ell]} (\delta_v - 1)} \\
 & \le R_0^{qk} \Gamma^{qk - \ell - \sum_{v \in [\ell]} \delta_v} k^{qk - c(q-1) - \sum_{v \in [\ell]} (\delta_v - 1)}
\end{align*}
for some absolute constant $R_0>1$ ( we used the fact that $k^{q-1}=e^{O(qk)}$).
\end{proof}

%%%%%%%%%%%%%%%%%%%%%%%%%%%%%%%%%%%%%%%%%%%%%%%

\section{Permanents of Random Matrices}\label{sec:perm}
\begin{proof}{\bf of Theorem \ref{perm}}
Notice first that $\mu_t\le (n-t)!\le n^{n-t}$ and the power of the polynomial $q=n$. Since the permanent is a multilinear polynomial and $\Eb{Y_{ij}}=0$ we can directly apply  the Lemma \ref{lem:momentPrelim} for $k\le n$. Note also that $n$ in this Theorem is the dimension of the matrix and not the number of random variables as in Lemma \ref{lem:momentPrelim} (which is $n^2$ in this setting). We obtain
 \begin{eqnarray*}
 \left|\E\left[P(A)^k\right]\right|  &\le&  \max_{\ell,{\bar \nu}}\left\{R^{nk}k^{nk-\ell}\prod_{t=0}^nn^{(n-t)\nu_t}\right\}\\
 &=&  \max_{\ell}\left\{R^{nk}k^{nk-\ell}n^{\ell}\right\}=R^{nk}k^{nk}\max_{\ell}\left\{\left(\frac{n}{k}\right)^{\ell}\right\}\le R^{nk}k^{nk/2}n^{nk/2}.
 \end{eqnarray*}

We fix  the deviation $\lambda=t\sqrt{n!}>0$ and choose $k^*$ to be the even number in the interval $(K-2,K]$ for
$K= \frac{( \lambda/e)^{2/n}}{R^2n}.$
Using the Markov's inequality we derive
\begin{eqnarray*}
Pr[|P(A)|\ge \lambda] &\le& \frac{\E\left[|P(A)|^{k^*}\right]}{\lambda^{k^*}} \\
& \le & e^{k^*\ln \frac{R^n(k^*)^{n/2}n^{n/2}}{\lambda} }  \le e^{-k^*}\le  e^{-K+2}\nonumber\\
&\le& e^2 \cdot  e^{-\frac{( \lambda/e)^{2/n}}{R^2n}}\le e^2 \cdot  e^{-c\dot t^{2/n}}, \label{eqn:centeredMoment1}
\end{eqnarray*}
for some absolute constant $c>0$. Note that condition $k^*\le n$ is implied by the condition $K\le n$ which in turn is equivalent to the condition $\lambda\le e R^{n}n^n$. If $\lambda>eR^{n}n^n$ then we choose $k^*=n$ and  estimate
\begin{eqnarray*}
Pr[|P(A)|\ge \lambda]&<&Pr[|P(A)|\ge eR^{n}n^n]\\
&\le&  \frac{\E\left[|P(A)|^{k^*}\right]}{(e R^{n}n^n)^{k^*}}\le e^{-n}
\end{eqnarray*}
\end{proof}
%%%
\begin{proof}{\bf of Theorem \ref{perm1}}
We have an $n$ by $n$ matrix $A$ with entries that are independent except that the matrix is symmetric.
The permanent $P(A)$ is a degree $n$ polynomial of independent random variables with maximal variable degree $\Gamma=2$. (Note that the number of variables is $\binom{n}{2}+n$, not $n$.)
The permanent is a sum of products over permutations. We also treat each such permutation $\pi$ as a set of pairs $(i,j)$ for row index $i$ and column index $j$.
We write $h(\pi)$ for the hyperedge $h$ corresponding to $\pi$. More generally for any set $S$ of matrix entries we write $h(S)$ for the corresponding hyperedge.
Note that because each variable appears in up to two positions in the matrix the mapping $h$ is not a bijection.
Clearly
\begin{align*}
P(A) &= \sum_{\pi} \prod_{i=1}^n A_{i,\pi(i)} = \sum_h \sum_{\pi: h(\pi) = h} \prod_{i=1}^n A_{i,\pi(i)} \\
&= \sum_h \underbrace{\left(\sum_{\pi: h(\pi) = h} 1\right)}_{\equiv w_h} \left(\prod_{v \in \ve{h}} Y_v^{\tau_{hv}}\right)
\end{align*}

As in the proof of Lemma \ref{generalmoment} we write $P(A)$ as the sum of polynomials $g^{(1)},\dots,g^{(m)}$ with weights $w^{(1)},\dots,w^{(m)}$ and total powers  $q_1,\dots,q_m$ (in this case $\Eb{P(A)}=0$).

Fix some $1 \le i \le m$ and hyperedge $h'$ with $q(h')=q_i$. The next step is to bound coefficients of polynomials $g^{(i)}$,
\begin{align*}
w^{(i)}_{h'} &= \sum_{h \extends h'} w_h \left(\prod_{v \in \ve{h} \setminus \ve{h'}} \Eb{Y_v^{\tau_{hv}}}\right) \\
&= \sum_{\pi : h(\pi) \extends h'} \left(\prod_{v \in \ve{h(\pi)} \setminus \ve{h'}} \Eb{Y_v^{\tau_{hv}}}\right) \\
& \le \sum_{S: h(S)=h'} \sum_{\pi : \pi \supseteq S} \left(\prod_{v \in \ve{h(\pi)} \setminus \ve{h'}} \Eb{Y_v^{\tau_{hv}}}\right)
\end{align*}
where $S$ is a set matrix entries.
We can bound the number of such $S$ by $2^{O(q_i)}$ since for each vertex (variable) $v\in h'$ there are only two entries in the matrix $A$ that can be mapped to $v$ by the mapping $h(S)$. Fix an $S$ such that $h(S)=h'$.
Note that whenever $\tau_{hv}=1$ we have $\Eb{Y_v^{\tau_{hv}}}=0$, so we can restrict the sum to be over permutations $\pi$ with $\tau_{hv}=2$ for all $v \in \ve{h(\pi)} \setminus \ve{h'}$. For every fixed $S$, the number of such $\pi$ is at most $n^{(n-q_i)/2}$ since we need to choose the remaining $n-q_i$ entries and each choice fixes two positions in $\pi$. By moment boundedness we have $\left(\prod_{v \in \ve{\pi} \setminus \ve{h'}} \Eb{Y_v^{\tau_{hv}}}\right) \le 2^{(n-q_i)/2}$. We conclude that $w^{(i)}_{h'} \le 2^{O(n)} n^{(n-q_i)/2} $.

Fix some $g^{(i)}$ with total power $q_i\equiv q \le n$. We start from (\ref{inequality3}):
\begin{eqnarray}
\left|\E\left[g^{(i)}(Y)^k\right]\right| & \le & \sum_{\ell=\eta}^{k\eta/2}\frac{1}{\ell!} \sum_{G'\in {\cal S}_2(k,\ell)} \underbrace{\sum_{\pi\in M([\ell])}}_{\le (n^2)^{\ell}}
\underbrace{\left(\prod_{h\in {\cal H}(G')}w_{\pi(h)} \right)}_{\le (2^{O(n)} n^{(n-q)/2})^{k} } \underbrace{\left(\prod_{u\in {\cal V}(G')}\Lambda_{\pi(u)}(G') \right)}_{\le 2^{qk} \prod_{u} D_u!}\nonumber \\
& \le & \sum_{\ell=\eta}^{k\eta/2}\frac{1}{\ell!} \sum_{G'\in {\cal S}_2(k,\ell)} (n^2)^{\ell} (2^{O(n)} n^{(n-q)/2})^{k} \left(2^{qk} \prod_{u\in {\cal V}(G')}  D_u!\right) \nonumber \\
& \le & \sum_{\ell=\eta}^{k\eta/2} 2^{O(n)} R_6^{qk} \frac{1}{\ell!}\left(n^2\right)^\ell k^{qk} n^{k(n-q)/2}  \nonumber \\
& \le &  R_7^{nk} \left(\frac{n^{2\ell}}{\ell^{\ell}}\right) k^{qk} n^{k(n-q)/2} \label{eqn:sunshine}
\end{eqnarray}
where the third inequality bounded $2^{qk}|S_2(\ell)| \prod_{u} D_u!$ by $R_6^{qk}k^{qk}$ using Lemma \ref{MainCount}.   Recall that $\max_{x>0} \left(n k/x\right)^{x} = e^{n k/e}$. We continue using the facts   that we choose $k\le n$ and $\ell \le \eta k/2\le qk/2$,
\begin{eqnarray*}
\left|\E\left[g^{(i)}(Y)^k\right]\right| & \le &  R_7^{nk} \left(\frac{n k}{\ell}\right)^{\ell} \left(\frac{n}{k}\right)^{\ell} k^{qk} n^{k(n-q)/2}\\
& \le &  R_8^{nk}\left(\frac{n}{k}\right)^{\ell} k^{qk} n^{k(n-q)/2}\\
& \le &  R_8^{nk}   n^{qk/2} k^{qk/2} n^{k(n-q)/2}\\
& \le & R_8^{nk} n^{nk/2} k^{nk/2}.
\end{eqnarray*}
Applying Corollary \ref{cor:momentOfSum} yields
\begin{eqnarray*}
\E\left[\left|P(A)\right|^k\right]&\le& \E\left[\left(\sum_{i=1}^{m}\left|g^{(i)}(Y)\right|\right)^k\right] \\
&\le& m^k R_8^{nk} n^{nk/2} k^{nk/2} \\
& \le & R_9^{nk} n^{nk/2} k^{nk/2}
\end{eqnarray*}
since $m \le 2n^2 \le 10^n$ and where $R_9$ is an absolute constant.

This gives us a bound on $\left|\E\left[P(A)^k\right]\right|$ identical to that in the proof of Theorem \ref{perm}, so we finish the proof identically to the proof of Theorem \ref{perm}.
\end{proof}

%%%%%%%%%%%%%%%%%%%%%%%%%%%%%%%%%%%%%
\section{Examples of Moment Bounded Random Variables}\label{sec:exampleMom}

In this section we show that three classes of random variables are moment bounded and give examples from each class. The classes are bounded random variables,  log-concave continuous random variables, and  log-concave discrete random variables.

\subsection{Bounded random variables}

\begin{lemma}\label{lem:bmb}
Any random variable $Z$ with $|Z| \le L$ is moment bounded with parameter $L$.
\end{lemma}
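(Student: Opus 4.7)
The plan is essentially a one-line calculation. I would start from the hypothesis $|Z| \le L$ almost surely, which immediately gives the pointwise bound $|Z|^i = |Z| \cdot |Z|^{i-1} \le L \cdot |Z|^{i-1}$ for every integer $i \ge 1$. Taking expectations preserves the inequality by monotonicity, so $\Eb{|Z|^i} \le L \cdot \Eb{|Z|^{i-1}}$.

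To match the exact form of Definition \ref{var}, I would then simply note that $1 \le i$, so multiplying the right-hand side by $i$ only weakens the bound: $\Eb{|Z|^i} \le L \cdot \Eb{|Z|^{i-1}} \le i \cdot L \cdot \Eb{|Z|^{i-1}}$. This is precisely the moment-bounded condition with parameter $L$.

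There is no real obstacle here; the only subtlety worth flagging is that the extra factor of $i$ in Definition \ref{var} is slack for bounded random variables, which foreshadows the fact that bounded variables are typically much better concentrated than the generic moment-bounded bound would predict. No additional hypothesis on $Z$ (e.g.\ nonnegativity, mean zero) is needed since the definition only involves $|Z|$.
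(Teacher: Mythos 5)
Your proof is correct and follows exactly the same argument as the paper: the pointwise bound $|Z|^i \le L|Z|^{i-1}$, followed by taking expectations, followed by observing that the factor $i \ge 1$ only weakens the inequality. Nothing to add.
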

\begin{proof}
For any $i \ge 1$ we clearly have $|Z|^i \le L |Z|^{i-1}$ hence $\Eb{|Z|^i} \le L \Eb{|Z|^{i-1}} \le i L \Eb{|Z|^{i-1}}$.
\end{proof}

In particular Lemma \ref{lem:bmb} implies that 0/1 and -1/1 random variables are moment bounded with parameter 1.

\subsection{Log-concave continuous random variables}

We say that non-negative function $f$ is \emph{log-concave} if $f(\lambda x + (1-\lambda) y) \ge f(x)^\lambda f(y)^{1-\lambda}$ for any $0 \le \lambda \le 1$ and $x,y \in \R$ (see \cite{BV} Section 3.5). Equivalently $f$ is log concave if $\ln f(x)$ is concave on the set $\{x : f(x)>0\}$ where $\ln f(x)$ is defined and this set is a convex set (i.e.\ an interval).
A continuous random variable (or a continuous distribution) with density $f$ is \emph{log-concave} if $f$ is a log-concave function. See \cite{An,An1,BB, BV} for introductions to log-concavity.

\begin{lemma}\label{lem:lcmb} %log concave moment bounded
Any non-negative log-concave random variable $X$ with density $f$ is moment bounded with parameter $L=\Eb{X}$.
\end{lemma}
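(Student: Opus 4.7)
The plan is to transfer log-concavity from the density $f$ to the survival function $F^c(t) := \Pr[X > t]$, and then extract the desired moment recursion from a submultiplicativity estimate for $F^c$ combined with integration by parts.

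First I would establish that $F^c$ is log-concave on $[0,\infty)$. This is a direct consequence of Pr\'ekopa's marginal theorem: the function $(s,t)\mapsto \mathbf{1}[s\ge t]\cdot f(s)$ is log-concave on $\R^2$ (indicator of a convex half-plane times a log-concave function), and $F^c$ is its marginal in $t$. Since $F^c(0)=1$, the function $G := -\log F^c$ is convex on $[0,\infty)$ with $G(0)=0$, and any such convex function is super-additive: writing $u$ and $s$ as convex combinations of $0$ and $u+s$ yields $G(u)\le \tfrac{u}{u+s}G(u+s)$ and $G(s)\le \tfrac{s}{u+s}G(u+s)$, and adding gives $G(u+s)\ge G(u)+G(s)$. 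Exponentiating produces the key bound $F^c(u+s)\le F^c(u)\cdot F^c(s)$ for all $u,s\ge 0$.

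Integrating this submultiplicative estimate in $s$ over $[0,\infty)$ and using $\int_0^\infty F^c(s)\,ds = \Eb{X} = L$ gives the tail integral bound
$$\int_u^\infty F^c(y)\, dy \;=\; \int_0^\infty F^c(u+s)\, ds \;\le\; L\cdot F^c(u), \qquad u\ge 0.$$

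To conclude, I would note that a log-concave density on $[0,\infty)$ has at least exponentially decaying tails, so every moment is finite and integration by parts gives $\Eb{X^i} = i\int_0^\infty y^{i-1}F^c(y)\,dy$ for every $i\ge 1$. The case $i=1$ is the tautology $\Eb{X}=L$. For $i\ge 2$ I would write $y^{i-1} = (i-1)\int_0^y u^{i-2}\,du$ and swap the order of integration:
$$\Eb{X^i} \;=\; i(i-1)\int_0^\infty u^{i-2}\!\left(\int_u^\infty F^c(y)\, dy\right)\! du \;\le\; i(i-1)L \int_0^\infty u^{i-2} F^c(u)\, du \;=\; iL\cdot \Eb{X^{i-1}},$$
which is precisely the moment-boundedness condition of Definition \ref{var}. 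The only nontrivial input is the propagation of log-concavity from $f$ to $F^c$ via Pr\'ekopa's theorem; everything else is routine one-dimensional calculus.
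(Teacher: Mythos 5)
Your proof is correct, but it follows a genuinely different route from the paper's. Both arguments begin the same way: express moments via the survival function $\bar F(x)=\pr{X\ge x}$ using integration by parts, and invoke the standard fact that log-concavity of the density $f$ passes to $\bar F$ (you derive this via Pr\'ekopa's marginal theorem applied to $(s,t)\mapsto \mathbf{1}[s\ge t]f(s)$; the paper simply cites \cite{An,BB}). After that the two proofs diverge. The paper exploits that $\bar F(x)/f(x)$ is non-increasing (equivalently, the hazard rate is non-decreasing), writes $\Eb{|X|^i}=\int_0^\ell ix^{i-1}\,dx + \int_\ell^u \bigl(\bar F(x)/f(x)\bigr)\,ix^{i-1}f(x)\,dx$ where $[\ell,u]$ is the support, and then applies Chebyshev's integral inequality to the product of a non-increasing and a non-decreasing function against the probability density $f$; this leaves a boundary term $\ell^i$ which is absorbed separately via $\ell^i\le i\ell\,\Eb{|X|^{i-1}}$. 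You instead convert log-concavity of $\bar F$ plus $\bar F(0)=1$ into superadditivity of $-\log\bar F$, hence the submultiplicative tail estimate $\bar F(u+s)\le \bar F(u)\bar F(s)$, and integrate this once to get $\int_u^\infty \bar F \le L\,\bar F(u)$; a Tonelli swap then yields the recursion directly. Your route avoids the Chebyshev integral inequality and the boundary-term bookkeeping, at the cost of invoking Pr\'ekopa explicitly, and it makes the mechanism (``log-concave survival functions have no heavier-than-exponential tail growth relative to their current value'') more transparent. The only ingredient you should perhaps make explicit is that superadditivity and hence submultiplicativity are trivially verified off the support of $\bar F$, where one side is zero, but this is routine.
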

\begin{proof}
Let $\ell = \inf \{x \ge 0 : f(x)>0\}$ and $u=\sup \{x \ge 0 : f(x)>0\}$.  By log-concavity we have that $f(x)>0$ for all $\ell <x<u$.
Let $F(x)=\pr{X \le x}$ and $\bar F(x) = \pr{X \ge x}$. Note that $\bar F(x)=0$ for all $x\ge u$. For any $i \ge 1$ we write
\begin{align}
\Eb{X^i} & = \int_{x=0}^\infty x^i dF(x) \nonumber \\
 & = -\int_{x=0}^\infty x^i d\bar F(x) \nonumber \\
& = -x^i \bar F(x)\Big|_{x=0}^\infty + \int_{x=0}^\infty \bar F(x) d(x^{i}) \nonumber \\
& = 0 + \int_0^\infty \bar F(x) i x^{i-1} dx \nonumber \\
& = \int_0^\ell i x^{i-1} dx + \int_\ell^u \frac{\bar F(x)}{f(x)} i x^{i-1} f(x) dx \label{eqn:momentLC}
\end{align}
where the third equality is integration by parts. It is known (see for example implication B of Proposition 1 in \cite{An} or Theorem 2 in \cite{BB}) that log-concavity of density $f$ implies log-concavity of $\bar F(x)$. It follows that $d (\ln \bar F(x)) / dx = -f(x)/\bar F(x)$ is a non-increasing function of $x$ on $(\ell, u)$ and hence $\bar F(x)/f(x)$ is also non-increasing. It follows that $\frac{\bar F(x)}{f(x)} \cdot i x^{i-1}$ is a product of a non-increasing function and a non-decreasing function. We apply Chebyshev's integral inequality, yielding,
\begin{align*}
\Eb{X^i} & \le \int_0^\ell i x^{i-1} dx + \left[\int_{\ell}^u \frac{\bar F(x)}{f(x)} f(x)dx \right] \left[\int_{\ell}^u i x^{i-1} f(x)dx \right] \\
& = \int_0^\ell i x^{i-1} dx + \left[\int_{\ell}^u {\bar F(x)} dx \right] \cdot i\Eb{X^{i-1}} \\
& = \ell^i + \left[\int_{\ell}^u {\bar F(x)} dx \right] \cdot i\Eb{X^{i-1}} \\
& \le i \ell \Eb{X^{i-1}} + \left[\int_{\ell}^u {\bar F(x)} dx \right]\cdot i\Eb{X^{i-1}} \\
& =\left[\int_{0}^{+\infty} {\bar F(x)} dx \right]\cdot i\Eb{X^{i-1}} = \Eb{X} \cdot i\Eb{X^{i-1}}
\end{align*}
where we used the fact $\Eb{|X|}=\int_{0}^{+\infty} {\bar F(x)} dx$.
\end{proof}

\begin{lemma}\label{lem:lcmb2}
Any log-concave random variable $X$ with density $f$ is moment bounded with parameter $L=\frac{1}{\ln 2}\Eb{|X|} \approx 1.44 \Eb{|X|}$.
\end{lemma}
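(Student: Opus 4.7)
The strategy is to reduce to Lemma \ref{lem:lcmb} by splitting $X$ by sign. Let $p=\pr{X\ge 0}$, $q=1-p$, and set $Y=X\mid X\ge 0$, $Z=-X\mid X<0$; both are non-negative log-concave random variables, since restricting a log-concave density to a half-line preserves log-concavity. Lemma \ref{lem:lcmb} therefore gives $\Eb{Y^i}\le i\Eb{Y}\Eb{Y^{i-1}}$ and $\Eb{Z^i}\le i\Eb{Z}\Eb{Z^{i-1}}$. Combining these with the identities $\Eb{|X|^i}=p\Eb{Y^i}+q\Eb{Z^i}$ and $\Eb{|X|^{i-1}}=p\Eb{Y^{i-1}}+q\Eb{Z^{i-1}}$ yields
\[
\Eb{|X|^i}\le i\bigl(p\Eb{Y}\Eb{Y^{i-1}}+q\Eb{Z}\Eb{Z^{i-1}}\bigr)\le i\,\max(\Eb{Y},\Eb{Z})\,\Eb{|X|^{i-1}}.
\]

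It therefore suffices to establish the key inequality $\max(\Eb{Y},\Eb{Z})\le\Eb{|X|}/\ln 2$. By symmetry (replace $X$ with $-X$ if necessary) we may assume $\Eb{Y}\ge\Eb{Z}$, so the bound on $\Eb{Z}$ is implied by the one on $\Eb{Y}$. Substituting $\Eb{|X|}=p\Eb{Y}+q\Eb{Z}$ the desired bound rearranges to $(\ln 2-p)\Eb{Y}\le q\Eb{Z}$, which is automatic when $p\ge\ln 2$; the non-trivial regime is $p<\ln 2$.

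The main obstacle is verifying $(\ln 2-p)\Eb{Y}\le q\Eb{Z}$ from the global log-concavity of $X$. The natural route is extremal: writing $\bar F(t)=p\,e^{-\rho(t)}$ for $t\ge 0$ and $F(-s)=q\,e^{-\phi(s)}$ for $s\ge 0$ with convex $\rho,\phi$ and $\rho(0)=\phi(0)=0$, one argues that the infimum of $q\Eb{Z}-(\ln 2-p)\Eb{Y}$ over log-concave distributions with fixed $\Eb{|X|}$ is attained when both $\rho$ and $\phi$ are linear, i.e.\ when $X=-c+W$ is a shifted exponential with $W\sim\mathrm{Exp}(\lambda)$. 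Inside this one-parameter family a direct calculation gives
\[
q\Eb{Z}-(\ln 2-p)\Eb{Y}=\frac{a-1+2e^{-a}-\ln 2}{\lambda},\qquad a=\lambda c,
\]
and the numerator is minimized at $a=\ln 2$ (where its derivative $1-2e^{-a}$ vanishes) with minimum value $0$. This simultaneously pins down the constant $1/\ln 2$ and, combined with the extremality claim (which forces the convex functions $\rho,\phi$ to degenerate to their tangent lines at the origin), establishes the key inequality for all log-concave $X$, with equality on the shifted exponential $\lambda c=\ln 2$. Assembling this bound with the display in the first paragraph yields $\Eb{|X|^i}\le iL\Eb{|X|^{i-1}}$ with $L=\Eb{|X|}/\ln 2$ for every $i\ge 1$, completing the proof.
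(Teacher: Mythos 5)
Your overall strategy mirrors the paper's exactly: split $X$ by sign, apply Lemma \ref{lem:lcmb} to the two non-negative conditional pieces, reduce to bounding $\max\{\Eb{|X|\mid X\ge 0},\Eb{|X|\mid X<0\}\}$ by $\Eb{|X|}/\ln 2$, and finally identify a (reflected/shifted) exponential as the extremal distribution. Your algebra within the one-parameter shifted-exponential family is correct: the quantity $a-1+2e^{-a}-\ln 2$ is minimized at $a=\ln 2$ with value $0$.

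However, there is a genuine gap. The entire weight of the argument rests on the sentence beginning ``one argues that the infimum of $q\Eb{Z}-(\ln 2-p)\Eb{Y}$ over log-concave distributions with fixed $\Eb{|X|}$ is attained when both $\rho$ and $\phi$ are linear,'' and the parenthetical ``which forces the convex functions $\rho,\phi$ to degenerate to their tangent lines at the origin.'' This is asserted, not proved. It is not at all automatic that such an infimum is attained, nor that it is attained at an extreme point of the kind you describe, nor that the relevant extreme points are exactly the shifted exponentials; convex functionals over convex classes of measures can behave subtly, and a priori the infimum over arbitrary log-concave densities could be negative. This is precisely the content that the paper supplies via Lemma \ref{lem:logconcave}: a single-crossing comparison showing that, after normalizing so that $f(0)=\pr{X\le 0}$, the reference density $h(x)=e^{x-x_0}\mathbf{1}[x\le x_0]$ with $e^{-x_0}=\pr{X\le 0}$ crosses $f$ exactly once on each half-line, whence $\Eb{|X'|\mid X'\le 0}\ge\Eb{|X|\mid X\le 0}$ and $\Eb{|X'|\mid X'\ge 0}\le\Eb{|X|\mid X\ge 0}$. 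Only after this monotone comparison does the one-parameter minimization you carried out become a valid reduction. Without a proof of the single-crossing (or some equivalent majorization argument), your key inequality is unsupported. You should either adapt the paper's Lemma \ref{lem:logconcave} or supply your own rigorous justification for the claimed extremality.
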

\begin{proof}
If $X$ is non-negative or non-positive with probability 1 the Lemma follows from Lemma \ref{lem:lcmb}, so suppose not.
Write
\begin{align*}
\Eb{|X|^k} &= \pr{X \ge 0}\Eb{X^k | X \ge 0} + \pr{X < 0}\Eb{(-X)^k | X < 0} \\
&= \pr{X \ge 0}\Eb{X_+^k} + \pr{X < 0}\Eb{X_{-}^k}
\end{align*}
where $X_+$ (resp.\ $X_-$) is a non-negative random variable with density at $x$ proportional to $f(x)$ (resp.\ $f(-x)$) for $x \ge 0$ and zero for $x < 0$. Clearly $X_+$ and $X_-$ are log-concave. Lemma \ref{lem:lcmb} yields
\begin{align*}
\Eb{|X|^k} &\le \pr{X \ge 0}k \Eb{|X_+|} \Eb{|X_+|^{k-1}} + \pr{X < 0}k \Eb{|X_-|} \Eb{|X_-|^{k-1}} \\
&\le k \max\{\Eb{|X_+|}, \Eb{|X_-|}\} \left(\pr{X \ge 0}\Eb{|X|^{k-1} | X \ge 0} + \pr{X < 0}\Eb{|X|^{k-1} | X < 0}\right) \\
%& = k \max(\Eb{|X_+|}, \Eb{|X_-|}) \Eb{|X|^{k-1}} \\
& = k \max\{\Eb{|X|~|~X \ge 0}, \Eb{|X|~|~X < 0}\} \Eb{|X|^{k-1}} \\
& \le k \frac{1}{\ln 2}\Eb{|X|} \Eb{|X|^{k-1}}
\end{align*}
where we used Lemma \ref{lem:uncondition} in the last inequality to bound $\max \{\Eb{|X| ~|~ X \le 0}, \Eb{|X| ~|~ X \ge 0}\} \le \frac{1}{\ln 2} \Eb{|X|}$.
\end{proof}

The survey \cite{BB} lists many distributions with log-concave densities: normal, exponential, logistic, extreme value, chi-square, chi, Laplace, Weibull, Gamma, and Beta, where the last three are log-concave only for some parameter values. Lemma \ref{lem:lcmb2} implies that random variables with any of these distributions are moment bounded.

Any random variable trivially satisfies $\Eb{|X|^1} = 1 \Eb{|X|}\Eb{|X|^{1-1}}$ so for every random variable (that is non-zero with positive probability) Lemma \ref{lem:lcmb} gives the smallest possible moment boundedness parameter $L$ and Lemma \ref{lem:lcmb2} gives $L$ that is within a factor of $1/\ln 2$ of the best possible. An exponentially distributed random variable is tight for Lemma \ref{lem:lcmb} in an even stronger sense: $\Eb{|X|^k} = k \Eb{|X|}\Eb{|X|^{k-1}}$ for \emph{all} integers $k \ge 1$.

The following example shows that Lemma \ref{lem:lcmb2} is tight. Let $X$ have density
\[
f(x) = \piecewise{e^{(x-x_0)} & \tif x \le x_0 \\ 0 & \tif x > x_0}
\]
where $x_0 = \ln 2$. This density is clearly log-concave. Using integration by parts we derive
\begin{eqnarray*}
\Eb{|X|} &=& \int_{0}^{+\infty}xe^{-x-x_0}dx +\int_{0}^{\ln 2}xe^{x-x_0} dx \\
&=&-\frac{xe^{-x}}{2}\Big|_0^{+\infty}+\int_{0}^{+\infty}\frac{e^{-x}}{2}dx +\frac{xe^{x}}{2}\Big|_0^{\ln 2}-\int_{0}^{\ln 2}\frac{e^{x}}{2} dx\\
&=&0+1/2+\ln 2-1/2=\ln 2.
\end{eqnarray*}
 The $k$-th moment for large $k$ is dominated by the exponential left tail: $\Eb{|X|^k} = e^{-x_0} k! + O(1)$. Therefore $\lim_{k \to \infty} \frac{\Eb{|X|^k}}{\Eb{|X|^{k-1}}} = k = k \frac{1}{\ln 2} \Eb{|X|}$, hence $X$ is moment-bounded for no $L <\frac{1}{\ln 2} \Eb{|X|}$.

\subsubsection{Technical lemmas}

This section is devoted to proving the following Lemma.

\begin{lemma}\label{lem:uncondition}
For any random variable $X$ with log-concave density, $\pr{X \ge 0}>0$ and $\pr{X \le 0} > 0$ we have
\[
 \max \{\Eb{|X| ~|~ X \le 0}, \Eb{|X| ~|~ X \ge 0}\} \le \frac{1}{\ln 2} \Eb{|X|} \approx 1.44 \Eb{|X|}
.\]
\end{lemma}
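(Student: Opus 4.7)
The plan is to reformulate the desired inequality and then exploit the log-concavity of the survival function of $X$. Writing $A = \Eb{|X| \mid X \ge 0}$, $B = \Eb{|X| \mid X \le 0}$, $p = \pr{X \ge 0}$, and $q = \pr{X \le 0} = 1-p$, I would first reduce to the case $A \ge B$ by the reflection $X \mapsto -X$ (which swaps the roles of $A,B$ and of $p,q$). The claim then becomes $A \ln 2 \le pA + qB$, or equivalently $A(\ln 2 - p) \le qB$. The case $p \ge \ln 2$ is immediate since the left side is then non-positive, so the interesting regime is $p < \ln 2$.

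The key tool I would invoke is the standard fact (Proposition~1 of \cite{An}, or Theorem~2 of \cite{BB}) that if $f$ is log-concave on $\R$ then both the CDF and the survival function $\bar F(t) = \pr{X \ge t}$ are log-concave. Concretely, $\phi(t) := \bar F(t)$ is log-concave on $[0,\infty)$ with $\phi(0)=p$ and $(\ln \phi)'(0^+) = -f(0)/p$, and symmetrically $\psi(s) := \bar F(-s)$ is log-concave on $[0,\infty)$ with $\psi(0)=p$ and $(\ln\psi)'(0^+) = +f(0)/p$. Since both $p$ and $q$ are positive, $0$ lies in the interior of the support, and continuity of log-concave densities on the interior of the support gives $f(0)>0$, so $\alpha := f(0)/p$ is a finite positive number.

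Concavity of $\ln \phi$ at $0$ yields $\phi(t) \le p e^{-\alpha t}$, and integrating gives $pA = \int_0^\infty \phi(t)\,dt \le p/\alpha$, i.e.\ the upper bound $A \le p/f(0)$. Concavity of $\ln \psi$ at $0$ gives $\psi(s) \le p e^{\alpha s}$, which combined with the trivial bound $\psi(s)\le 1$ becomes $1 - \psi(s) \ge \max(0,\, 1 - pe^{\alpha s})$. Integrating from $0$ up to the crossover point $s^* := \ln(1/p)/\alpha$ where the exponential bound first saturates at $1$ yields the lower bound $qB = \int_0^\infty (1-\psi(s))\,ds \ge p(-\ln p - q)/f(0)$.

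Combining the two bounds, the whole inequality reduces (in the regime $p < \ln 2$) to the elementary numerical inequality $\ln 2 - p \le -\ln p - q$, equivalently $\ln(2p) \le 2p - 1$, which is just $\ln x \le x-1$ applied at $x=2p$. The one subtlety I anticipate is being careful about edge cases where the support of $f$ terminates on one or both sides: the exponential upper bounds on $\phi$ and $\psi$ continue to hold with the respective function set to $0$ outside its support, so all integrals remain finite and the argument goes through unchanged. Tightness of every step at the extremal log-concave density $f(x) = e^{-(x+\ln 2)}\mathbf{1}[x \ge -\ln 2]$ (where $p = q = 1/2$, $A = 1$, $B = 2\ln 2 - 1$, $f(0) = 1/2$) serves as a sanity check that no slack has been introduced and that the constant $1/\ln 2$ cannot be improved.
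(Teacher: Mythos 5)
Your proof is correct, and it follows a genuinely different route from the paper's. The paper normalizes $X$ so that $f(0) = \pr{X\le 0}$, introduces an explicit comparison variable $X'$ with shifted-exponential density $h(x)=e^{x-x_0}\mathbf{1}[x\le x_0]$ tuned so that $\pr{X'\le 0}=\pr{X\le 0}$ and $h(0)=f(0)$, proves a single-crossing lemma for log-concave densities (Lemma~\ref{lem:logconcave}), and deduces from it that $X'$ is extremal, with the bound then coming from a direct computation with $X'$. Your argument dispenses with the comparison variable and the single-crossing lemma entirely: you instead invoke log-concavity of the survival function $\bar F$, take the tangent line to $\ln\bar F$ at $0$ to obtain the two exponential bounds $\pr{X\ge t}\le p\,e^{-f(0)t/p}$ and $\pr{X\ge -s}\le p\,e^{f(0)s/p}$, integrate, and reduce everything to the scalar inequality $\ln(2p)\le 2p-1$. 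The moral content is the same --- log-concavity forces the tails to be dominated by the exponential touching at the origin --- but your version isolates it as two tangent-line bounds plus a one-line numerical fact, which is shorter and cleanly sidesteps the case analysis in the paper's Lemma~\ref{lem:logconcave}. One minor streamlining worth noting: the argument actually establishes $\Eb{|X|\mid X\ge 0}\le\frac{1}{\ln 2}\Eb{|X|}$ unconditionally (the case $p>\ln 2$ being trivial, and the case $p\le\ln 2$ being handled by the tangent bounds), so you never use the WLOG $A\ge B$; you can simply bound $A$ directly and obtain the bound on $B$ by the reflection $X\mapsto -X$, exactly as the paper itself does for its own proof.
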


The following Lemma about log-concave functions is intuitive but a bit technical to prove.
\begin{lemma}\label{lem:logconcave}
Suppose $f$ is a log-concave function, $x_0 >0$, $h(x)=\piecewise{e^{x-x_0} & \tif x \le x_0 \\ 0 & \tif x > x_0}$, $f(0)=h(0)$, and $\int_{-\infty}^0 f(x) \mathrm{d}x = \int_{-\infty}^0 h(x) \mathrm{d}x$.
 It follows that:
\begin{enumerate}
\item there exists $x_1<0$ such that $(x-x_1)(f(x)-h(x)) \ge 0$ for any $x \le 0$ and
\item $(x-x_0)(f(x)-h(x)) \ge 0$ for any $x \ge 0$.
\end{enumerate}
\end{lemma}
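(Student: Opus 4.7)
The plan is to reduce both parts to the fact that $g(x) := \log f(x) - \log h(x) = \log f(x) - (x - x_0)$ is concave on $I := \{x \le x_0 : f(x) > 0\}$ with $g(0) = 0$. Concavity follows because $\log f$ is concave on $\{f>0\}$ by hypothesis and $x - x_0$ is linear, and the hypothesis $f(0) = h(0)$ gives $g(0) = 0$. Observe that on $I$ the inequality $f(x) \ge h(x)$ is equivalent to $g(x) \ge 0$, while at points where $f(x) = 0$ and $h(x) > 0$ we trivially have $f < h$.

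The key structural observation I would invoke is that the superlevel set $\{g \ge 0\}$ of a concave function is convex, hence an interval. Combined with $g(0)=0$ and the continuity of $g$ on the interior of $I$ (log-concave functions are continuous on the interior of their support), the set $\{x \in I \cap (-\infty,0] : g(x) \ge 0\}$ is an interval containing $0$, and analogously for the positive side.

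For part 1, I would first dispatch the degenerate cases using the integral constraint. If $g \ge 0$ (resp.\ $g \le 0$) on all of $I \cap (-\infty,0]$, then $f \ge h$ (resp.\ $f \le h$) on $I$ and $f = 0 \le h$ off $I$; the equality $\int_{-\infty}^0 (f-h)\,dx = 0$ then forces $f = h$ almost everywhere on $(-\infty,0]$, so any $x_1 < 0$ works. Otherwise $g$ takes both signs on $I \cap (-\infty,0)$, and by the structural observation there is a unique $x_1 \in (-\infty,0)$ with $g(x_1)=0$, $g \le 0$ on $I \cap (-\infty,x_1]$, and $g \ge 0$ on $[x_1,0]$. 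The inequality $(x-x_1)(f(x)-h(x)) \ge 0$ then holds on $I$, and off $I$ it holds because $f = 0 \le h$ while $x - x_1$ has the correct sign (choosing $x_1$ inside the support if necessary).

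For part 2, the case $x \ge x_0$ is immediate since $h(x)=0 \le f(x)$. For $0 \le x \le x_0$, points outside $\operatorname{supp}(f)$ satisfy $f(x)=0 \le h(x)$ trivially; for points inside, convexity of $\operatorname{supp}(f)$ ensures $g$ is defined and concave on $[x_1,x]$. Suppose for contradiction $g(x^*) > 0$ for some $x^* \in (0, x_0]$ with $f(x^*) > 0$; then applying the chord inequality of the concave function $g$ to the triple $x_1 < 0 < x^*$ and using $g(x_1) \le 0$ gives
\[
g(0) \;\ge\; \frac{x^*}{x^*-x_1}\, g(x_1) + \frac{-x_1}{x^*-x_1}\, g(x^*) \;\ge\; \frac{-x_1}{x^*-x_1}\, g(x^*) \;>\; 0,
\]
contradicting $g(0)=0$. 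Hence $f \le h$ on $[0,x_0]$, which is the required $(x-x_0)(f(x)-h(x)) \ge 0$ since $x - x_0 \le 0$. The main obstacle will be the careful bookkeeping of boundary cases of $\operatorname{supp}(f)$ and the degenerate equality case; both can be handled using continuity of $f$ on the interior of its support together with the trivial bound $f = 0 \le h$ off the support.
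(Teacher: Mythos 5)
Your approach is essentially the same as the paper's: both reduce to concavity of $g := \log f - \log h$ on the support of $f$ intersected with $(-\infty, x_0]$, use $g(0)=0$, and then exploit the interval structure of the level sets. However, there are two genuine slips.

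First, the handling of the "degenerate" case of part~1 is wrong. You assert that if $g \ge 0$ on all of $I \cap (-\infty,0]$, then the integral constraint forces $f = h$ a.e.\ on $(-\infty,0]$ and so any $x_1 < 0$ works. This does not follow. Set $\ell = \inf\{x : f(x) > 0\}$. If $\ell > -\infty$, then $f = 0 < h$ on $(-\infty, \ell)$, which has positive measure, so $\int_{-\infty}^0(f-h)\,dx = 0$ together with $f \ge h$ on $I$ only tells you that $\int_{\ell}^0(f-h)\,dx = \int_{-\infty}^\ell h\,dx > 0$; it does \emph{not} give $f=h$ a.e. Indeed this is not an exceptional scenario: any log-concave $f$ supported on a finite interval produces it. In that case "any $x_1 < 0$" fails (take $x_1 < \ell$ and test at $x\in(x_1,\ell)$, where $f=0<h$ but $x-x_1>0$). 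The correct choice is $x_1 = \ell$ (or, as the paper does, $x_1 = \inf S^+$ where $S^+=\{x\le0: f(x)>h(x)\}$). The $g\le 0$ subcase is fine, since there $f\le h$ everywhere on $(-\infty,0]$ and the integral constraint really does force $f=h$ a.e., hence everywhere by continuity.

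Second, in part~2 the justification of the middle inequality is backwards. You write that "using $g(x_1) \le 0$" gives
\[
\frac{x^*}{x^*-x_1}\, g(x_1) + \frac{-x_1}{x^*-x_1}\, g(x^*) \;\ge\; \frac{-x_1}{x^*-x_1}\, g(x^*),
\]
but dropping the first term preserves $\ge$ only when that term is nonnegative, i.e.\ (since $x^*>0$ and $x^*-x_1>0$) when $g(x_1) \ge 0$, not $\le 0$. The chain happens to be valid because in your non-degenerate branch $g(x_1)=0$, but the cited reason is wrong, and the issue becomes live once you fix part~1 and may no longer have $g(x_1)=0$ exactly. The paper avoids this by explicitly choosing a witness point $x_2 = x_1/2$ for which part~1 guarantees $f(x_2)\ge h(x_2)$, i.e.\ $g(x_2)\ge 0$, and then arguing from concavity that $g(x)\le 0$ for $x>0$.

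Both slips are repairable, but as written the proof of part~1 misses a genuine case and part~2 cites the wrong sign condition.
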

\begin{proof}
Let $S^+ = \{x \le 0 : f(x) > h(x) \}$ and $S^- = \{x \le 0 : f(x) < h(x) \}$. We have
\[
0 = \int_{-\infty}^0f(x)dx - \int_{-\infty}^0h(x)dx = \int_{S^+} (f(x)-h(x))\mathrm{d}x - \int_{S^-} (h(x)-f(x))\mathrm{d}x
\]
and an integral of a positive function is positive iff it is over a set of positive measure, hence either both $S^+$ and $S^-$ have Lebesgue measure zero or neither do.
Below we will use the following simple fact about concave functions. If $g(x)$ is concave, $g(z)=0$, $g(z')>0$ and $z'<z$ then $g(z'')>0$ for all $z''\in (z,z)$.

The log-concavity of $f$ implies that $\ln f(x) - \ln h(x)$ is concave on $(-\infty, x_0]$. This and the fact that $f(0)=h(0)$ imply  the following key properties: if $x \in S^+$ then $S^+ \supseteq [x,0)$ and similarly if $y \in S^-$ then $S^- \supseteq (-\infty, y]$. Among other things these properties imply that if $S^+$ (resp.\ $S^-$) is non-empty it contains an interval and hence has positive measure.
If both $S^+$ and $S^-$ have measure zero then $f(x)=h(x)$ for all $x \le 0$ and any $x_1 < 0$ will satisfy the first part of the lemma.
If both $S^+$ and $S^-$ have positive measure then $x_1=\inf S^+$ will satisfy the first part of the lemma.

The first part of the lemma implies that $x_2=x_1/2<0$ satisfies $f(x_2) \ge h(x_2)$.
For any $0 < x < x_0$ the facts that $\ln f(x) - \ln h(x)$ is concave on $(-\infty, x_0]$, $f(x_2) \ge h(x_2)$ and $f(0)=h(0)$ imply that $f(x) \le h(x)$. Clearly $f(x) \ge h(x)=0$ for $x > x_0$, so the second part of the Lemma follows.
\end{proof}

Now we prove Lemma \ref{lem:uncondition}.
\begin{proof}
Let random variable $X$ be given with density $f$. We prove the upper-bound on $\Eb{|X| ~|~ X \le 0}$ only; the upper-bound on $\Eb{|X| ~|~ X \ge 0}$ follows from this bound because $-X$ is log-concave. The Lemma is invariant with respect to scaling $X$ so we can and do assume without loss of generality that $f(0)=\pr{X \le 0}$.

Let $X'$ be a random variable with density $h(x)=\piecewise{e^{x-x_0} & \tif x \le x_0 \\ 0 & \tif x > x_0}$ where $x_0$ is the solution to $e^{-x_0}=\pr{X \le 0}$. One can readily verify that $\pr{X' \le 0} = e^{-x_0} = \pr{X \le 0}$. Therefore,  $\pr{X' \ge 0} = \pr{X \ge 0}$, and $h(0)=f(0)$.

Using the first part of Lemma \ref{lem:logconcave} we have
\begin{align*}
0 & \le \int_{-\infty}^0  (x-x_1)(f(x)-h(x))dx \\
&= \int_{-\infty}^0 x (f(x)-h(x)) dx - x_1(\pr{X \le 0} - \pr{X' \le 0}) \\
& = \pr{X \le 0}(-\Eb{|X|~|~X \le 0} + \Eb{|X'|~|~X' \le 0}) - 0
\end{align*}
i.e.\ $\Eb{|X'|~|~X' \le 0} \ge \Eb{|X|~|~X \le 0}$.

By the second part of Lemma \ref{lem:logconcave} we have
\begin{align*}
0 &\le \int_{0}^{+\infty}  (x-x_0)(f(x)-h(x))dx \\
&= \int_{0}^{+\infty} x (f(x)-h(x)) dx - x_0(\pr{X \ge 0} - \pr{X' \ge 0}) \\
&= \pr{X \ge 0}(\Eb{|X|~|~X \ge 0}-\Eb{|X'|~|~X' \ge 0} ) - 0
\end{align*}
i.e.\ $\Eb{|X'|~|~X' \ge 0} \le \Eb{|X|~|~X \ge 0}$.

We conclude that
\begin{align}
\frac{\Eb{|X|}}{\Eb{|X|~|~ X \le 0}} &= \frac{\pr{X \le 0}\Eb{|X| ~|~ X \le 0}  + \pr{X \ge 0}\Eb{|X| ~|~ X \ge 0}}{\Eb{|X|~|~ X \le 0}} \nonumber \\
&= \pr{X \le 0} + \pr{X \ge 0}\cdot \frac{\Eb{|X|~|~ X \ge 0}}{\Eb{|X|~|~ X \le 0}} \nonumber \\
&\ge \pr{X' \le 0} + \pr{X' \ge 0}\cdot \frac{\Eb{|X'|~|~ X' \ge 0}}{\Eb{|X'|~|~ X' \le 0}} \nonumber \\
& = \pr{X' \le 0} + \pr{X' \le 0}\frac{\pr{X' \ge 0}\Eb{|X'|~|~ X' \ge 0}}{ \pr{X' \le 0}\Eb{|X'|~|~ X' \le 0}} \nonumber \\ %\label{eq:almost}
& = e^{-x_0} + e^{-x_0}\frac{x_0 + e^{-x_0} - 1}{e^{-x_0}} \nonumber \\
& = x_0 + 2e^{-x_0} - 1 \equiv r(x_0)
\end{align}
where we used integration by parts to compute
\begin{eqnarray*}
\pr{X' \le 0}\Eb{|X'| ~|~ X' \le 0} &=& \int_{-\infty}^{0} (-x) e^{x-x_0} dx \\
&=& (-x)e^{x-x_0}|^0_{-\infty} - \int_{-\infty}^{0} (-1) e^{x-x_0} dx = 0 + e^{- x_0} \\
\pr{X' \ge 0}\Eb{|X'| ~|~ X' \ge 0} &=& \int_{0}^{x_0} x e^{x-x_0} dx \\
&=& x e^{x-x_0}|^{x_0}_{0} - \int_{0}^{x_0} e^{x-x_0} dx= x_0 + e^{-x_0} - 1.
\end{eqnarray*}
Finally we note that $r(x_0)$ is minimized on $0 \le x_0 < \infty$ when $0=\frac{dr}{dx_0}=1 - 2 e^{-x_0}$. We conclude that
\[
\frac{\Eb{|X|}}{\Eb{|X|~|~ X \le 0}} \ge r(x_0) \ge r(\ln 2) = \ln 2
\]
which implies the Lemma.
\end{proof}

%%%
\subsection{Log-concave discrete random variables}

A distribution over the integers $\dots, p_{-2},p_{-1},p_0,p_1,p_2,\dots$ is said to be \emph{log-concave} \cite{An,JK} if $p_{i+1}^2 \ge p_i p_{i+2}$ for all $i$.
An integer-valued random variable $X$ is log-concave if its distribution $p_i = \pr{X=i}$ is.

\begin{lemma}\label{lem:dlcmb}
Any non-negative integer-valued log-concave random variable $X$ is moment bounded with parameter $L=1+\Eb{|X|}$.
\end{lemma}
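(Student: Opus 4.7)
The plan is to adapt the continuous proof of Lemma~\ref{lem:lcmb} to the discrete setting, replacing integration by parts with Abel summation and Chebyshev's integral inequality with Chebyshev's sum inequality. Write $p_k = \Pr[X=k]$, $q_k = \Pr[X \ge k]$, and let $a \ge 0$ denote the smallest point in the support of $X$.

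The first ingredient is the discrete analogue of the fact (used in the continuous case) that the survival function inherits log-concavity from the density. This amounts to showing $q_k(p_{k-1}-p_k) \le p_{k-1}p_k$ when $p_{k-1}>p_k$, which follows from the geometric-tail estimate $p_{k+j} \le p_k (p_k/p_{k-1})^j$ (a consequence of $r_j = p_{j+1}/p_j$ being non-increasing) by summing the resulting geometric series. An immediate consequence is that the discrete hazard rate $p_k/q_k$ is non-decreasing on the support, i.e., $b_k = q_k/p_k$ is non-increasing there; this is the direct counterpart of the non-increase of $\bar F(x)/f(x)$ used in the proof of Lemma~\ref{lem:lcmb}.

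The main computation applies Abel summation and the mean-value bound $k^i-(k-1)^i \le i k^{i-1}$ to get $\E[X^i] = \sum_{k \ge 1}(k^i-(k-1)^i)\,q_k \le i\sum_{k \ge 1}k^{i-1}q_k$. I split the inner sum into $\sum_{k=1}^{a-1}k^{i-1}$ (using $q_k = 1$ for $k \le a$) plus the tail $\sum_{k \ge a}k^{i-1}q_k$. To the tail I apply Chebyshev's sum inequality with non-decreasing $a_k = k^{i-1}$, non-increasing $b_k = q_k/p_k$, and weights $c_k = p_k$ summing to $1$ on $\{a,a+1,\dots\}$, which yields
\[
\sum_{k \ge a}k^{i-1}q_k \;\le\; \Bigl(\sum_{k \ge a}k^{i-1}p_k\Bigr)\Bigl(\sum_{k \ge a}q_k\Bigr) \;=\; \E[X^{i-1}]\,(1+\E[X]-a),
\]
where I used $\sum_{k \ge a}q_k = q_a + \sum_{k \ge a+1}q_k = 1+(\E[X]-a)$.

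For the head piece (nonempty only when $a \ge 2$) the crude bound $\sum_{k=1}^{a-1}k^{i-1} \le a^i$ combines with $\E[X^{i-1}] \ge a^{i-1}$ (from $X \ge a$) to give $\sum_{k=1}^{a-1}k^{i-1} \le a\,\E[X^{i-1}]$. Adding the two contributions and plugging back into the Abel bound yields $\E[X^i] \le i(1+\E[X])\,\E[X^{i-1}] = iL\,\E[X^{i-1}]$, as required. The main obstacle is the general-support case $a \ge 1$: Chebyshev only produces the factor $(1+\E[X]-a)$ on the tail, and one must verify that the elementary head-piece estimate contributes exactly the missing $a\,\E[X^{i-1}]$ needed to recover the clean parameter $L=1+\E[X]$.
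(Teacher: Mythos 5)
Your proof follows essentially the same route as the paper's: Abel summation to rewrite $\E[X^i]$ as a sum of $q_k(k^i-(k-1)^i)$, the elementary bound $k^i-(k-1)^i\le ik^{i-1}$, a split at the bottom of the support, Chebyshev's sum inequality on the tail with weights $p_k$ and the non-increasing hazard ratio $q_k/p_k$, and the crude head estimate absorbed using $\E[X^{i-1}]\ge a^{i-1}$. The only difference is that you supply a direct geometric-series proof that $q_k/p_k$ is non-increasing, whereas the paper simply cites this fact (Proposition 10 of An); this makes your version more self-contained but does not change the argument.
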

\begin{proof}
The proof parallels the proof of Lemma \ref{lem:lcmb}. Let $r_i=\pr{X\ge i}=\sum_{j=i}^\infty p_j$, $\ell=\min \{i:p_i>0\}$ and $u=\max \{i:p_i > 0\}$.
For any $k \ge 1$ we have
\begin{align}
\Eb{|X|^k} &= \sum_{x=0}^\infty p_x x^k \nonumber \\
&= \sum_{x=1}^\infty (r_x-r_{x+1}) x^k \nonumber \\
&= \sum_{x=1}^\infty r_x (x^k - (x-1)^k) \nonumber \\
& \le \sum_{x=1}^\infty r_x k x^{k-1} = \sum_{x=0}^\infty r_x k x^{k-1} \nonumber\\
& = \sum_{x=0}^{\ell-1} k x^{k-1} + \sum_{x=\ell}^u \frac{r_x}{p_x} k x^{k-1} p_x \nonumber\\
& \le \sum_{x=0}^{\ell-1}k x^{k-1} + \left( \sum_{x=\ell}^u \frac{r_x}{p_x} p_x\right) \left( \sum_{x=\ell}^u k x^{k-1} p_x\right) \nonumber\\
& \le \max\{0,\ell-1\} \Eb{k|X|^{k-1}} + \left(\Eb{|X|}+1-\ell\right) \Eb{k |X|^{k-1}} \nonumber\\ %Note: cannot improve first \ell to \ell-1 because of \ell=0 case.
& = \left(1+\Eb{|X|}\right) \Eb{k |X|^{k-1}} \nonumber
\end{align}
where the second inequality uses the fact that $\frac{r_x}{p_x}$ is a non-increasing sequence (Proposition 10 in \cite{An}) and Chebyshev's sum inequality.
\end{proof}

\begin{lemma}\label{lem:dlcmb2}
Any log-concave integer-valued random variable $X$ is moment bounded with parameter $L=\max(\Eb{|X| ~|~ X \ge 0}, \Eb{|X| ~|~ X < 0})$.
\end{lemma}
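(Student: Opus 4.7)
The plan is to imitate the proof of the continuous analogue Lemma \ref{lem:lcmb2}: split $X$ according to the sign of its value and apply the nonnegative case (Lemma \ref{lem:dlcmb}) to each piece. The observation that will let the ``$+1$'' of Lemma \ref{lem:dlcmb} be avoided here is that both pieces can be arranged to have supports contained in $\{1,2,\ldots\}$, so the refined bound hidden inside the proof of Lemma \ref{lem:dlcmb} (the case $\ell\ge 1$) applies and the extra additive $1$ disappears.

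Concretely, I would introduce the two auxiliary integer random variables $X_+ = (X \mid X>0)$ and $X_- = (-X \mid X<0)$. Both are nonnegative and integer-valued, and their distributions arise from $\{p_i\}$ by restriction to an interval followed by renormalization. Log-concavity is preserved: the defining inequality $p_{i+1}^2 \ge p_i p_{i+2}$ scales homogeneously, and at the boundary of the restriction it is trivial since the excluded terms are $0$. Crucially, both $X_+$ and $X_-$ have minimum support value at least $1$. Revisiting the proof of Lemma \ref{lem:dlcmb}, the $+1$ appears only in the tail sum $\sum_{x=\ell}^{u} r_x = \Eb{|X|} + 1 - \ell$, which drops to $\Eb{|X|}$ when $\ell \ge 1$; applying this refined form to $X_+$ and $X_-$ gives $\Eb{X_\pm^k} \le k\,\Eb{X_\pm}\,\Eb{X_\pm^{k-1}}$.

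To recombine, I would use the identity
\begin{equation*}
\Eb{|X|^k} = \pr{X>0}\,\Eb{X_+^k} + \pr{X<0}\,\Eb{X_-^k},
\end{equation*}
which is valid for every $k\ge 1$ because $|0|^k = 0$. Substituting the two moment bounds, pulling out $\max(\Eb{X_+},\Eb{X_-})$, and then re-collapsing the two conditional $(k-1)$-th moments back into $\Eb{|X|^{k-1}}$ (again using $|0|^{k-1}=0$ for $k\ge 2$) produces
\[
\Eb{|X|^k} \le k\,\max(\Eb{X_+},\Eb{X_-})\,\Eb{|X|^{k-1}}.
\]
The $k=1$ case reduces to the straightforward averaging bound $\Eb{|X|}\le \max(\Eb{X_+},\Eb{X_-})$.

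The main obstacle is the bookkeeping around the atom at $0$. A naive split along $X \ge 0$ versus $X<0$ places a mass at $0$ in the support of the positive piece, which activates the weaker $\ell=0$ bound of Lemma \ref{lem:dlcmb} and produces a spurious $+1$ in $L$. Using the strict inequality $X>0$ for the split, together with the fact that the $X=0$ outcome is invisible to $\Eb{|X|^k}$ whenever $k\ge 1$, is exactly the device that lets the refined bound kick in and yields the claimed $L$.
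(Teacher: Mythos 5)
Your proof is technically sound, and the key observation — conditioning on the strict event $X>0$ so that the refined $\ell\ge 1$ case of Lemma~\ref{lem:dlcmb} (no ``$+1$'') applies — is exactly the right device. The decomposition, the log-concavity preservation, and the recombination using $|0|^k=0$ for $k\ge 1$ are all correct.

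However, there is a real discrepancy you did not flag: what you prove is moment boundedness with parameter $L'=\max(\Eb{X\mid X>0},\,\Eb{-X\mid X<0})$, whereas the lemma as written claims $L=\max(\Eb{|X|\mid X\ge 0},\,\Eb{|X|\mid X<0})$. When $\pr{X=0}>0$ these differ, with $L'>L$, so your argument does not in fact ``yield the claimed $L$''. Moreover the lemma as written is false. Take $p_{-1}=0.01$ and $p_i=0.99\cdot(1-\rho)\rho^i$ for $i\ge 0$ with $\rho=0.4$; this is log-concave (the non-negative part is exactly geometric). Then $\Eb{|X|}=0.01+0.99\cdot\tfrac{\rho}{1-\rho}=0.67$, $\Eb{|X|^2}=0.01+0.99\cdot\tfrac{\rho(1+\rho)}{(1-\rho)^2}\approx 1.55$, while $\Eb{|X|\mid X\ge 0}=\tfrac{\rho}{1-\rho}\approx 0.667$ and $\Eb{|X|\mid X<0}=1$, so the claimed $L=1$ and $2L\Eb{|X|}\approx 1.34<1.55=\Eb{|X|^2}$. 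With your $L'=\max(\tfrac{1}{1-\rho},1)\approx 1.67$ the inequality does hold. So the lemma's ``$X\ge 0$'' should be ``$X>0$'', and your proof establishes that corrected statement. The paper itself only says the proof is ``almost identical to'' that of Lemma~\ref{lem:lcmb2}; a literal transcription of that proof splits at $X\ge 0$, inherits the spurious $+1$ from the $\ell=0$ case, and does not give the stated $L$ either. Your refinement is the right fix, but you should state explicitly that it changes the conditioning event in the first argument of the $\max$, and hence that the lemma's statement needs that small correction.
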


We omit the proof of Lemma \ref{lem:dlcmb2}, which is almost identical to the proof of Lemma \ref{lem:lcmb2}.

Examples of log-concave integer-valued distributions include Poisson, binomial, negative binomial and hypergeometric  \cite{An,JK}. Random variables with these distributions are moment bounded by Lemma \ref{lem:dlcmb}.

The parameter $1+\Eb{|X|}$ in Lemma \ref{lem:dlcmb} cannot be improved to match the $\Eb{|X|}$ in Lemma \ref{lem:lcmb}. Indeed a Poisson distributed random variable with mean $\mu$ has $\Eb{X^2} = \mu^2 + \mu$, which exceeds the desired bound of $2\Eb{X}\Eb{X^{2-1}}=2\mu^2$ when $\mu < 1$.

%Exponentially distributed random variables are tight not only for the definition of moment boundedness but also for our bounds. Indeed consider the simple polynomial %$f(X)=X$ of a single moment bounded random variable $X$, which has $q=1$, $\mu_0=\Eb{|X|} \le L$ and $\mu_1=1$. Theorem \ref{main1special} implies that
%\[
%Pr\left[|X-\E[X]|\ge \lambda\right]\le e^2\cdot \max\left\{ e^{-\frac{\lambda^2}{L\cdot R\cdot L}},e^{-\frac{\lambda}{R\cdot L}}\right\},
%\]
%where the second argument of the maximum is dominant in the interesting regime $\lambda \ge L$.

%%%%%%%%%%%%%%%%%
\section{Examples Showing Tightness of the Bounds}\label{tight}

This section deals exclusively with multilinear polynomials with non-negative coefficients over independent 0/1 random variables. We use notation specialized to this case: for a polynomial $f(x)$ and 0/1 random variables $Y_1,\dots,Y_n$ we have
\[
\mu_r(f,Y) = \max_{A\subseteq {\cal V}: |A|=r}\left\{ \sum_{h\in {\cal  H} | A\subseteq h } w_h\prod_{i\in h\setminus A}\Eb{Y_i}\right\}
.\]
We continue to omit the $Y$ from $\mu_r(f,Y)$ when it is clear from context.

\begin{lemma}\label{lem:muProd}
We are given a power $q_1$ polynomial $f_1(x)$ with corresponding hypergraph $H_1$, weights $w_1$, vertices ${\cal V}(H_1)=[n]$ and a power $q_2$ polynomial $f_2(x)$ with corresponding hypergraph $H_2$, weights $w_2$, vertices  ${\cal V}(H_2)=\{n+1,\dots,m\}$. We are also given $m$ independent 0/1 random variables $X_1,\dots,X_m$.
Then the product polynomial $fg=(H, w)$ defined by $(fg)(x_1,\dots,x_m)=f(x_1,\dots,x_n)g(x_{n+1},\dots,x_m)$ satisfies
\[
\mu_i(fg,X)) = \max_{0 \le i_1 \le q_1 : 0 \le i - i_1 \le q_2} \mu_{i_1}(f,X_1,\dots,X_n) \mu_{i - i_1}(g,X_{n+1},\dots,X_m)
.\]
\end{lemma}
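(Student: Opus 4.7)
The plan is to verify the claim by a direct tensor-product computation from the definition of $\mu_i$. Two structural observations do all the work. First, because the variable sets $[n]$ and $\{n+1,\ldots,m\}$ are disjoint, every hyperedge of the product hypergraph $H$ has the form $h=h_1\cup h_2$ with $h_1\in {\cal H}(H_1)$ and $h_2\in {\cal H}(H_2)$, and its weight is exactly $w_h=w_{1,h_1}\,w_{2,h_2}$. Second, every $A\subseteq [m]$ decomposes uniquely as $A=A_1\cup A_2$ with $A_1=A\cap[n]$ and $A_2=A\setminus[n]$, and $h_1\cup h_2\supseteq A$ iff $h_1\supseteq A_1$ and $h_2\supseteq A_2$.

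The first step is to use these two observations together with independence of the $X_v$ to factor the inner sum defining $\mu_i$ for any fixed $A$ of size $i$:
\[
\sum_{h\supseteq A} w_h \prod_{v\in h\setminus A}\Eb{X_v}
\;=\;
\Bigl(\sum_{h_1\supseteq A_1} w_{1,h_1} \prod_{v\in h_1\setminus A_1}\Eb{X_v}\Bigr)\Bigl(\sum_{h_2\supseteq A_2} w_{2,h_2} \prod_{v\in h_2\setminus A_2}\Eb{X_v}\Bigr).
\]
This uses distributivity of sum over product and the fact that the product over $h\setminus A=(h_1\setminus A_1)\sqcup(h_2\setminus A_2)$ splits across the two disjoint variable sets.

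The second step is to max over $A$ with $|A|=i$. Parametrizing $A$ by the split $i=i_1+(i-i_1)$ with $|A_1|=i_1$ and $|A_2|=i-i_1$, and observing that the two factors above depend on $A_1$ and $A_2$ independently, the joint maximum equals the product of the two individual maxima, i.e.\ $\mu_{i_1}(f,X_1,\ldots,X_n)\cdot \mu_{i-i_1}(g,X_{n+1},\ldots,X_m)$. Taking the outer max over $i_1$ recovers the claimed identity.

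Finally, the range restriction $0\le i_1\le q_1$ and $0\le i-i_1\le q_2$ is automatic: if $i_1>q_1$ then no hyperedge of $H_1$ can contain $A_1$ and the corresponding $\mu_{i_1}(f)$ is an empty sum equal to $0$, so such splits cannot attain the maximum (and analogously for $i-i_1>q_2$). I do not anticipate any real obstacle here; the argument is essentially a distributive-law/tensor-product manipulation, and the only thing to be careful about is the bookkeeping of the boundary cases for the index $i_1$.
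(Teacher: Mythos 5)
Your argument is correct and is essentially the same as the paper's proof: both factor the inner sum defining $\mu_i$ across the disjoint variable sets, then observe that the maximum over $A$ decomposes as a maximum over the split $i_1+i_2=i$ together with independent maxima over $A_1$ and $A_2$. The extra sentence you include addressing the boundary cases $i_1>q_1$ and $i-i_1>q_2$ is a small clarification the paper leaves implicit.
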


\begin{proof}
The Lemma follows easily from the definition of $\mu_i$ and the fact that restriction to hyperedges containing a fixed set of vertices preserves the product structure. Indeed let ${\cal H}_1={\cal H}(H_1)$, ${\cal H}_2={\cal H}(H_2)$ and ${\cal H}={\cal H}(H)$. Then
\begin{align*}
\mu_i(fg)
& = \max_{A\subseteq {\cal V}: |A|=r}\left\{ \sum_{h\in {\cal  H} | A\subseteq h } w_h\prod_{v\in h\setminus A}\Eb{X_v}\right\} \\
& = \max_{\substack{0 \le i_1 \le q_1, 0 \le i_2 \le q_2 :\\ i_1 + i_2 = i}} ~
		\max_{\substack{A_1\subseteq {\cal V}_1: \\ |A_1|=i_1}} ~ \max_{\substack{A_2\subseteq {\cal V}_2: \\ |A_2|=i_2}}
		\left\{ \sum_{\substack{h_1\in {\cal  H}_1 : \\ A_1\subseteq h_1 }} ~ \sum_{\substack{h_2\in {\cal  H}_2 : \\ A_2\subseteq h_2 }} w_{h_1} w_{h_2}
		\prod_{v\in h_1\setminus A_1} \Eb{X_v} \prod_{v\in h_2\setminus A_2} \Eb{X_v}
		\right\} \\
& = \max_{\substack{0 \le i_1 \le q_1, 0 \le i_2 \le q_2 :\\ i_1 + i_2 = i}}
		\left(\max_{\substack{A_1\subseteq {\cal V}_1: \\ |A_1|=i_1}} \sum_{\substack{h_1\in {\cal  H}_1 : \\ A_1\subseteq h_1 }} \prod_{v\in h_1\setminus A_1} \Eb{X_v}\right)
		\left(\max_{\substack{A_2\subseteq {\cal V}_2: \\ |A_2|=i_2}} \sum_{\substack{h_2\in {\cal  H}_2 : \\ A_2\subseteq h_2 }} \prod_{v\in h_2\setminus A_2} \Eb{X_v}\right) \\
& = \max_{\substack{0 \le i_1 \le q_1, 0 \le i_2 \le q_2 :\\ i_1 + i_2 = i}}
		\mu_{i_1}(f) \mu_{i_2}(g)
.\end{align*}
\end{proof}

Our next lemma studies a particular sort of complete multilinear $q$-uniform polynomials that we will use frequently.
\begin{lemma}\label{lem:completeMultilinear}
Given $Z = \binom{\sum_{i=1}^n X_i}{q} = \sum_{h \subseteq [n] : |h| = q} \prod_{v \in h} X_v$ where the $X_v$ are independent 0/1 random variables with $\Eb{X_i} = p \le 0.5$ we have
\begin{itemize}
\item $\mu_i(Z) = \binom{n-i}{q-i} p^{q-i} \le (np)^{q-i}$ and
\item  $\pr{Z = \binom{c}{q}} = \binom{n}{c} p^{c} (1-p)^{n-c} \ge e^{-2np} (\frac{np}{c})^c$ for any integer $0 \le c \le n$.
\end{itemize}
\end{lemma}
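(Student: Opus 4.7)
My plan is to derive both bullets directly from the definition of $\mu_i$ and from elementary binomial and exponential estimates; there is no substantive obstacle beyond careful bookkeeping.

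For the first bullet, I will write $Z$ as the multilinear polynomial $Z=\sum_{h\subseteq [n],\,|h|=q}\prod_{v\in h}X_v$ with all weights equal to $1$ and substitute into the specialized definition $\mu_i(Z)=\max_{A\subseteq [n]:|A|=i}\sum_{h\supseteq A,\,|h|=q}\prod_{v\in h\setminus A}p$ recalled at the start of Section~\ref{tight}. By symmetry the inner sum does not depend on $A$: for any $A$ of size $i$ there are exactly $\binom{n-i}{q-i}$ hyperedges of size $q$ containing $A$, and each contributes $p^{|h\setminus A|}=p^{q-i}$. This yields the claimed equality $\mu_i(Z)=\binom{n-i}{q-i}p^{q-i}$. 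The upper bound $\le (np)^{q-i}$ follows from $\binom{n-i}{q-i}\le n^{q-i}/(q-i)!\le n^{q-i}$.

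For the second bullet, I will exploit the fact that $Z$ depends on the $X_v$ only through $S=\sum_v X_v$, with $Z=\binom{S}{q}$. Since $\binom{\cdot}{q}$ is strictly increasing on $c\ge q$, the event $\{Z=\binom{c}{q}\}$ coincides with $\{S=c\}$ in that range, giving the equality $\pr{Z=\binom{c}{q}}=\binom{n}{c}p^c(1-p)^{n-c}$ from the binomial pmf. To obtain the final lower bound I will invoke two standard estimates. First, $\binom{n}{c}\ge (n/c)^c$, which follows from the termwise inequality $\frac{n-i}{c-i}\ge \frac{n}{c}$ for $0\le i<c\le n$. Second, $(1-p)^{n-c}\ge (1-p)^n\ge e^{-2np}$, using $-\ln(1-p)=\sum_{k\ge 1}p^k/k\le p/(1-p)\le 2p$ for $p\le 1/2$. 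Multiplying the three factors $(n/c)^c$, $p^c$, and $e^{-2np}$ delivers $(np/c)^c e^{-2np}$, which is the stated bound.

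The only mildly subtle point is the boundary case $c<q$, where $\binom{c}{q}=0$ and the events $\{Z=0\}$ and $\{S=c\}$ differ; however, the stated lower bound remains valid because $\{S=c\}\subseteq\{Z=0\}$ still gives $\pr{Z=\binom{c}{q}}\ge \binom{n}{c}p^c(1-p)^{n-c}$. Every step above is a two-line calculation, so I do not anticipate any technical difficulty.
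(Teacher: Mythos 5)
Your proof is correct and essentially matches the paper's: both establish the first bullet directly from the specialized definition of $\mu_i$ and symmetry, and both derive the second bullet by identifying $\{Z=\binom{c}{q}\}$ with $\{\sum_v X_v = c\}$ and then applying $\binom{n}{c}\ge(n/c)^c$ and $\ln(1-p)\ge -2p$ for $p\le 1/2$. The only difference is that you explicitly flag the $c<q$ boundary case, which the paper glosses over but which does not affect the stated lower bound.
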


\begin{proof}
The first is immediate from definitions. The second follows because
\[
\pr{Z = \binom{c}{q}} = \pr{\sum_i X_i = c} = \binom{n}{c} p^{c} (1-p)^{n-c} \ge (n/c)^c p^c e^{(n-c) \ln (1-p)} \ge \left(np/c\right)^c e^{-2np}
\]
where we used $\ln(1-p) \ge -2p$ for $0 \le p \le 1/2$ in the last inequality.
\end{proof}

\begin{lemma}\label{lem:LBOne}
For any $q \in \N$, $0<\epsilon \le 1$, $\lambda > \mu_q^* > 0$, there is a non-negative power $q$ polynomial $f(x)$ and independent 0/1 random variables $X_1,\dots,X_m$ such that
\begin{itemize}
%\item $\mu_r(f) = 1$
\item $\mu_j(f,X) \le \epsilon^{q-j} \mu_q^*$ for all $0 \le j \le q$.
\item $\pr{f(X) - \Eb{f(X)} \ge \lambda} \ge \exp\left\{-2 \epsilon\right\} \left( \frac{\epsilon}{4 q (\lambda/\mu_q^*)^{1/q}} \right)^{4 q (\lambda/\mu_q^*)^{1/q}}$.
\item $\pr{f(X) - \Eb{f(X)} \ge \mu_q^*} \ge \exp\left\{-2 \epsilon\right\}  \left(\frac{\epsilon}{q+1}\right)^{q+1}$.
\end{itemize}
\end{lemma}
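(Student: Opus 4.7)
The plan is to take essentially the same construction as in Lemma \ref{lem:completeMultilinear}: let $X_1,\dots,X_n$ be i.i.d.\ Bernoulli with $p=\epsilon/n$ for a sufficiently large integer $n$ (in particular, $n\ge \max\{2\epsilon,\,q+1,\,4q(\lambda/\mu_q^*)^{1/q}\}$ so that $p\le 1/2$ and the values of $c$ used below are in $[q,n]$), and set
\[
f(X)\;=\;\mu_q^*\cdot\binom{\sum_{i=1}^n X_i}{q}\;=\;\mu_q^*\sum_{h\subseteq[n],\,|h|=q}\prod_{v\in h}X_v .
\]
This is a nonnegative multilinear polynomial of power exactly $q$, and since $f$ is a monotone function of $\sum_i X_i$, the event $\{\sum_iX_i=c\}$ (for $c$ to be chosen) implies $f(X)=\mu_q^*\binom{c}{q}$.

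For the first bullet I would just invoke Lemma \ref{lem:completeMultilinear}: $\mu_j(f)=\mu_q^*\cdot\mu_j(\binom{\sum X_i}{q})\le \mu_q^*(np)^{q-j}=\mu_q^*\epsilon^{q-j}$. As a side effect, $\Eb{f(X)}=\mu_q^*\binom{n}{q}p^q\le \mu_q^*\epsilon^q/q!\le \mu_q^*$, a bound I will reuse.

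For the second bullet, set $c=\lfloor 4q(\lambda/\mu_q^*)^{1/q}\rfloor$, which satisfies $c\ge 4q\ge q$. Using the elementary inequality $\binom{c}{q}\ge (c/q)^q$ for $c\ge q$ together with $(\lambda/\mu_q^*)^{1/q}>1$, a direct calculation gives $\binom{c}{q}\ge \big(4(\lambda/\mu_q^*)^{1/q}-1/q\big)^q\ge 3^q(\lambda/\mu_q^*)\ge 2\lambda/\mu_q^*$. Combined with $\Eb{f(X)}\le \mu_q^*\le \lambda$, this yields $f(X)-\Eb{f(X)}\ge 2\lambda-\lambda=\lambda$ whenever $\sum_iX_i=c$. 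By Lemma \ref{lem:completeMultilinear} the latter event has probability at least $e^{-2np}(np/c)^c=e^{-2\epsilon}(\epsilon/c)^c$. Because $c\le 4q(\lambda/\mu_q^*)^{1/q}$ and the function $x\mapsto(\epsilon/x)^x$ is decreasing for $x\ge \epsilon/e$ (which holds here since $c\ge 4\ge \epsilon$), we conclude $(\epsilon/c)^c\ge (\epsilon/(4q(\lambda/\mu_q^*)^{1/q}))^{4q(\lambda/\mu_q^*)^{1/q}}$, which is exactly the required bound.

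For the third bullet the same polynomial works with $c=q+1$: then $\binom{c}{q}=q+1$, so $f(X)=(q+1)\mu_q^*$ and $f(X)-\Eb{f(X)}\ge q\mu_q^*\ge \mu_q^*$. Again Lemma \ref{lem:completeMultilinear} gives $\Pr[\sum_iX_i=q+1]\ge e^{-2\epsilon}(\epsilon/(q+1))^{q+1}$, finishing the proof. The only subtle point — and the place where one has to be careful — is reconciling the integer $c$ with the (real-valued) exponent $4q(\lambda/\mu_q^*)^{1/q}$ appearing in the stated bound; using the floor together with monotonicity of $(\epsilon/x)^x$ is what makes the constants line up cleanly.
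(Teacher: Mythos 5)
Your proof is correct and takes essentially the same approach as the paper: the same complete $q$-uniform polynomial $\mu_q^*\binom{\sum_i X_i}{q}$ with $p=\epsilon/n$, the same appeal to Lemma \ref{lem:completeMultilinear}, and the same choice $c=q+1$ for the third bullet. The only cosmetic difference is in the second bullet, where the paper picks $c$ as the threshold index satisfying $\binom{c-1}{q}\le (\lambda+\Eb{f})/\mu_q^* < \binom{c}{q}$ and then bounds $c\le 4q(\lambda/\mu_q^*)^{1/q}$, whereas you set $c=\lfloor 4q(\lambda/\mu_q^*)^{1/q}\rfloor$ directly and verify $\binom{c}{q}\ge 2\lambda/\mu_q^*$; both then use the monotonicity of $x\mapsto(\epsilon/x)^x$ to pass from $(\epsilon/c)^c$ to the stated bound.
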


\begin{proof}
We pick $f(x) = \mu_q^*\cdot \sum_{I \subseteq M, |I| = q} \prod_{i \in I} x_i$ where $|M|=m=\ceil{4 q (\lambda/\mu_q^*)^{1/q}}$ and each $X_i$ is 1 with probability $\epsilon/m \le 1/2$. By Lemma \ref{lem:completeMultilinear} we have
\[
\mu_j(f) \le \left(m \cdot \frac{\epsilon}{m}\right)^{q-j}\mu_q^* = \epsilon^{q-j}\mu_q^*.
\]

The third part of the lemma follows from Lemma \ref{lem:completeMultilinear} with $c=q+1$. Indeed $\binom{q+1}{q} =q+1\ge 2 \ge \epsilon^q+1 \ge (\Eb{f} + \mu_q^*)/\mu_q^*$. Therefore,
$$\pr{f(X) - \Eb{f(X)} \ge \mu_q^*} \ge \pr{f(X) =\mu_q^*\binom{c}{q}}\ge  e^{-2\epsilon} \left(\frac{\epsilon}{c}\right)^c  =
\exp\left\{-2 \epsilon\right\}  \left(\frac{\epsilon}{q+1}\right)^{q+1}.$$

Towards proving the second part of the lemma choose $c$ such that
\[
\binom{c-1}{q} \le \frac{\lambda + \Eb{f}}{\mu_q^*} < \binom{c}{q}
.\]
We have $\lambda + \Eb{f} \ge \mu_q^*$ so such a $c \ge q+1$ exists.
Note that
\[
\frac{(c-1)^q}{q^q} \le \binom{c-1}{q} \le \frac{\lambda + \Eb{f}}{\mu_q^*} \le \frac{\lambda+\mu_q^*}{\mu_q^*} \le \frac{2\lambda}{\mu_q^*}
\]
hence
\[
c \le 1 + q (2 \lambda/\mu_q^*)^{1/q} \le 4 q (\lambda/\mu_q^*)^{1/q} \le m
.\]

By the second part of Lemma \ref{lem:completeMultilinear} we have
\begin{align*}
\pr{f(X)- \Eb{f(X)} \ge \lambda} & \ge
\pr{f(X) = \mu_q^*\binom{c}{q}} \ge e^{-2\epsilon} \left(\frac{\epsilon}{c}\right)^c
\ge e^{-2\epsilon} \left( \frac{\epsilon}{4 q (\lambda/\mu_q^*)^{1/q}} \right)^{4 q (\lambda/\mu_q^*)^{1/q}}
.\end{align*}
\end{proof}

The \emph{binomial distribution} $B(n,p)$ is the distribution of the sum of $n$ independent 0/1 random variables each with mean $p$.
The following lower bound on concentration of binomially distributed random variables is well known (e.g. \cite{F} has more general and precise bounds) but we include a proof for completeness in the Appendix.
\begin{lemma}\label{claim:chernoffTight}
For any $\mu  \ge 27$ and $0 < \lambda \le \mu$ there exists a binomially distributed random variable $Z$ with $\Eb{Z} =\mu$ and
\begin{align}
\pr{Z \ge \Eb{Z} + \lambda} &\ge e^{-100-\frac{\lambda^2}{\Eb{Z}}} \label{eqn:binomLB}
.\end{align}
\end{lemma}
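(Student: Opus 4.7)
The plan is to take $Z$ to be a binomial that closely approximates the Poisson$(\mu)$ distribution, and then lower-bound the Poisson tail via Stirling's formula applied to a short window of values starting at $\mu+\lambda$.

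Fix $Z \sim B(n,\mu/n)$, which has $\Eb{Z}=\mu$. By the classical Poisson limit theorem, $Z$ converges in distribution to $Y \sim \mathrm{Poisson}(\mu)$ as $n \to \infty$; choosing $n$ large enough (depending on $\mu$ and $\lambda$) gives $\pr{Z \ge \mu+\lambda} \ge \tfrac{1}{2}\pr{Y \ge \mu+\lambda}$. It therefore suffices to prove $\pr{Y \ge \mu+\lambda} \ge 2 e^{-100-\lambda^2/\mu}$. Stirling's upper bound $k! \le e\sqrt{k}(k/e)^k$ yields, for any integer $k \ge 1$,
\[
\pr{Y=k} \;=\; \frac{e^{-\mu}\mu^k}{k!} \;\ge\; \frac{1}{e\sqrt{k}}\exp\bigl(-\mu I(k/\mu)\bigr),
\]
where $I(y)=y\ln y - y + 1$ is the Poisson rate function. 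The key analytic fact is $I(1+s) = \int_0^s \ln(1+u)\,du \le \int_0^s u\,du = s^2/2$ for $s \ge 0$, whence $\mu I(k/\mu) \le (k-\mu)^2/(2\mu)$ for $k \ge \mu$.

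I would then lower-bound $\pr{Y \ge \mu+\lambda}$ by summing $\pr{Y=k}$ over a window $k = \lceil\mu+\lambda\rceil,\ldots,\lceil\mu+\lambda\rceil+r-1$ of length $r = \max\bigl(1,\lfloor\min(\sqrt\mu,\mu/(2\lambda+2))\rfloor\bigr)$. A short computation shows $(k-\mu)^2/(2\mu) \le \lambda^2/(2\mu) + 2$ uniformly over the window, and $k \le 3\mu$, so
\[
\pr{Y \ge \mu+\lambda} \;\ge\; \frac{r}{e\sqrt{3\mu}}\,e^{-\lambda^2/(2\mu)-2}.
\]

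The final step is to verify that this exceeds $2 e^{-100-\lambda^2/\mu}$, which after taking logarithms reduces to $(r/\sqrt\mu)\,e^{\lambda^2/(2\mu)} \ge C e^{-98}$ for a small absolute constant $C$. The crucial factor-of-two gap between the exponent $\lambda^2/(2\mu)$ coming from the rate function and the weaker exponent $\lambda^2/\mu$ allowed by the statement provides enough slack: writing $\alpha = \lambda/\sqrt\mu$, for $\alpha \le 1$ we have $r/\sqrt\mu \ge 1/2$ and $e^{\lambda^2/(2\mu)} \ge 1$, while for $\alpha \ge 1$ one has $r/\sqrt\mu \asymp 1/\alpha$ and the map $\alpha \mapsto e^{\alpha^2/2}/(2\alpha)$ attains its minimum $\approx 0.82$ at $\alpha=1$ and grows thereafter. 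The hypothesis $\mu \ge 27$ together with the generous constant $100$ absorbs all the remaining $O(1)$ losses. The main technical nuisance is keeping the window analysis uniform in $\lambda$; the degenerate case $r=1$ (i.e.\ $\lambda$ close to $\mu$) has to be checked directly from Stirling, using that $\lambda^2/\mu$ is already large enough in that regime for the exponential slack to swallow the polynomial prefactor.
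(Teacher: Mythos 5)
Your approach is correct and takes a genuinely different route from the paper's, although both share the same overall two-step scaffolding: first prove the lower bound for a Poisson random variable $Y$ with mean $\mu$, then transfer to a binomial $B(n,\mu/n)$ by the Poisson limit theorem (you lose a factor of 2, the paper loses a slightly smaller additive constant in the exponent; same effect). Where you diverge is in the Poisson step. The paper invokes Theorem A.2.1 of Alon--Spencer, a general MGF-based \emph{lower} bound for tails of the form $\pr{Y > a-u} \ge e^{-tu}\bigl[g_a(t) - e^{-\epsilon u}(g_a(t+\epsilon)+g_a(t-\epsilon))\bigr]$, and then makes a careful choice of the four parameters $a,u,t,\epsilon$ to extract $e^{-96-\lambda^2/\mu}$. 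You instead lower-bound $\pr{Y=k}$ directly via Stirling's upper bound on $k!$, rewrite the resulting exponent in terms of the Poisson rate function $I$, use the elementary inequality $I(1+s)\le s^2/2$, and sum over a window of $r$ consecutive values. Your route is more self-contained (no appeal to a pre-packaged anticoncentration theorem) and makes the source of the slack transparent: the rate function naturally gives $\lambda^2/(2\mu)$, and it is the gap between $\lambda^2/(2\mu)$ and the target $\lambda^2/\mu$ that absorbs the polynomial prefactor $\sqrt{\mu}/r$. The paper's argument is shorter once you accept the black-box theorem, but hides the mechanism. One small caveat: you acknowledge that your final case analysis (in particular the edge case $r=1$ when $\lambda$ is near $\mu$, and verifying $r/\sqrt{\mu}$ is bounded below when $\lambda\lesssim\sqrt{\mu}$) is left at the level of a sketch; those inequalities do close given $\mu\ge 27$ and the generous $e^{-100}$, but they should be written out, since the clean claim ``$r/\sqrt{\mu}\ge 1/2$ for $\alpha\le 1$'' is slightly off as stated — with your choice of $r$ one only gets $r/\sqrt{\mu}\gtrsim 1/3$ for $\mu\ge 27$, which is still plenty.
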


{\bf Remark:} The restriction that $\mu$ is bounded away from zero is needed since when $\mu = \lambda \ll 1$ the right hand side of (\ref{eqn:binomLB}) is constant and the left hand side is necessarily small because $\pr{Z \ge \Eb{Z} + \lambda} = \pr{Z \ge 1} \le \Eb{Z} = \mu$.

\begin{lemma}\label{lem:LBTwo}
For any $q \in \N$, $\mu_q^* > 0$, $\mu_0^* \ge 27\mu_q^*$, $0 < \lambda \le \mu_0^*$ and $0<\epsilon \le 1$ there is a polynomial $f$ of power $q$ and independent 0/1 random variables $X_1,\dots,X_m$ such that
\begin{itemize}
\item $\mu_0(f) = \mu_0^*$,
\item $\mu_q(f) = \mu_q^*$,
\item $\mu_j(f) \le \epsilon \mu_q^*$ for all $1 \le j \le q-1$,
\item $\pr{f(X) - \Eb{f(X)} \ge \lambda} \ge e^{-100} e^{-\frac{\lambda^2}{\mu_0^* \mu_q^*}}$.
\end{itemize}
\end{lemma}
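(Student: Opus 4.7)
The plan is to make $f(X)$ equal to $\mu_q^*$ times a binomial random variable with mean $\mu_0^*/\mu_q^*$, so that the desired tail lower bound falls out of Lemma~\ref{claim:chernoffTight}. Concretely, I would introduce $m = nq$ fresh Bernoulli variables $X_1,\dots,X_m$ (each with mean $p$, to be chosen), partition them into $n$ pairwise disjoint blocks $B_1,\dots,B_n$ of size $q$, and set
\[
f(x) \;=\; \mu_q^* \sum_{i=1}^n \prod_{v \in B_i} x_v.
\]
I would then fix $p = (\mu_0^*/(n\mu_q^*))^{1/q}$, so that $np^q = \mu_0^*/\mu_q^*$, and take $n$ large enough that both $p \le \epsilon$ (i.e.\ $n \ge \mu_0^*/(\mu_q^* \epsilon^q)$) and that the binomial produced by the proof of Lemma~\ref{claim:chernoffTight} can be realized as $\mathrm{Binomial}(n, p^q)$.

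Next I would verify the prescribed $\mu_j$ values directly from the disjoint-block structure. Any set $A$ with $|A|=j$ is contained in at most one block $B_i$: if it is, then the only hyperedge of $f$ extending $A$ is $B_i$ itself, which contributes $\mu_q^* \prod_{v \in B_i \setminus A} p = \mu_q^* p^{q-j}$; otherwise the sum defining $\mu_j$ is empty. Hence $\mu_j(f) = \mu_q^* p^{q-j}$ exactly for every $0 \le j \le q$, which instantly gives $\mu_0(f)=\mu_0^*$, $\mu_q(f)=\mu_q^*$, and $\mu_j(f) = \mu_q^* p^{q-j} \le \mu_q^* p \le \epsilon \mu_q^*$ for every $1 \le j \le q-1$.

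For the tail bound, observe that $Z_i := \prod_{v \in B_i} X_v$ are i.i.d.\ $\{0,1\}$-valued with $\Pr[Z_i = 1] = p^q$, so $S := \sum_{i=1}^n Z_i \sim \mathrm{Binomial}(n, p^q)$ has mean $\mu := \mu_0^*/\mu_q^* \ge 27$ (using the hypothesis $\mu_0^* \ge 27 \mu_q^*$), and $f(X) = \mu_q^* S$. Setting $\lambda_Z := \lambda/\mu_q^*$, the hypothesis $\lambda \le \mu_0^*$ gives $0 < \lambda_Z \le \mu$, so Lemma~\ref{claim:chernoffTight} yields
\[
\pr{f(X) - \Eb{f(X)} \ge \lambda}
 \;=\; \pr{S - \mu \ge \lambda_Z}
 \;\ge\; e^{-100 - \lambda_Z^2/\mu}
 \;=\; e^{-100}\,e^{-\lambda^2/(\mu_0^*\mu_q^*)},
\]
which is the required bound.

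The main obstacle is that Lemma~\ref{claim:chernoffTight} only asserts the existence of \emph{some} binomial with the prescribed mean achieving the tail lower bound, not that our specific $\mathrm{Binomial}(n, p^q)$ does. I would address this by observing that the proof of Lemma~\ref{claim:chernoffTight} is robust enough to apply to any binomial with the right mean and sufficiently many trials (equivalently, sufficiently small per-trial success probability); since we are free to take $n$ arbitrarily large, this only strengthens the constraint $p \le \epsilon$, and both requirements can be met simultaneously.
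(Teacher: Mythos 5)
Your construction and argument are essentially the same as the paper's: disjoint blocks of size $q$, Bernoulli probability $p = (\mu_0^*/(n\mu_q^*))^{1/q}$ with $n$ large, reduction to a Binomial$(n,p^q)$ with mean $\mu_0^*/\mu_q^* \ge 27$, and then Lemma~\ref{claim:chernoffTight}, whose ``there exists a binomial'' qualification is handled in both proofs by taking $n$ sufficiently large. One small slip: the formula $\mu_j(f) = \mu_q^* p^{q-j}$ only holds for $j \ge 1$ (where $A$ is nonempty and lies in at most one block); for $j = 0$ the empty set is extended by all $n$ blocks, giving $\mu_0(f) = n\mu_q^* p^q = \mu_0^*$, which is the value you state but not what your displayed formula would produce.
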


\begin{proof}
We fix a sufficiently large integer $n$ such that $\left(\frac{\mu_0^*}{n \mu_q^*}\right)^{1/q} \le  \epsilon$ and pick our polynomial $f(X)$ to be essentially a linear function in disguise:
\[
f(X) = \mu_q^* \cdot \sum_{0 \le i \le n-1} X_{qi+1}\cdot X_{qi+2}\cdot\ldots\cdot X_{qi+q}
\]
where  $X_i$ are boolean random variables with $Pr[X_i=1]= \left(\frac{\mu_0^*}{n \mu_q^*}\right)^{1/q} \le  \epsilon$. Observe that $\mu_q(f)=\mu_q^*$, $\mu_0(f)=\mu_0^*$, and for $1 \le i \le q-1$ we have $\mu_i(f) = (\frac{\mu_0}{n\mu_q^*})^{(q-i)/q} \mu_q^*\le \epsilon^{q-i} \mu_q^*\le \epsilon \mu_q^*$.

Observe that $f(X)/\mu_q^*$ has the same distribution as a binomially distributed random variable with mean $\mu_0^*/\mu_q^* \ge 27$. The lower-bound on $\pr{f(X) - \Eb{f(X)} \ge \lambda}$ therefore follows from Lemma \ref{claim:chernoffTight} for sufficiently large $n$.
\end{proof}

The following lemma shows how to use a counterexample polynomial of power less than $q$ in place of a counterexample of power $q$.

\begin{lemma}\label{lem:increaseDegree}
For any $\epsilon>0$, $q$-uniform hypergraph $H=({\cal V},{\cal  H})$, non-negative weights $w$, polynomial $f(x)=\sum_{h\in {\cal H}} w_h\prod_{v\in h}x_v$, independent 0/1 random variables $X_1,\dots,X_n$
 and any $q'>q$ there exists a $q'$-uniform hypergraph $H=({\cal V'},{\cal  H'})$, non-negative weights $w'$, independent 0/1 random variables $X_1',\dots,X_{n'}'$ and polynomial $f'(x')=\sum_{h'\in {\cal H}'} w'_{h'}\prod_{i\in h'}x'_i$ such that
\begin{itemize}
\item $\mu_i(f') \le \mu_i(f)$ for all $i \le q$,
\item $\mu_i(f') \le \epsilon \mu_q(f)$ for all $q < i \le q'$,
\item $\pr{f'(X') - \Eb{f'(X')} \ge \lambda} \ge 2^{-(q'-q)}\pr{f(X) - \Eb{f(X)} \ge \lambda}$.
\end{itemize}
\end{lemma}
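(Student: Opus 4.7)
The plan is to construct $f'$ by splitting each hyperedge of $f$ into many low-weight copies, each padded with fresh $\mathrm{Bern}(1/2)$ dummies. Pick an integer $M \ge 1$ large (its size to be specified below), set $N = M \cdot 2^{q'-q}$, and for each $j \in [N]$ introduce $q'-q$ fresh independent $\mathrm{Bern}(1/2)$ random variables $Y^{(j)}_1, \ldots, Y^{(j)}_{q'-q}$, jointly independent of $X$. For each pair $(h, j) \in {\cal H}(H) \times [N]$, add to $f'$ the $q'$-uniform hyperedge $h \cup \{Y^{(j)}_1, \ldots, Y^{(j)}_{q'-q}\}$ with non-negative weight $w_h/M$, so that
\[
f'(X, Y) \;=\; \frac{f(X)}{M} \sum_{j=1}^{N} Z_j, \qquad Z_j := \prod_{k=1}^{q'-q} Y^{(j)}_k \sim \mathrm{Bern}\bigl(2^{-(q'-q)}\bigr).
\]
The normalization $N/M = 2^{q'-q}$ ensures $\E[f'] = \E[f]$.

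For the concentration statement, let $S := \sum_{j=1}^{N} Z_j \sim \mathrm{Bin}(N, 2^{-(q'-q)})$; since $\E[S] = M$ is an integer, the classical median-of-binomial fact gives $\pr{S \ge M} \ge 1/2$. On the event $\{f - \E[f] \ge \lambda\} \cap \{S \ge M\}$, whose two components are independent by construction, $f' = (f/M) S \ge f \ge \E[f] + \lambda = \E[f'] + \lambda$, so
\[
\pr{f' - \E[f'] \ge \lambda} \;\ge\; \tfrac{1}{2}\,\pr{f - \E[f] \ge \lambda} \;\ge\; 2^{-(q'-q)}\,\pr{f - \E[f] \ge \lambda}.
\]

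For the $\mu_i$ bounds, fix $A$ with $|A|=i$ and decompose $A = A_X \sqcup A_Y$, with $A_Y$ inside the dummies. Each hyperedge of $f'$ contains the $q'-q$ dummies of exactly one family $j_0$, so $A_Y$ must be either empty (``Case A'', forcing $|A_X|=i \le q$) or contained in a single family (``Case B'', with $|A_X|=a$ and $|A_Y| = b \ge 1$). A direct computation, in which the normalization $(N/M) 2^{-(q'-q)} = 1$ cancels the dummies' expectations in Case A, shows that the Case A contribution equals $\sum_{h \supseteq A_X} w_h \prod_{v \in h \setminus A_X} \E[X_v] \le \mu_i(f)$, while the Case B contribution is at most $(1/M)(1/2)^{q'-q-b}\mu_a(f) \le (1/M)\max_{0 \le a \le q}\mu_a(f)$. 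Choosing
\[
M \;\ge\; \max\Bigl\{\, \tfrac{1}{\epsilon}\max_{0 \le a \le q}\tfrac{\mu_a(f)}{\mu_q(f)},\ \max_{0 \le a,\,i \le q}\tfrac{\mu_a(f)}{\mu_i(f)} \,\Bigr\}
\]
forces $\mu_i(f') \le \mu_i(f)$ when $i \le q$ (both cases are bounded by $\mu_i(f)$) and $\mu_i(f') \le \epsilon \mu_q(f)$ when $q < i \le q'$ (only Case B applies there).

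The hardest step is the case analysis for $\mu_i(f')$: tracking the $(1/2)^{q'-q-b}$ factors coming from the dummies outside $A_Y$, and exploiting the fact that each hyperedge uses dummies from only one family, so that any ``cross-family'' $A_Y$ contributes nothing. The binomial-median inequality is a classical one-line fact, and the required $M$ is finite as long as all $\mu_i(f) > 0$, which is the regime in which this lemma is applied to the polynomials of Lemmas~\ref{lem:LBOne} and~\ref{lem:LBTwo}.
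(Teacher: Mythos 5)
Your proof is correct, and while it is similar in spirit to the paper's, it uses a genuinely different gadget. The paper multiplies $f$ by a \emph{product of $q'-q$ independent linear scalings} $g(Y)=\prod_{j=1}^{q'-q}\bigl(\tfrac{2}{m}\sum_{i=1}^m Y_{j,i}\bigr)$ with Bernoulli$(1/2)$ dummies, computes $\mu_i(g)=(2/m)^i$, invokes Lemma~\ref{lem:muProd} to combine, and obtains $\pr{g\ge 1}\ge 2^{-(q'-q)}$ from each symmetric factor separately. You instead multiply $f$ by the \emph{normalized binomial} $S/M$ with $S\sim\mathrm{Bin}(N,2^{-(q'-q)})$, $N=M2^{q'-q}$, by replicating each original hyperedge $N$ times padded with fresh $(q'-q)$-tuples of Bernoulli$(1/2)$ dummies; a direct $A=A_X\sqcup A_Y$ decomposition replaces the appeal to Lemma~\ref{lem:muProd}, and the integer-mean median fact gives $\pr{S\ge M}\ge 1/2$. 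The core scaling trick --- suppressing the ``cross'' contributions to $\mu_i$ by making the per-copy weight $1/M$ (your version) or $2/m$ (paper's version) small --- is the same, and so is the implicit requirement that all $\mu_i(f)>0$ so that the ratios are finite (both proofs have this, and it holds in the intended applications). A side benefit of your construction: it actually proves the stronger bound $\pr{f'-\Eb{f'}\ge\lambda}\ge\tfrac12\pr{f-\Eb{f}\ge\lambda}$, since the median of a binomial with integer mean equals that mean, whereas the paper's independent-factors argument degrades to $2^{-(q'-q)}$; both of course meet the stated $2^{-(q'-q)}$.
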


\begin{proof}
We let $f'(X,Y) = f(X)g(Y)$ where $g(Y)$ is a power-$(q'-q)$ polynomial that is well concentrated around 1. In particular we use
\[
g(Y) = \left(\frac{2}{m} \sum_{i=1}^m Y_{1,i} \right)\cdot \left(\frac{2}{m} \sum_{i=1}^m Y_{2,i} \right) \cdot\ldots\cdot \left(\frac{2}{m} \sum_{i=1}^m Y_{q'-q,i} \right)
\]
where the $n'=(q'-q)m$ random variables $Y_{ij}$ are independent with mean 1/2 and
\[
m=\ceil{\max \left(2/\epsilon, 2\max_{i,j} \frac{\mu_j(f, X)}{\mu_i(f, X)}\right)}
.\]
Note that $2/m \le \epsilon$ and $2/m \le \frac{\mu_j(f, X)}{\mu_i(f, X)}$ for any $0 \le i,j \le q$.  It is easy to see that $\mu_i(g, Y) = (2/m)^{i}$. Let $X'=X_1,\dots,X_n,Y_{1,1},\dots,Y_{q'-q,m}$ denote the random variables that $f'$ is a function of.
By Lemma \ref{lem:muProd} we get that
\begin{eqnarray}
 \mu_i(f', X') &=& \max_{0 \le j \le q : 0 \le i - j \le q'} \mu_j(f) \mu_{i-j}(g) \nonumber\\
 &=& \max_{0 \le j \le q : 0 \le i - j \le q'} \mu_j(f,X) (m/2)^{-(i-j)} \nonumber\\
 &=& \max_{0 \le j \le \min(q, i)} B_{ij} \label{eqn:mufg}
\end{eqnarray}
where $B_{ij}=\mu_j(f,X) (m/2)^{-(i-j)}$. We bound (\ref{eqn:mufg}) in two cases. The first case is $i \le q$. For any $0 \le j < i \le q$ we have
\begin{align}
B_{ij} &= \mu_j(f,X) (m/2)^{-(i-j)} \le \mu_j(f,X) (2/m) \le \mu_i(f,X) \label{eqn:ilej}
.\end{align}
Clearly $B_{ij} \le \mu_i(f,X)$ holds for $i=j$ as well, so we conclude that $\max_{0 \le j \le \min(q, i)} B_{ij} \le \mu_i(f,X)$ when $i \le q$.

The other case is when $i>q$. For any $0 \le j < q <i$ we have
\begin{align}
\mu_j(f,X) (m/2)^{-(i-j)} &\le \mu_j(f,X) (2/m)^2 \le \mu_j(f,X) \cdot  \frac{\mu_q(f,X)}{\mu_j(f,X)} \cdot \epsilon = \epsilon \mu_q(f,X) \label{eqn:igj}
.\end{align}
Similarly for $0 \le j = q <i$ we have
\begin{align}
\mu_j(f,X) (m/2)^{-(i-j)}&=\mu_q(f,X) (m/2)^{-(i-q)} \le \mu_q(f) (2/m) \le \epsilon \mu_q(f,X) \label{eqn:igj2}
.\end{align}
Combining (\ref{eqn:mufg}), (\ref{eqn:ilej}), (\ref{eqn:igj}) and (\ref{eqn:igj2}) we conclude that
\[
 \mu_i(f', X') \le \left\{\begin{array}{cc} \mu_{i}(f,X), & \text{if } i \le q, \\
 								\epsilon \mu_q(f,X), & \text{otherwise.} \end{array} \right.
\]

To show the last part of the lemma we bound
\begin{align*}
\pr{f(X)g(Y) - \Eb{f(X)g(Y)} \ge \lambda}
&\ge
\pr{f(X) - \Eb{f(X)} \ge \lambda \text{ and } g(X) \ge 1} \\
&=
\pr{f(X) - \Eb{f(X)} \ge \lambda} \pr{g(X) \ge 1} \\
& \ge \pr{f(X) - \Eb{f(X)} \ge \lambda} 2^{-(q'-q)}
\end{align*}
where the last inequality follows because each of the linear terms $\left(\frac{2}{m} \sum_{i=1}^m Y_{j,i} \right)$ in the definition of $g(Y)$ is distributed symmetrically about its mean of 1 and hence is at least one with probability at least $1/2$.
\end{proof}

\begin{proof}{\bf of Theorem \ref{thm:LB}.}
Fix $q$, $\lambda$ and $\set{\mu_i^*}_{0 \le i \le q}$. Let $i$ be the dominant term in (\ref{eq:LB}), i.e.\ $i$ minimizes $\min_i(\lambda^2 / (\mu_0^* \mu_i^*), (\lambda/\mu_i^*)^{1/i})$. We consider three cases.

The first case is when $\lambda \le \mu_i^*$. In this case we apply Lemma \ref{lem:LBOne} (third part) to get a power $i$ polynomial and then Lemma \ref{lem:increaseDegree} to convert it into a power $q$ polynomial, using $\epsilon = \min_{0 \le j,j' \le q} \mu_j^* / \mu_{j'}^*$ for both Lemmas. This yields 0/1 random variables $X_1,\dots,X_n$ and the desired power $q$ polynomial $f(X)$ with $\mu_j(f,X) \le \mu_j^*$ for $0 \le j \le q$ and
\begin{align}
\pr{f(X) - \Eb{f(X)} \ge \lambda} &\ge \pr{f(X) - \Eb{f(X)} \ge \mu_i} \nonumber \\
&\ge 2^{-(q-i)} \exp \left\{-2\epsilon + (i+1) \ln \left(\frac{\epsilon}{i+1}\right)\right\} \nonumber\\
&\ge 2^{-q} \exp \left\{-2 + (q+1) \ln \left(\frac{\epsilon}{q+1}\right)\right\}= \frac{1}{C_1} \nonumber \\
&\ge \max\left\{e^{-\left(\frac{\lambda^2}{\mu^*_0\mu^*_i} + 1\right)\log C_1},e^{-\left(\left(\frac{\lambda}{\mu^*_i }\right)^{1/i} +1\right)\log C_1}\right\} \label{eq:caseOne}
\end{align}
where $C_1=2^{q} e^{2} ((q+1)/\epsilon)^{q+1}$.

The second case is when $\lambda > \mu_i^*$ and $27 \lambda^2 / (\mu_0^* \mu_i^*) \ge (\lambda/\mu_i^*)^{1/i}$.
We apply Lemma \ref{lem:LBOne} (second part) and Lemma \ref{lem:increaseDegree} using $\epsilon = \min_{0 \le j,j' \le q} \mu_j^* / \mu_{j'}^*$ for both, yielding independent 0/1 random variables $X_1,\dots,X_n$ and a degree $q$ polynomial $f(X)$ with $\mu_j(f,X) \le \mu_j^*$ for $0 \le j \le q$ and
\begin{align}
\pr{f(X) - \Eb{f(X)} \ge \lambda} &\ge 2^{-(q-i)} \exp \left\{-2\epsilon + 4 i (\lambda/\mu_i^*)^{1/i} \ln \left(\frac{\epsilon}{4 i (\lambda/\mu_i^*)^{1/i}}\right)\right\} \nonumber\\
& \ge \frac{1}{C_2}(C_3)^{-(\lambda/\mu_i^*)^{1/i}} \nonumber\\
& \ge \frac{1}{C_2}(C_4)^{-\min((\lambda/\mu_i^*)^{1/i}, \lambda^2 / (\mu_0^* \mu_i^*))} \nonumber\\
& \ge \max\left\{e^{-\left(\frac{\lambda^2}{\mu^*_0\mu^*_i} + 1\right)\log (\max \{C_2, C_4 \})},e^{-\left(\left(\frac{\lambda}{\mu^*_i }\right)^{1/i} +1\right)\log (\max \{C_2, C_4 \})}\right\} \label{eq:caseTwo}
\end{align}
for $C_2 = e^2 2^q$, $C_3 = (\lambda/\mu_i^*)^{4} (\frac{4 i}{\epsilon})^{4i}$, and $C_4 = C_3^{27}$.

The final case is when $\lambda > \mu_i^*$ and $27 \lambda^2 / (\mu_0^* \mu_i^*) < (\lambda/\mu_i^*)^{1/i}$.
These constraints imply that $\mu_0^* > 27 \lambda^{2-1/i} (\mu_i^*)^{1/i - 1} \ge 27 \lambda$, hence $\lambda < \mu_0^*$. We also have $\mu_0^* >27 \lambda > 27 \mu_i^*$.
We apply Lemmas \ref{lem:LBTwo} and Lemma \ref{lem:increaseDegree} with $\epsilon = \min_{0 \le j,j' \le q} \mu_j^* / \mu_{j'}^*$ for both, yielding a polynomial $f$ and independent 0/1 random variables $X_1,\dots,X_n$ with $\mu_j(f,X) \le \mu_j^*$ for $0 \le j \le q$ and
\begin{align}
Pr\left[f(X_1,\dots,X_n)\ge \Eb{f} + \lambda\right] &\ge e^{-100} 2^{-(q-i)} e^{-\frac{\lambda^2}{\mu_0\mu_i}} \nonumber \\
&\ge \frac{1}{C_5} e^{-\frac{\lambda^2}{\mu_0\mu_i}} \nonumber \\
& \ge \max\left\{e^{-\left(\frac{\lambda^2}{\mu^*_0\mu^*_i} + 1\right)\log C_5},e^{-\left(\left(\frac{\lambda}{\mu^*_i }\right)^{1/i} +1\right)\log C_5}\right\} \label{eq:caseThree}
\end{align}
where $C_5 = e^{100} \cdot 2^q$. This completes the case analysis.

Let $C = \max \{C_1, C_2, C_4, C_5\} \le c_0\Lambda_1^{c_1}\Lambda_2^{c_2}\Lambda_3^{c_3}$ for appropriate absolute constants $c_0$, $c_1$, $c_2$ and $c_3$ where $\Lambda_1 =\max_{0 \le i,j \le q} (\mu_i^*/\mu_j^*)^q=\epsilon^{-q}$, $\Lambda_2=\max_{1 \le i \le q} \lambda/\mu_i^*$  and $\Lambda_3=q^q$. The Theorem follows from (\ref{eq:caseOne}), (\ref{eq:caseTwo}) and (\ref{eq:caseThree}).
\end{proof}

\section*{Acknowledgments}
We would like to thank an anonymous referee for many insightful and helpful comments.

%%%%%%%%%%%%%%%%%%%%%%%%%%%%%%%%%%%%%%%%%%%%%%%%%%%%%%%%%%

%%%%%%%%%%%%%%
\appendix

\section{Linear special case}\label{sec:linear}

In this section we give a short proof of the linear case ($q=1$) of Theorem \ref{main1special}. Concentration in this case was already known, but this special case nicely illustrates many of our techniques with minimal technical complications. In this case hyperedges are just single vertices, so to simplify notation we make no reference to hyperedges. The rest of the proofs appear in the full version of the paper.

\medskip

We have $n$ vertices $1,2,\ldots,n$, independent random variables $Y_1,\dots,Y_n$ that are moment bounded with parameter $L$, and weights $w_1,\dots,w_n$. We assume that $\Eb{Y_v}=0$ and $w_v \ge 0$ for all $v \in [n]$. We are looking for concentration of $f(Y) = \sum_{v \in [n]} w_v Y_v$. Our bounds are based on the parameters $\mu_0 = \sum_{v \in [n]} w_v \Eb{|Y_v|}$ and $\mu_1 = \max_{v \in [n]} w_v$.

Fix even integer $k \ge 2$. By linearity of expectation and independence we have
\begin{eqnarray}
\Eb{f(Y)^k} &=& \sum_{v_1,\dots,v_k \in [n]} w_{v_1}\cdot\dots\cdot w_{v_k} \Eb{Y_{v_1}\cdot\dots\cdot Y_{v_k}} \nonumber \\
&=& \sum_{v_1,\dots,v_k \in [n]} w_{v_1}\cdot\dots\cdot w_{v_k} \prod_{v \in \{v_1,\dots,v_k\}} \Eb{Y_{v}^{|\{i \in [k]: v_i=v\}|}} .
   \label{eq:1}
\end{eqnarray}
For conciseness we write the sum over $v_1,\dots,v_k \in [n]$ in (\ref{eq:1}) as a sum over vectors $\bar v \in [n]^k$ (with components $v_1,\dots,v_k$). The sum over $\bar v$ in (\ref{eq:1}) is awkward to bound because it is very inhomogeneous, including e.g.\ both the case when $v_1=\dots=v_k$ and the case that all the $v_i$ are distinct. We deal with this issue as follows.
Intuitively we generate $\bar v$ by first picking the number of distinct vertices $\ell =|\{v_1,\dots,v_k\}| = |\{\bar v\}|$, secondly picking a vector $\bar u \in [\ell]^k$ (with components $u_1,\dots,u_k \in [\ell]$) of artificial vertices, and finally choosing an injective mapping $\pi$ from the artificial vertices $[\ell]$ into the real vertices $[n]$ and letting $v_i = \pi(u_i)$. This process generates each vector $\bar v$ a total of $\ell!$ times since the names of the artificial vertices are arbitrary.
Combining the above with (\ref{eq:1}) we have
\begin{equation}
\Eb{f(Y)^k} = \sum_{\ell=1}^{k} \frac{1}{\ell!} \sum_{\bar u \in [\ell]^k : |\{\bar u\}|=\ell} ~ \sum_{\pi \in M(\ell)}
w_{\pi(u_1)}\cdots w_{\pi(u_k)} \prod_{u \in [\ell]}\Eb{Y_{\pi(u)}^{|\{i \in [k]: u_i=u\}|}}
\label{eq:1p9}
\end{equation}
where $M(\ell)$ denotes the set of all injective functions from $[\ell]$ to $[n]$. We introduce the notation $d_u=d_u(u_1,\dots,u_k)=|\{i \in [k]: u_i=u\}|$ for the power of $Y_{\pi(u)}$ in (\ref{eq:1p9}).  If any $d_u = 1$ we have $\Eb{Y_{\pi(u)}^{d_u}}=0$, so we can limit the sum in (\ref{eq:1p9}) to the set ${\cal S}_2(\ell)$ of vectors $\bar u \in [\ell]^k$ with $d_1(\bar u),\dots,d_\ell(\bar u) \ge 2$. The constraint that $|\{\bar u\}|=\ell$ is clearly satisfied for all $\bar u \in {\cal S}_2(\ell)$ so we can safely drop it. Note that $\sum_{u \in [\ell]} d_u = k$, so we therefore can reduce the range of $\ell$ to $1 \le \ell \le k/2$. Consequently we have
\begin{equation}
\Eb{f(Y)^k} = \sum_{\ell=1}^{k/2} \frac{1}{\ell!} \sum_{\bar u \in {\cal S}_2(\ell)}
\underbrace{ \sum_{\pi \in M(\ell)} w_{\pi(u_1)}\cdots w_{\pi(u_k)} \prod_{u \in [\ell]} \Eb{Y_{\pi(u)}^{d_u(\bar u)}} }_{(*)}
\label{eq:2}
\end{equation}
We bound $(*)$ as
\begin{align}
(*) &= \sum_{\pi \in M(\ell)} w_{\pi(u_1)}\cdot\dots\cdot w_{\pi(u_k)} \prod_{u \in [\ell]} \Eb{Y_{\pi(u)}^{d_{u}(\bar u)}} \nonumber\\
&\le \sum_{\pi \in M(\ell)} w_{\pi(u_1)}\cdot\dots\cdot w_{\pi(u_k)} \prod_{u \in [\ell]} \Eb{|Y_{\pi(u)}|^{d_{u}(\bar u)}} \nonumber\\
&\le \sum_{\pi \in M(\ell)} w_{\pi(u_1)}\cdot\dots\cdot w_{\pi(u_k)} \prod_{u \in [\ell]} L^{d_u-1} \cdot d_u! \cdot \Eb{|Y_{\pi(u)}|} \nonumber\\
&\le \mu_1^{k-\ell} \sum_{\pi \in M(\ell)}\prod_{u \in [\ell]} w_{\pi(u)} L^{d_u-1} \cdot d_u! \cdot \Eb{|Y_{\pi(u)}|} \nonumber\\
&= \mu_1^{k-\ell} L^{k-\ell} \left(\prod_{u \in [\ell]} d_u!\right) \sum_{\pi \in M(\ell)} \prod_{u \in [\ell]} w_{\pi(u)} \Eb{|Y_{\pi(u)}|} \label{eq:2.5}
\end{align}
where the second inequality uses moment boundedness $d_u-1$ times (per $u$) and the third inequality follows because $\mu_1=\max_v w_v$. We now extend the sum over $\pi \in M(\ell)$ in (\ref{eq:2.5}) (adding additional non-negative terms) to include all mappings from $[\ell]$ into $[n]$ injective or not, which enables us to move the sum over $\pi$ inside the product over $u$ as follows:
\begin{align}
(*)
%\le (\mu_1 L)^{k-\ell} \left(\prod_{u \in [\ell]} d_u!\right) \sum_{\pi \in M(\ell)} \prod_{u \in [\ell]} w_{\pi(u)} \Eb{|Y_{\pi(u)}|} \nonumber \\
& \le (\mu_1 L)^{k-\ell} \left(\prod_{u \in [\ell]} d_u!\right) \sum_{\pi(1),\dots,\pi(\ell) \in [n]} \prod_{u \in [\ell]} w_{\pi(u)} \Eb{|Y_{\pi(u)}|} \nonumber \\
& =(\mu_1 L)^{k-\ell} \left(\prod_{u \in [\ell]} d_u!\right)  \prod_{u \in [\ell]} \sum_{v \in [n]} w_{v} \Eb{|Y_{v}|} \nonumber \\
%\mu_1^{k-\ell} L^{k-\ell} \left(\prod_{u \in [\ell]} d_u!\right) \prod_{u \in \ell} \sum_{\pi(u) \in [n]} w_{\pi(u)} \Eb{|Y_{\pi(u)}|} \nonumber\\
    & = (\mu_1 L)^{k-\ell} \left(\prod_{u \in [\ell]} d_u!\right) \left( \sum_{v \in [n]} w_{v} \Eb{|Y_{v}|}\right)^\ell \nonumber\\
    & = (\mu_1 L)^{k-\ell} \left(\prod_{u \in [\ell]} d_u!\right) \mu_0^\ell \label{eq:3}
.\end{align}
Combining (\ref{eq:2}) with (\ref{eq:3}) we get
\begin{align}
\Eb{f(Y)^k} &\le \sum_{\ell=1}^{k/2} \frac{1}{\ell!} \sum_{\bar u \in {\cal S}_2(\ell)}  (\mu_1 L)^{k-\ell} \left(\prod_{u \in [\ell]} d_u!\right) \mu_0^\ell \nonumber\\
& = \sum_{\ell=1}^{k/2} \frac{1}{\ell!}(\mu_1 L)^{k-\ell} \mu_0^\ell \sum_{d_1,\dots,d_\ell \ge 2 : d_1+\dots+d_\ell =k} \underbrace{\sum_{\bar u \in [\ell]^k : d_1(\bar u) = d_1,\dots,d_\ell(\bar u) = d_\ell}   \left(\prod_{u \in [\ell]} d_u!\right)}_{(\dagger)}  \label{eq:3p5}
\end{align}
where the equality groups the sum over $\bar u$ by the value of the $d_u$. Now we claim that the equality $(\dagger)=k!$ follows easily from either an appeal to multinomial coefficients or the following direct argument. Indeed consider $k$ balls of which $d_u$ are labeled $u$ for all $u \in [\ell]$. Each of the $k!$ permutation of the balls induces a vector $\bar u$ of the labels. Every $\bar u \in [\ell]^k : d_1(\bar u) = d_1,\dots,d_\ell(\bar u) = d_\ell$ is produced by exactly $\prod_{u \in [\ell]} d_u!$ permutations of the balls, proving the claim.

Substituting $(\dagger)=k!$ into (\ref{eq:3p5}), bounding the number of different $d_1,\ldots,d_\ell\ge 1$ with $d_1+\dots+d_\ell=k$ by $2^{k}$, and bounding
$k!/\ell! \le R_0^{k} k^{k}/\ell^{\ell}\le R_0^k\left(\max_{l\ge 1}\frac{k^{\ell}}{\ell^{\ell}}\right)k^{k-\ell}\le R_1^k k^{k-\ell}$ for some constants $1<R_0<R_1$ we get
\begin{align}
\Eb{f(Y)^k}& \le \sum_{\ell=1}^{k/2} \frac{1}{\ell!}(\mu_1 L)^{k-\ell} \mu_0^\ell k! 2^k \nonumber\\
& \le (k/2) (2R_1)^k \max_{\ell\in [k/2]} k^{k-\ell} (\mu_1 L)^{k-\ell} \mu_0^\ell \nonumber\\
& = (k/2) (2R_1)^k \max_{\ell\in [k/2]} (k\mu_1 L)^{k-2\ell} (\sqrt{\mu_0 k \mu_1 L})^{2\ell} \nonumber\\
& \le \left(\max\{4R_1k\mu_1 L, 4R_1\sqrt{\mu_0 k \mu_1 L}\}\right)^k  \label{eq:4}
.\end{align}
For any $\lambda>0$ we choose $k$ so that $B \approx \lambda/e$ and apply Markov's inequality, yielding
\begin{align*}
\pr{|f(Y)| \ge \lambda} \le e^{-k} \le e^2 \max\{e^{-\lambda^2/(R L \mu_0 \mu_1)}, e^{-\lambda/(R L \mu_1)}\}
\end{align*}
after some straightforward calculations (see proof of the main Theorem in the full version of the paper) for some absolute constant $R$.

\bigskip

We now sketch the differences between the above linear case and the general case that is proven in the main body of this paper. In the general case the sequences of vertices $v_1,\dots,v_k$ and $u_1,\dots,u_k$ become sequences of hyperedges. The sums over $u \in [\ell]$ remain sums over vertices.

The biggest conceptual difference in the $q>1$ case is that we consider the number of connected components in the sequence of hyperedges that replaces $u_1,\dots,u_k$. Counting the number of sequences of hyperedges with $c$ connected components is substantially trickier than the above bound on $(\dagger)$.

Bounding the equivalent of $(*)$ by a product of various $\mu_i$ is also substantially more involved.

\section{Proof of the Lemma \ref{claim:chernoffTight}}

Let $Y$ be Poisson distributed with $\Eb{Y}=\mu$, i.e.\ $\pr{Y=i} = e^{-\mu} \mu^i / i!$ for non-negative integers $i$.
We will first show that $Y$ satisfies (\ref{eqn:binomLB}) but with a better constant (96 instead of 100). We will then use a limiting argument to prove the lemma.

Let $\delta = \lambda / \mu \le 1$ and $\delta' = \delta + \frac{3\sqrt{3}}{\sqrt{\mu}} \le \delta+1 \le 2$. We will frequently use the facts that $0 \le \delta \le 1$ and $\frac{3\sqrt{3}}{\sqrt{\mu}} \le \delta' \le 2$ without explicit mention. Let $f(t)=\Eb{e^{tY}}$ and $g_a(t)=f(t)e^{-at}$.
We will use Theorem A.2.1 in \cite{alonspencer08} which states that
\begin{align}
\pr{Y > a - u} & \ge e^{-tu} \left[g_a(t) - e^{-\epsilon u} \left(g_a(t+\epsilon) + g_a(t-\epsilon)\right) \right] \label{eqn:A21}
\end{align}
for any $a,u,t,\epsilon \in \R$ with $u,t,\epsilon,t-\epsilon$ all positive.  We choose these parameters as follows: let $a=(1+\delta')\mu$, $u=3\sqrt{3\mu}$, $t=\ln(1+\delta')$ and $\epsilon = \frac{1}{\sqrt{\mu(1+\delta')}} \le \frac{1}{3\sqrt{3} \cdot 1}$. Note that by concavity $t=\ln(1+\delta')\ge \frac{\delta' \ln (1+2)}{2} \ge  \frac{3\sqrt{3}}{\sqrt{\mu}}\cdot \frac{\ln 3}{2} > \frac{1}{\sqrt{\mu}} \ge \epsilon$ hence $t-\epsilon$ is positive as required.

A standard calculation (e.g.\ Lemma 5.3 in \cite{mitzenmacher}) shows that $f(t')=e^{\mu(e^{t'} - 1)}$ and $g_{a}(t')=e^{\mu(e^{t'} - 1) - at'}$. Therefore
\begin{align}
\ln [g_{a}(t)] & = \mu(e^{\ln(1+\delta')} - 1) - \mu(1+\delta')\ln(1+\delta') \nonumber\\
& = \mu(\delta' - (1+\delta')\ln(1+\delta')) \nonumber \\
& \ge -\mu \delta'^2 / 2 \label{eqn:g}
\end{align}
where the inequality follows from applying Taylor's theorem to the function $h(x)=(x - (1+x)\ln(1+x))$.
We also have
\begin{align}
\ln (g_{a}(t \pm \epsilon)) & = \mu((1+\delta')e^{\pm\epsilon} - 1) - (\mu(1+\delta'))(\ln(1+\delta') \pm \epsilon) \nonumber\\
& \le \mu((1+\delta')(1\pm \epsilon+\epsilon^2) - 1) - (\mu(1+\delta'))(\ln(1+\delta') \pm \epsilon) \nonumber\\
& = \mu \delta' - \mu(1+\delta')\ln(1+\delta') + \mu (1+\delta')\epsilon^2 \nonumber\\
& = \ln [g_{a}(t)] + 1 \label{eqn:gpmeps}
,\end{align}
where the inequality follows from Taylor's theorem and the fact that $e^{\pm \epsilon} \le e^{\frac{1}{3\sqrt{3}}} \le 2$ and the last equality uses the fact that $\mu (1+\delta')\epsilon^2 = \mu (1+\delta') (1/{\sqrt{\mu (1+\delta')}})^2 = 1$.
(Inequality (\ref{eqn:gpmeps}) is shorthand for two inequalties, one (resp.\ the other) with ${}+{}$ (resp.\ ${}-{}$) substituted for $\pm$.)

Putting the pieces together we get
\begin{align}
\pr{Y > (1+\delta)\mu} &= \pr{Y > a - u} \nonumber\\
& \ge e^{-\ln(1+\delta') 3\sqrt{3\mu}} \left[g_{a}(\ln(1+\delta')) - e^{-\epsilon 3\sqrt{3\mu}} (g_{a}(\ln(1+\delta') + \epsilon) + g_{a}(\ln(1+\delta') - \epsilon))   \right] \nonumber\\
& \ge e^{-\ln(1+\delta') 3\sqrt{3\mu}} g_{a}(\ln(1+\delta'))\left[1 - e^{-\epsilon 3\sqrt{3\mu}} 2e  \right] \nonumber\\
& \ge e^{-\delta' 3\sqrt{3\mu}} g_{a}(\ln(1+\delta')) \left[1 - e^{-3} 2e \right] \nonumber\\
& \ge e^{-\delta' 3 \sqrt{3\mu}} e^{-\mu \delta'^2/2} \left[1 - e^{-3} 2e \right] \nonumber\\
& \ge e^{-\delta' 3 \sqrt{3\mu}-\mu \delta'^2/2 - 1} \label{eqn:pieces}
\end{align}
where the first inequality uses (\ref{eqn:A21}), the second inequality uses (\ref{eqn:gpmeps}), the third inequality uses $\ln(1+\delta') \le \delta'$ and $\epsilon 3\sqrt{3\mu} = \frac{1}{\sqrt{\mu(1+\delta')}} \cdot 3 \sqrt{3\mu} \ge 3 \sqrt{3/3} = 3$, and the fourth inequality uses (\ref{eqn:g}). Finally we bound
\begin{align}
\delta' 3 \sqrt{3\mu}+\mu \delta'^2/2 + 1 &= (\delta  + 3\sqrt{3/\mu}) 3\sqrt{3\mu} + \mu(\delta + 3\sqrt{3/\mu})^2/2 + 1 \nonumber\\
%&=(\delta \sqrt{\mu} + 3\sqrt{3}) 3\sqrt{3} + (\delta \sqrt{\mu} + 3\sqrt{3})^2/2 + 1 \\
&=3\sqrt{3}x + 27 + x^2/2 + 3\sqrt{3}x + 27/2 + 1 \nonumber\\
&\le x^2/2 + 6\sqrt{3}x + 42 \nonumber\\
& \le x^2 + (6\sqrt{3})^2/2 + 42 = \delta^2 \mu + 96 \label{eqn:96}
\end{align}
where $x=\delta \sqrt{\mu}$. Equations (\ref{eqn:pieces}) and (\ref{eqn:96}) imply that
\begin{align}
\pr{Y \ge \Eb{Y} + \lambda} &\ge \pr{Y > (1+\delta)\mu} \nonumber \\
&\ge e^{-96-\frac{\lambda^2}{\mu}} \label{eqn:binomLB2}
.\end{align}

To complete the proof of the lemma we use a limiting argument. Let $Z_1,Z_2,\ldots$ be random variables where $Z_n$ has binomial distribution $B(n, \mu/n)$. Straightforward calculation (e.g.\ Theorem 5.5 in \cite{mitzenmacher}) shows that $\lim_{n \to \infty} \pr{Z_n=i} = \pr{Y=i}$ for any integer $i$ (i.e.\ $Z_n$ converges in distribution to $Y$).  It follows that $\lim_{n \to \infty} \pr{Z_n \ge i} = 1 - \lim_{n \to \infty}\sum_{j=0}^{i-1} \pr{Z_n = j} = 1 - \sum_{j=0}^{i-1} \pr{Y = j} = \pr{Y \ge i}$. Consequently (choose $i=\ceil{\mu + \lambda}$) there exists $n' \ge 0$ such that $|\pr{Y \ge \mu + \lambda} - \pr{Z_{n'} \ge \mu + \lambda}| \le |e^{-96-\frac{\lambda^2}{\mu}}-e^{-100-\frac{\lambda^2}{\mu}}|$. Combining this fact with (\ref{eqn:binomLB2}) yields $\pr{Z_n \ge \mu + \lambda} \ge e^{-100-\frac{\lambda^2}{\mu}}$, i.e.\ $Z=Z_{n'}$ satisfies (\ref{eqn:binomLB}).

\end{document}